\newcommand{\wt}{\widetilde}
\newcommand{\wh}{\widehat}
\newcommand{\ol}{\overline}
\newcommand{\ul}{\underline}
\newcommand{\bkone}{{\bf 1}}
\newcommand{\hatb}{\hat b}
\newtheorem{lemma}{Lemma}[section]
\newtheorem{theorem}{Theorem}[section]
\newtheorem{proposition}{Proposition}[section]
\numberwithin{equation}{section}
\theoremstyle{plain}
\begin{document}

\begin{frontmatter}
\title{Uniform Convergence and Rate Adaptive Estimation of a Convex Function}
\runtitle{Estimating convex functions}

\begin{aug}
\author{\fnms{Xiao} \snm{Wang}}
\and
\author{\fnms{Jinglai} \snm{Shen}}
\affiliation{Purdue University and University of Maryland Baltimore County}




\end{aug}

\begin{abstract}
This paper addresses the problem of estimating a convex regression function under both the sup-norm risk and the pointwise risk using B-splines. The presence of the convex constraint complicates various issues in asymptotic analysis, particularly uniform convergence analysis.
To overcome this difficulty, we  establish the uniform Lipschitz property of optimal spline coefficients in the $\ell_\infty$-norm by exploiting piecewise linear and polyhedral theory. Based upon this property, it is shown that this estimator attains optimal rates of convergence
on the entire interval of interest over the H\"older class under both the risks.  
In addition, adaptive estimates are constructed under both the sup-norm risk and the pointwise risk when the exponent of the H\"older class is between one and two. These estimates achieve a maximal risk within a constant factor of the minimax risk over the H\"older class.
\end{abstract}

\begin{keyword}[class=AMS]
\kwd[Primary ]{62G08}
\end{keyword}

\begin{keyword}
\kwd{Adaptive estimate}
\kwd{B-splines}
\kwd{convex regression}
\kwd{minimax risk}
\end{keyword}

\end{frontmatter}

\section{Introduction}

Consider the convex regression problem of the form
%
%
\begin{equation}
y_k = f(x_k) + \sigma  \epsilon_k, ~~~~ k=1, \ldots, n,
\end{equation}
where $f:[0, 1]\rightarrow \mathbb R$ is a convex function,  the $\epsilon_i$ are independent, standard normal errors, $x_i=i/n, i=1, \ldots, n$ are the design points.  Let $${\cal C}=\Big\{f: [0, 1]\rightarrow \mathbb R \, \Big | \,  {f(y)-f(x)\over y-x}\le {f(z)-f(y)\over z-y} \mbox{~if~}x<y<z\Big\}$$ be the collection of convex functions on $[0, 1]$. The goal of this paper is to estimate $f \in \mathcal C$ and analyze the performance of the estimate under both the sup-norm risk and the pointwise risk.

The shape restricted inference finds a wide range of  applications, and receives fast growing interest in diverse areas.
Examples include reliability (survival functions, hazard functions), medicine (dose-response curve), finance (option price and delivery price), and astronomy (mass functions).
Much effort has focused on monotone estimation via the least squares approach (i.e., Brunk's estimator) \cite{barlow_72,  brunk_58, mukerjee_88, robertson_88}.
For convex or concave regression, the least squares estimator was originally proposed in \cite{hildreth_54} and its asymptotic properties have been studied by \cite{dumbgen_04, groeneboom_01, hanson_76, mammen_91}. However, the least squares estimators suffer several major deficiencies: (i) they lack smoothness; (ii) they have a non-normal asymptotic distribution \cite{groeneboom_01, wright_81} with low convergence rates (e.g., of order $n^{1/3}$ for the Brunk's estimator) regardless of the smoothness of the true function; and (iii)
 they are inconsistent at boundary and
have a non-negligible asymptotic bias \cite{woodroofe_93}.

Other estimation procedures have also been developed for shape restricted inference. For instance,
Mammen and Thomas-Agnan \cite{mammen_99} studied constrained smoothing splines, but their computation is highly complicated; see \cite{ShenWang_CDC11} for a related result via control theoretic splines.
A two-step estimator was proposed in \cite{birke_06}: it isotonizes a derivative estimator and then obtains a convex one by integrating the monotone derivative. Meyer \cite{meyer_08} developed an algorithm
for cubic monotone estimation with an extension to convex constraints
and other variants, e.g., increasing-concave constraints.  A penalized monotone B-spline estimator was treated in \cite{ShenWang_SICON11}; its asymptotic behaviors were analyzed.
Additional results include \cite{hall_01, mukerjee_88,  ramsay_88, wang_10}, just to name a few.  In spite of the above mentioned progress, many critical questions remain open in convex regression and its asymptotic analysis, especially those related to adaptive estimation over a function class. One of bottle-neck difficulties in adaptive asymptotic analysis is largely due to the lack of uniform convergence properties of an estimator when a shape constraint is imposed.

In this paper, we consider estimation of a convex function in the H\"older class.
Let $H^r_L$ denote the H\"older class
$$H^r_L := \Big\{f: |f^{(\ell)}(x_1)-f^{(\ell)}(x_2)|\le L |x_1-x_2|^{\gamma}, ~~\forall x_1, x_2\in [0,1]\Big\},$$
where $\gamma = r-\ell\in (0, 1]$.  Let ${\cal C}_H(r, L) = {\cal C} \cap H^r_L$ be the collection of functions in both ${\cal C}$ and $H^r_L$. Since a convex function on $[0, 1]$ must be Lipschitz continuous, i.e.,  $\gamma=1$ and $\ell=0$, we have $r\ge 1$ for any $f\in {\cal C}(r, L)$.
It is well known that, for a fixed $r$, there exists an estimator, depending on $r$, which achieves the optimal rate of convergence in $H_{L}^r$ \cite{stone_82}. For example, the minimax sup-norm risk on $H_L^r$ has an asymptotic order given by
 \begin{equation} \label{eqn:converge_rate}
\inf_{\hat f}\sup_{f\in H_L^r} \mathbb E\big\{ \|\hat f - f\|_\infty\big\} \asymp L^{1\over 2r+1} \sigma^{2r\over 2r+1} \Big({\log n\over n}\Big)^{r\over 2r+1},
\end{equation}
where $a\asymp b$ means that $a/b$ is bounded by two positive constants from below and above. However, the existence of an adaptive estimator  (independent of $r$) that achieves the convergence rate in (\ref{eqn:converge_rate}) uniformly over $r$ is more subtle.
 When the sup-norm risk is considered, a series of papers, e.g., \cite{bertin_04,  donoho_94, korostelev_93,  lepski_00}, have shown that the kernel estimator can be used to construct such an adaptive estimator.
On the other hand, when the pointwise risk is considered, a full adaptive procedure achieving (\ref{eqn:converge_rate})  does not exist and a logarithmic penalty term must occur \cite{brown_96, lepski_90}.
Specifically, for any $x_0\in (0, 1)$, there exists a positive constant $\pi_1$ such that
\begin{equation}
\inf_{\hat f}\sup_{f\in H_L^r} \mathbb E\big\{ (\hat f(x_0) - f(x_0))^2\big\} \ge \pi_1 L^{2\over 2r+1} \sigma^{4r\over 2r+1} \Big({\log n\over n}\Big)^{{2r\over 2r+1}}.
\end{equation}
Other approaches for pointwise adaptive  estimation are reported in \cite{lepski_97, tsybabov_98}, where a similar phenomenon occurs. For general discussions of  adaptive methods for {\em unconstrained} functions, see \cite{Nemirovski_note, tsybakov_10} and the
references therein.

When a shape constraint is imposed, it was firstly noted in \cite{kiefer_82}  that it does not improve the optimal rate of convergence.  Further, it was found in \cite{low_02}  that the extra difference order constraint completely changes the adaptive estimation problem. In particular, Low and Kang \cite{low_02} proposed a pointwise rate adaptive procedure for monotone estimation in the minimax sense with respect to a Lipschitz parameter. Unfortunately, when this procedure is applied to an interval of fixed points, it does not yield a monotone function as an estimate. An adaptive monotone estimation procedure  is given in \cite{cator_11}, which studied the least squares estimator and showed that the attained rate of the probabilistic error is uniformly over a shrinking $L_2$-neighborhood of the true function. Other related papers on adaptive convex estimation include \cite{dumbgen_04}.

The present paper proposes a B-spline estimator with an arbitrary spline degree for convex regression. The convex shape constraint of an estimator is converted into the similar constraint on spline coefficients.  In addition to its conceptual simplicity and numerical efficiency, the obtained B-spline estimator is globally convex,  smooth by choosing a suitable spline degree, and attains boundary consistency (as well as at the interior) by selecting a proper number of spline bases. The major part of the paper is devoted to adaptive asymptotic analysis of the B-spline estimator on $\mathcal C_{H}(r, L)$ under both the sup-norm and pointwise risks. Toward this end,  it is essential and critical to establish certain uniform convergence properties of the B-spline estimator.  However,  challenging issues arise  due to the presence of constraints. For example, the closed form of optimal spline solutions  does not exist in general. Instead, they are characterized by complementarity conditions  \cite{FPang03_book, ShenPang_SICON05}  that give rise to a  nonsmooth piecewise linear function of observation data.
%
%
Due to the nonsmooth and combinatorial nature of complementarity problems, a thorough understanding of complementarity conditions and the associated piecewise linear function is far from trivial.  In this paper, we exploit optimization techniques, along with adaptive asymptotic statistical tools, to tackle these problems. The major contributions of the paper are:

  1. As a key technical contribution of the paper, we establish the uniform Lipschitz property of optimal spline coefficients with respect to the $\ell_\infty$-norm via piecewise linear and polyhedral theory  (cf. Theorem~\ref{theorem:uniform_Lip}).  Unlike the conventional and generic Lipschitz property in the $\ell_2$-norm (which is trivial to show), the attained Lipschitz property  in the $\ell_\infty$-norm requires a nontrivial argument that takes full advantage of the convex shape constraint. It yields a uniform sup-norm bound on variations of spline coefficients regardless of the number of spline bases,  leading to more precise and less conservative error estimates in uniform convergence analysis. This property paves the way for  asymptotic analysis   (e.g., cf. Propositions~\ref{lem:bias}--\ref{lem:pointwise}) and construction of adaptive procedures.

  2.   By exploring the uniform Lipschitz property, we obtain the following results in adaptive asymptotic analysis:
  \begin{enumerate}
\item[(2.1)] For a fixed order $r$,  the proposed B-spline estimator achieves an optimal minimax rate of convergence on ${\cal C}_{H}(r, L)$ under both the sup-norm and pointwise risks (cf. Section~\ref{sect:optimal_rate}). This result gives rise to an optimal choice of the number of spline bases.  Unlike the widely studied least squared convex estimator, the B-spline estimator achieves optimal convergence rates on the entire interval $[0, 1]$ under both the sup-norm and pointwise risks (cf. Theorem~\ref{thm:convergence}), thus leading to uniform consistency on $[0, 1]$.
        \item[(2.2)]  Adaptive estimators are constructed under both the sup-norm and pointwise risks over ${\cal C}_{H}(r, L)$ with $r\in [1, 2]$. These estimates achieve a maximum risk within a constant factor of the minimax risk over the H\"older class (cf. Section~\ref{sect:adaptive_est}). In particular, the pointwise adaptive estimator attains convexity on the interval $[0, 1]$ as well as the minimax risk over an entire range of values of $r\in [1, 2]$ and $L$.
        \item[(2.3)] A brief discussion on variance estimation is given in Section~\ref{sect:effective_dim}.
 %
\end{enumerate}

The paper is organized as follows. Section~\ref{sect:Opm_conditions} formulates the B-spline  convex estimator and develops optimality conditions for spline coefficients. The main results of the paper are presented in Section~\ref{sect:main_results}, including the uniform Lipschitz property and its implications in adaptive asymptotic analysis. Potential extensions and future research directions are discussed in Section~\ref{sect:discussion}. 
The technical proofs of the main results are given in Sections~\ref{sect:proof_optimality_PLform}--\ref{sect:proof_sig}.


%

\section{Formulation and Optimality Conditions}
\label{sect:Opm_conditions}

Denote the
$p\,$th degree B-spline basis with knots $0 = \kappa_0 < \kappa_1
< \cdots < \kappa_{K_n} = 1$ by $\big\{ B^{[p]}_k : k = 1, \ldots, K_n + p \big\}$. For simplicity, we consider equally
spaced knots, namely, $\kappa_1=1/K_n,\kappa_2=2/K_n, \ldots,
\kappa_{K_n}=1$. The value of $K_n$ will depend upon $n$ as
discussed below. Assume that $n/K_n$ is an integer denoted by
$M_n$. We consider the following convex spline estimator:
\[
  \hat{f}^{[p]}(x)=\sum_{k=1}^{K_n+p} \hat{b}_k B^{[p]}_k(x),
\]
where the spline coefficients $\hat{b}=\{\hat{b}_k, k=1, \ldots,
K_n+p\}$  minimize
\begin{equation}\label{equ:p}
\sum_{i=1}^n\Big( y_i - \sum_{k=1}^{K_n+p}b_k
B_{k}^{[p]}(x_i)\Big)^2
\end{equation}
subject to the convex constraint $\Delta^2 b\ge 0 $, where
$\Delta$ is the backward difference operator such that $\Delta b_k = b_k-b_{k-1}$ and $\Delta^2 = \Delta\Delta$.

Let the $n\times (K_n+p)$ design matrix $X = \big[B^{[p]}_k(x_j)
\big]_{j, k}$ and  denote $\beta_n = \sum_{i=1}^n \big(
B_k^{[p]}(x_i) \big)^2$ for $k=p+1, \ldots, K_n$. Given a spline degree $p$,
$\big( \beta_n \frac{K_n}{n}  \big)$ converges to a positive constant (depending on $p$ only)
as $(n/K_n) \rightarrow \infty$. Thus
there exists a positive constant $C_{\beta,p}$ (depending on $p$ only) such that
\begin{equation} \label{eqn:beta_n}
      \beta_n \ge  C_{\beta, p} \cdot \frac{n}{K_n}, \qquad \forall \ n, K_n.
\end{equation}

Define the
positive definite matrix $\Lambda_p:= X^T X/\beta_n \in \mathbb
R^{(K_n+p)\times (K_n+p)}$ and $\bar y := X^T y/\beta_n$, where
$y=(y_1, \ldots, y_n)^T$ (we drop the subscript $p$ in $\Lambda_p$
for notational simplicity). It is easy to verify that for a given
spline degree $p$, $\Lambda$ is a $(2p+1)$-banded matrix. For
instance, when $p=1$, $\Lambda$ is tridiagonal.
%
%
The convex constraint on spline coefficients is characterized by the following polyhedral
cone
\[
\Omega := \big\{b \in \mathbb R^{K_n+p}: b_k-2b_{k+1}+b_{k+2}\ge 0, \
k=1, \ldots, K_n+p-2\big\}.
\]
%
%
%
When the knots are equally spaced, it is easy to verify that if
the B-spline coefficient vector $\hat b$ is in $\Omega$, then
$\hat f^{[p]}$ is a convex function. Formulating (\ref{equ:p}) in
matrix notation, the underlying optimization problem becomes the
following equivalent constrained quadratic program
\begin{equation} \label{equ:bm2}
  \hat b =
   \arg\min_{b\in \Omega} \, {1\over 2}\, b^T \Lambda \, b -
   b^T\bar{y}.
\end{equation}

%
We first give the characterization of optimality conditions for
$\hat b$. The conditions are represented by complementarity
conditions, which plays a crucial role in addressing analytic and
statistical properties of the estimator. We provide a short
introduction of the complementarity condition. Two vectors
$u=(u_1, \ldots, u_d)^T$ and $v=(v_1, \ldots, v_d)^T$ in $\mathbb
R^d$ are said to satisfy the {\em complementarity condition}
\cite{CPStone_book92} if $u_i \ge 0$, $v_i \ge 0$, and $u_i
\,v_i=0$ for all $i=1, \cdots, d$. This condition can be put in a
more compact vector form: $0 \, \le \, u \ \perp \ v \, \ge \, 0$,
where $u \perp v$ means that the two vectors are orthogonal, i.e.,
$u^T v = 0$.

%
%

We introduce additional notation. Let
\[
     C = \left[\begin{array}{ccccccccc}
   1 & 0 & 0  &0& \cdots & 0 & 0\\
    1 & 1 & 0 &0 & \cdots & 0 & 0\\
   1 & 1 & 1 & 0 & \cdots & 0 & 0 \\
  & \cdots & & & \cdots & &\\
   1 & 1 &  1 &1 &\cdots & 1 &0\\
   1 & 1 &  1 &1 &\cdots & 1 &1
    \end{array} \right]\in \mathbb R^{ (K_n+p) \times (K_n+p)},
\]
and let $D_2 \in \mathbb{R}^{(K_n+p-2)\times (K_n+p)}$ be the 2nd-order difference matrix such that
$ D_2 b = [\Delta^2 (b_{3}), \cdots,
\Delta^2 (b_{K_n+p})]^T$; see (\ref{eqn:D_2}) for the explicit form of $D_2$.

\begin{theorem}\label{thm:char}
Let $C_{d \bullet}$ denote the $d$th row of $C$.
The necessary and sufficient conditions for $\hat b\in \Omega$ to minimize (\ref{equ:bm2}) are
\begin{equation} \label{equ:opt-m-1}
   0 \le D_2 \hat b \, \perp \,  C_{\gamma\bullet}\, C \big( \Lambda \hat b - \bar y\big) \ge 0,
\end{equation}
\begin{equation} \label{equ:opt-m-2}
C_{(K_n+p) \bullet}\, \big( \Lambda \hat b - \bar y\big) =
C_{(K_n+p) \bullet}\, C \big( \Lambda \hat b - \bar y\big)=0,
\end{equation}
where the index set $\gamma := \{1, \ldots, K_n+p-2\}$.
\end{theorem}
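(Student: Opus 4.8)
The plan is to derive the complementarity conditions (\ref{equ:opt-m-1})--(\ref{equ:opt-m-2}) from the Karush--Kuhn--Tucker (KKT) characterization of the quadratic program (\ref{equ:bm2}) and then eliminate the Lagrange multiplier using the partial-sum matrix $C$. Since $\Lambda$ is positive definite, $q(b):=\frac{1}{2}b^T\Lambda b-b^T\bar y$ is strictly convex and coercive, so (\ref{equ:bm2}) has a unique minimizer $\hat b$ over the polyhedral cone $\Omega=\{b\in\R^{K_n+p}:D_2b\ge 0\}$ (note $D_2b\ge 0$ is exactly the defining system of $\Omega$). Because all constraints are affine, no constraint qualification is needed and, $q$ being convex, the KKT conditions are both necessary and sufficient: $\hat b$ solves (\ref{equ:bm2}) if and only if $\hat b\in\Omega$ and there is a multiplier $\lambda\in\R^{K_n+p-2}$ with
\begin{equation}\label{eq:kkt-plan}
\Lambda\hat b-\bar y=D_2^T\lambda,\qquad 0\le\lambda\ \perp\ D_2\hat b\ge 0 .
\end{equation}
(One could equivalently phrase optimality as the variational inequality $(\Lambda\hat b-\bar y)^T(b-\hat b)\ge 0$ for all $b\in\Omega$ and extract $\lambda$ by polyhedral duality / Farkas' lemma.)

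The crux is to understand how $C$ interacts with $D_2^T$. Since $C$ is the lower-triangular all-ones matrix, $(Cv)_k=\sum_{j\le k}v_j$, while $D_2^T$ acts as a zero-padded second difference, $(D_2^T\lambda)_k=\lambda_k-2\lambda_{k-1}+\lambda_{k-2}$ with the convention $\lambda_j:=0$ for $j\notin\{1,\ldots,K_n+p-2\}$. I would record, by a short telescoping computation that exploits the vanishing of the second difference of an affine sequence, the identities $(CD_2^T\lambda)_k=\lambda_k-\lambda_{k-1}$ and hence $(C^2D_2^T\lambda)_k=\lambda_k$ for every $k$ (reading $\lambda_j=0$ outside its range). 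Writing $r:=\Lambda\hat b-\bar y$, these show that if $r=D_2^T\lambda$ then $\lambda$ is forced to equal $C_{\gamma\bullet}Cr$ (the first $K_n+p-2$ entries of $C^2r$), and that $C_{(K_n+p)\bullet}r=C_{(K_n+p)\bullet}Cr=0$ because these quantities involve only the out-of-range, hence zero, components of $\lambda$; that is, (\ref{equ:opt-m-2}) holds. Conversely, $D_2$ has full row rank $K_n+p-2$ with kernel spanned by the affine sequences $(1,\ldots,1)^T$ and $(1,2,\ldots,K_n+p)^T$, so $r\in\mathrm{range}(D_2^T)=(\ker D_2)^\perp$ precisely when $\sum_j r_j=0$ and $\sum_j j\,r_j=0$; since $(C^2r)_{K_n+p}=(K_n+p+1)\sum_j r_j-\sum_j j\,r_j$, this pair of scalar conditions is exactly (\ref{equ:opt-m-2}), and when it holds the injectivity of $D_2^T$ makes $\lambda=C_{\gamma\bullet}Cr$ the unique solution of $D_2^T\lambda=r$.

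Putting the pieces together: the equation $\Lambda\hat b-\bar y=D_2^T\lambda$ is solvable in $\lambda$ exactly when (\ref{equ:opt-m-2}) holds, with the unique solution $\lambda=C_{\gamma\bullet}C(\Lambda\hat b-\bar y)$; substituting this into the sign/complementarity requirement $0\le\lambda\perp D_2\hat b\ge 0$ of (\ref{eq:kkt-plan}) reproduces (\ref{equ:opt-m-1}), which in turn encodes $D_2\hat b\ge 0$, i.e.\ $\hat b\in\Omega$. Therefore (\ref{eq:kkt-plan}) holds for some $\lambda\ge 0$ with $\hat b\in\Omega$ if and only if (\ref{equ:opt-m-1})--(\ref{equ:opt-m-2}) hold, which together with the KKT equivalence gives the theorem.

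I expect the main obstacle to be the index bookkeeping in the middle step rather than any conceptual difficulty: one must keep straight the dimension gap between $K_n+p$ and $K_n+p-2$, the offset in $D_2b=[\Delta^2b_3,\ldots,\Delta^2b_{K_n+p}]^T$ together with the zero-padding of $\lambda$, carefully verify the two boundary components of $CD_2^T\lambda$ and $C^2D_2^T\lambda$, and confirm that the two scalar equations in (\ref{equ:opt-m-2}) are precisely orthogonality to the two-dimensional kernel of $D_2$. Once these elementary but delicate identities are in place, the rest is a direct translation of the KKT conditions into the stated complementarity form.
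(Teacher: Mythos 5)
Your argument is correct: the KKT conditions for the convex QP with affine constraints $D_2b\ge 0$ are indeed necessary and sufficient, your telescoping identities $(CD_2^T\lambda)_k=\lambda_k-\lambda_{k-1}$ and $(C^2D_2^T\lambda)_k=\lambda_k$ (with zero-padding) are right, and the identification of (\ref{equ:opt-m-2}) with orthogonality of $\Lambda\hat b-\bar y$ to $\ker D_2=\mathrm{span}\{(1,\ldots,1)^T,(1,2,\ldots,K_n+p)^T\}$ is exactly what is needed to recover the multiplier as $\lambda=C_{\gamma\bullet}C(\Lambda\hat b-\bar y)$ and hence to pass back and forth between the KKT system and (\ref{equ:opt-m-1})--(\ref{equ:opt-m-2}). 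Your route is, however, genuinely different from the paper's. The paper works with the V-representation of $\Omega$: it exhibits explicit generators $v^1,\pm v^1$ aside, i.e.\ $\{v^1,-v^1,v^2,-v^2,v^3,\ldots,v^{K_n+p}\}$, writes any $b\in\Omega$ as a nonnegative combination whose coefficients on $v^3,\ldots,v^{K_n+p}$ are precisely the second differences $\Delta^2(b_i)$, states the conic optimality conditions directly in terms of these generators (inequalities and complementarity for the generators $v^i$, $i\ge 3$, equalities $\langle v^k,\nabla g(\hat b)\rangle=0$ for the lineality directions $k=1,2$), and then converts $\wt C\nabla g(\hat b)$ into $(C^2)_{\gamma\bullet}\nabla g(\hat b)$. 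You instead stay with the H-representation, introduce the Lagrange multiplier explicitly, and eliminate it via the left-inverse property of $C^2$ on $\mathrm{range}(D_2^T)$ together with a rank/kernel argument for solvability of $D_2^T\lambda=\Lambda\hat b-\bar y$. The two proofs are dual to one another and share the same telescoping core; yours is arguably more standard and avoids constructing the generators, while the paper's explicit generators make transparent why $D_2\hat b$ appears as the complementarity variable (it is the coefficient vector in the conic decomposition) and supply identities reused in the construction for Proposition~\ref{lem:PL_formulation}. Either way the theorem is fully established.
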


%
\subsection{Piecewise Linear Formulation of Optimal Spline Coefficients}
\label{subsect:PL_form}

It follows from Theorem~\ref{thm:char} that $\hat b(\bar y)$ is
characterized by the mixed complementarity conditions. It is known
from complementarity and polyhedral theory that $\hat b(\bar y)$
is a continuous piecewise linear function of $\bar y$ determined
by an index set $\alpha = \{ \, i \, | \, (D_2 \hat b)_i =0 \}
\subseteq \{1, \ldots, K_n+p-2 \}$ ($\alpha$ may be empty).
Indeed, $\hat b$ has $2^{(K_n+p-2)}$ linear selection
functions, each of which is denoted by $\hat b^\alpha$ corresponding to the index set $\alpha$. Hence,  the
solution mapping $\bar y \mapsto \hat b$ is a (continuous)
piecewise linear function with $2^{(K_n+p-2)}$ selection
functions.  The following proposition characterizes each linear selection function; its construction and proof is given in Section~\ref{sect:proof_PL_form}.

\begin{proposition}  \label{lem:PL_formulation}
 For each index set $\alpha \subseteq\{ 1, \ldots, K_n+p -2\}$, let $\ell:=K_n+p- |\alpha|$.
 Then  there exists a row independent matrix $F_\alpha \in \mathbb R^{\ell \times (K_n+p)}$ such that the linear selection function $\hat b^\alpha$ is given by
 \[
     \hat b^\alpha (\bar y)  \, = \, F^T_\alpha \big( \, F_\alpha \Lambda F^T_\alpha \, \big)^{-1} F_\alpha \bar y.
 \]
 %
%
\end{proposition}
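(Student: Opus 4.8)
The plan is to derive the formula for $\hat b^\alpha$ by analyzing the KKT/complementarity system (\ref{equ:opt-m-1})--(\ref{equ:opt-m-2}) restricted to the active set $\alpha$, and then recognizing that on this face the optimal coefficient vector is the solution of an equality-constrained quadratic program, whose solution has the stated projection-type form.

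First I would fix an index set $\alpha \subseteq \{1, \dots, K_n+p-2\}$ and consider the linear selection function $\hat b^\alpha$ defined on the polyhedral region where $(D_2 b)_i = 0$ for $i \in \alpha$ and the complementary slackness forces the dual variables indexed by the complement of $\alpha$ to vanish. On this region the optimality conditions reduce to: minimize ${1\over 2} b^T \Lambda b - b^T \bar y$ subject to the linear equalities $(D_2 b)_i = 0$ for $i \in \alpha$, together with the single equality (\ref{equ:opt-m-2}) coming from the last row. I would assemble these active equality constraints into a single matrix $A_\alpha b = 0$, where the rows of $A_\alpha$ consist of the rows of $D_2$ indexed by $\alpha$ and the extra row from (\ref{equ:opt-m-2}); counting rows, $A_\alpha$ has $|\alpha| + 1$ rows — but one must check the precise bookkeeping so that the nullspace has dimension exactly $\ell = K_n + p - |\alpha|$. (A cleaner route: parametrize the affine solution set of $A_\alpha b = 0$ by $b = F_\alpha^T z$ for a suitable $F_\alpha \in \mathbb R^{\ell \times (K_n+p)}$ whose rows span the relevant subspace; the construction of such $F_\alpha$ is exactly what Section~\ref{sect:proof_PL_form} promises to carry out in detail.)

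Next, substituting $b = F_\alpha^T z$ into the quadratic objective gives the unconstrained problem $\min_z {1\over 2} z^T (F_\alpha \Lambda F_\alpha^T) z - z^T F_\alpha \bar y$. Since $\Lambda = X^T X/\beta_n$ is positive definite and $F_\alpha$ has full row rank $\ell$, the matrix $F_\alpha \Lambda F_\alpha^T$ is positive definite and hence invertible; setting the gradient to zero yields $z = (F_\alpha \Lambda F_\alpha^T)^{-1} F_\alpha \bar y$, and therefore $\hat b^\alpha(\bar y) = F_\alpha^T (F_\alpha \Lambda F_\alpha^T)^{-1} F_\alpha \bar y$, which is the claimed expression; linearity in $\bar y$ is manifest. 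I would then verify that this candidate indeed satisfies the full system (\ref{equ:opt-m-1})--(\ref{equ:opt-m-2}) on the corresponding region: the equality constraints hold by construction, and the sign/orthogonality conditions on $D_2 \hat b$ and on $C_{\gamma \bullet} C(\Lambda \hat b - \bar y)$ hold on the (possibly lower-dimensional, but nonempty for the relevant $\alpha$) piece of the domain — this last point uses the standard fact from piecewise-linear complementarity theory that the solution map of the affine VI is continuous and piecewise linear with selection functions indexed precisely by these active sets.

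The main obstacle I anticipate is the construction of $F_\alpha$ with the right dimension and row independence, and showing that the active-constraint matrix $A_\alpha$ has full row rank so that $\ell = K_n + p - |\alpha|$ is correct. This is a linear-algebraic fact about the interplay between the banded second-difference operator $D_2$, the cumulative-sum matrix $C$, and the last-row condition (\ref{equ:opt-m-2}); it is precisely the content deferred to Section~\ref{sect:proof_PL_form}, and I would handle it there by an explicit description of the nullspace of $A_\alpha$ (intuitively, fixing the "free" second differences and integrating twice via $C$ recovers $b$ up to the boundary data pinned down by (\ref{equ:opt-m-2})). Everything after that — positive definiteness of $F_\alpha \Lambda F_\alpha^T$, the closed-form minimizer, and linearity — is routine.
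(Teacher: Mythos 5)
Your overall plan---identify the linear selection function for the active set $\alpha$ with the solution of the equality-constrained quadratic program obtained by imposing the active constraints, parametrize the constraint nullspace as $b=F_\alpha^T z$, and solve the reduced normal equations---is essentially the same nullspace argument that underlies the paper's proof, and your final steps (positive definiteness of $F_\alpha \Lambda F_\alpha^T$ from full row rank, the closed-form minimizer, linearity in $\bar y$) are fine. The one concrete misstep is your treatment of (\ref{equ:opt-m-2}). Those identities are conditions on the gradient $\Lambda b-\bar y$, not on $b$: they are affine in $b$ with a $\bar y$-dependent right-hand side, so they cannot appear as rows of a homogeneous matrix $A_\alpha$ with $A_\alpha b=0$; moreover (\ref{equ:opt-m-2}) consists of two equalities, not one, and if you appended them as active constraints the nullspace would have dimension $K_n+p-|\alpha|-1$ or $-2$, contradicting $\ell=K_n+p-|\alpha|$. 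The correct bookkeeping is $A_\alpha=(D_2)_{\alpha\bullet}$ alone: full row rank is immediate from the staircase structure of $D_2$, so the nullspace has dimension exactly $\ell$, and (\ref{equ:opt-m-2}) is then automatically satisfied at the reduced minimizer because the lineality directions $v^1,v^2$ of $\Omega$ satisfy $D_2 v^k=0$, hence lie in the range of $F_\alpha^T$, and stationarity of the reduced problem along these directions is precisely (\ref{equ:opt-m-2}). The paper reaches the same reduced system by a direct computation with its transformed optimality conditions: using the boundary conditions it establishes $D_2^T C_{\gamma\bullet}C(\Lambda \hat b-\bar y)=\Lambda\hat b-\bar y$, and the complementarity relation then forces $F_\alpha(\Lambda\hat b-\bar y)=0$, i.e.\ $F_\alpha\Lambda F_\alpha^T z = F_\alpha\bar y$.

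A second point: you defer the construction of $F_\alpha$ to ``Section~\ref{sect:proof_PL_form},'' which is circular since that section \emph{is} the proof being written; the deferral is harmless for the statement as given, because any basis of $\ker\big((D_2)_{\alpha\bullet}\big)$ works---the matrix $F_\alpha^T(F_\alpha\Lambda F_\alpha^T)^{-1}F_\alpha$ is invariant under left multiplication of $F_\alpha$ by an invertible matrix---so existence follows once the reduction above is fixed. Be aware, though, that the paper does not take a generic basis: it constructs $F_\alpha$ explicitly as a piecewise-linear interpolation matrix (entries $h^{-1}, 2h^{-1},\dots$ interpolating each maximal run of vanishing second differences from its breakpoints), and that specific structure (nonnegative rows with computable row sums, tridiagonal $F_\alpha F_\alpha^T$) is exactly what the uniform Lipschitz bound of Theorem~\ref{theorem:uniform_Lip} exploits later. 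So your argument, with the (\ref{equ:opt-m-2}) bookkeeping corrected, proves the proposition as stated, but the explicit construction you skipped is the part of the paper's proof that carries the load downstream.
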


In view of the above proposition and its construction (cf.  Section~\ref{sect:proof_PL_form}), a linear selection function
corresponds to an index set $\alpha$ depending on $\bar y$ (or $y$ by somewhat abusing notation).
Consequently, the piecewise linear function $\hat b$ can be written as
\[
\hat b (\bar y) = 
F^T_{\alpha(y)} \big(F_{\alpha(y)} \Lambda F^T_{\alpha(y)} \big)^{-1} F_{\alpha(y)} \bar y.
\]
Let $N(x): = \big[B_1^{[p]}(x), \ldots, B_{K_n+p}^{[p]}(x)\big]^T$. For a given $y$,
%
%
the convex B-spline estimator becomes
\begin{equation}\label{equ:ker}
\hat f^{[p]}(x) = N^T(x) \hat b(\bar y) = {1\over n}\sum_{i=1}^n N^T(x) F^T_{\alpha(y)} \Big(F_{\alpha(y)} {X^TX\over n} F^T_{\alpha(y)} \Big)^{-1} F_{\alpha(y)} N(x_i) y_i.
\end{equation}
Denote the weight function in (\ref{equ:ker}) by $K_\alpha(s, t)$, i.e.,
$$K_{\alpha(y)} (s, t) = N^T(s)F^T_{\alpha(y)} \Big(F_{\alpha(y)} {X^T(t) X(t) \over n} F^T_{\alpha(y)} \Big)^{-1} F_{\alpha(y)} N(t).$$
Hence, the convex spline estimator is a kernel estimator. However, the kernel depends on the index set $\alpha$, which in turn relies on the observation $y$. Therefore, the estimator is {\em not} a linear but a piecewise linear function in $y$.


%

\section{Main Results} \label{sect:main_results}

In this section, we exploit the piecewise linear formulation of
$\hat b$ to establish the uniform Lipschitz property of $\hat b$
in the $\ell_\infty$-norm. Roughly
speaking, this property says that $\hat b(\bar y)$ is a Lipschitz
function of $\bar y$ with a uniform Lipschitz constant (with
respect to the $\ell_\infty$-norm), regardless of
$K_n$ and $\alpha$. This property is critical in establishing
uniform consistency  and developing adaptive estimators. Formally, this property is stated in the following theorem whose proof is given in Section~\ref{sect:proof_uniform_Lip}.

\begin{theorem} \label{theorem:uniform_Lip}
  Given a spline degree $p$. There exists a positive constant $c_{\infty, p}$ (dependent on $p$ only) such that
  \begin{itemize}
   \item [(1)]  for any $K_n$ and any index set $\alpha$,
   $
            \big \| F^T_\alpha (F_\alpha \Lambda F^T_\alpha)^{-1}   F_\alpha \big \|_\infty  \, \le \, c_{\infty, p};
    $
  \item [(2)]
  for any $K_n$, 
       $
          \big  \| \hat b( u) - \hat b(v) \big \|_\infty  \, \le \, c_{\infty, p} \| u - v\|_{\infty}, \ \forall \  u, v \in \mathbb R^{K_n+p}.
       $
  \end{itemize}
\end{theorem}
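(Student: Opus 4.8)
The plan is to reduce part (2) to part (1) and then prove (1) by a careful structural analysis of the matrices $F_\alpha$ arising in Proposition~\ref{lem:PL_formulation}. For (2), recall that $\hat b$ is a continuous piecewise linear function of $\bar y$ whose selection functions are exactly the $\hat b^\alpha(\bar y) = F_\alpha^T(F_\alpha \Lambda F_\alpha^T)^{-1} F_\alpha \bar y$. A continuous piecewise linear map whose every linear piece has operator norm (here the $\ell_\infty \to \ell_\infty$ induced norm) bounded by $c_{\infty,p}$ is globally Lipschitz with the same constant: along the segment $[u,v]$ one passes through finitely many polyhedral cells, on each of which $\hat b$ is affine with slope one of the $F_\alpha^T(F_\alpha \Lambda F_\alpha^T)^{-1} F_\alpha$, and summing the increments over the sub-segments and using the triangle inequality gives the bound. (One must note $\bar y \mapsto \hat b$ is positively homogeneous, so these affine pieces are in fact linear, but that is not even needed for the Lipschitz conclusion.) So everything rests on (1).

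For (1), the matrix $M_\alpha := F_\alpha^T(F_\alpha \Lambda F_\alpha^T)^{-1} F_\alpha$ is the $\Lambda$-orthogonal projector onto the row space of $F_\alpha$ — more precisely, $M_\alpha \Lambda$ is the projection onto $\mathrm{range}(F_\alpha^T)$ along $\ker(F_\alpha)$, so $M_\alpha$ depends only on the subspace $V_\alpha := \ker(F_\alpha)^\perp = \mathrm{range}(F_\alpha^T)$, not on the particular choice of $F_\alpha$. I would first make the construction from Section~\ref{sect:proof_PL_form} explicit: $\ker(F_\alpha)$ should be spanned by the "active" second-difference directions, i.e. $F_\alpha$ encodes the constraints $(D_2 b)_i = 0$ for $i \in \alpha$ together with the single equality constraint from \eqref{equ:opt-m-2}. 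The key point — and this is where the convex shape constraint does real work — is that the rows of $D_2$ corresponding to an index set $\alpha$, being consecutive or non-consecutive second differences, have a very rigid combinatorial structure: the nullspace of the active-constraint matrix consists of vectors that are \emph{affine} (linear in the knot index) on each maximal block of consecutive active indices. This lets one write down $M_\alpha$ — equivalently, the complementary projection onto $\ker(D_2|_\alpha)$ modulo the equality constraint — in closed combinatorial form, block by block, where each block is a "discrete affine interpolation" operator on an interval of knots.

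The main obstacle, and the technical heart of the argument, is bounding $\|M_\alpha\|_\infty$ uniformly in $K_n$ and $\alpha$ once this block structure is in hand. The difficulty is that naive bounds on $(F_\alpha \Lambda F_\alpha^T)^{-1}$ degrade with the number of constraints; one cannot afford any dependence on $|\alpha|$ or $K_n$. My plan is to exploit (i) the uniform banded structure and uniform spectral bounds on $\Lambda$ — from \eqref{eqn:beta_n} and the fact that $\beta_n K_n/n$ converges, $\Lambda$ has bandwidth $2p+1$ with entries and inverse-type quantities controlled by constants depending on $p$ alone — and (ii) the fact that the discrete-affine-interpolation operators on a block of length $m$ have $\ell_\infty$-operator norm bounded by an absolute constant (a convex combination / barycentric-coordinate estimate: interpolating a function linearly between two endpoint values is a contraction in sup-norm), so that the overall projector, being essentially block-diagonal in these interpolation operators composed with the $O(p)$-local coupling from $\Lambda$, inherits a bound depending only on $p$. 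Concretely I would try to show $M_\alpha = P_\alpha + R_\alpha$ where $P_\alpha$ is exactly block-diagonal with uniformly-bounded blocks and $R_\alpha$ is a correction supported near the $O(|\alpha|)$ block boundaries but with each row having $O(p)$ nonzeros of size $O_p(1)$ and geometric decay, so $\|R_\alpha\|_\infty = O_p(1)$ as well. The bookkeeping in the correction term $R_\alpha$ — tracking how the equality constraint \eqref{equ:opt-m-2} and the non-active blocks interact through the bands of $\Lambda$ — is where the argument will require the most care; it is precisely here that one uses polyhedral/piecewise-linear structure rather than generic matrix estimates, and it is the reason the $\ell_\infty$ bound (unlike the trivial $\ell_2$ bound, which is just $\|M_\alpha\|_2 \le \lambda_{\min}(\Lambda)^{-1}$) genuinely needs the convexity constraint.
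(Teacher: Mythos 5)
Your reduction of part (2) to part (1) is exactly the paper's route (continuity and piecewise linearity of $\hat b$ plus polyhedral theory), and your structural reading of $F_\alpha$ as a blockwise ``discrete affine interpolation'' operator with $\|F_\alpha^T\|_\infty = 1$ matches the construction in Section~\ref{sect:proof_PL_form}. The gap is at the heart of part (1): you correctly note that naive bounds on $(F_\alpha \Lambda F_\alpha^T)^{-1}$ are not enough, but the remedy you offer --- a decomposition $M_\alpha = P_\alpha + R_\alpha$ with $P_\alpha$ block-diagonal and $R_\alpha$ a geometrically decaying correction with $\|R_\alpha\|_\infty = O_p(1)$ --- is asserted, not proved, and the estimate it would need is precisely the nontrivial one. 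Geometric decay of inverses of banded matrices (Demko-type bounds) requires dimension-free two-sided spectral control, i.e.\ uniform bounds on both $\|H\|_2$ and $\|H^{-1}\|_2$ for the relevant banded matrix $H$, independent of $K_n$ and $\alpha$. For $H = F_\alpha \Lambda F_\alpha^T$ this fails: its entries and its largest eigenvalue grow linearly in the block lengths $h^\alpha_{k,j}$, which can be of order $K_n$, so ``uniform spectral bounds on $\Lambda$ plus bandedness'' does not apply to it directly, and the claimed row-wise $O_p(1)$ bound on $R_\alpha$ has no justification as written.

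The missing idea in the paper is the diagonal row rescaling: with $\eta_i$ the $i$th row sum of $F_\alpha$ and $\Xi_\alpha = \mbox{diag}(\eta_1^{-1},\ldots,\eta_\ell^{-1})$, one first shows that $F_\alpha F_\alpha^T$ is tridiagonal, nonnegative and strictly diagonally dominant with row sums exactly $\eta_i$ (Lemmas~\ref{lem:F*Ftranspose} and \ref{lem:row_sum}), whence all eigenvalues of $\Xi_\alpha F_\alpha F_\alpha^T$ lie in $[1/3,1]$ uniformly in $K_n$ and $\alpha$ (Propositions~\ref{prop:F_eign} and \ref{prop:F_eign_bound}). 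Combined with the uniform spectral bounds on $\Lambda$, this yields dimension-free two-sided bounds for the banded matrix $\Xi_\alpha F_\alpha \Lambda F_\alpha^T$; Demko's theorem then bounds $\|(\Xi_\alpha F_\alpha \Lambda F_\alpha^T)^{-1}\|_\infty$ by a constant depending only on $p$, and the factorization $F_\alpha^T (F_\alpha \Lambda F_\alpha^T)^{-1} F_\alpha = F_\alpha^T (\Xi_\alpha F_\alpha \Lambda F_\alpha^T)^{-1} (\Xi_\alpha F_\alpha)$ together with $\|F_\alpha^T\|_\infty = \|\Xi_\alpha F_\alpha\|_\infty = 1$ completes the argument (Proposition~\ref{prop:Linf_norm}). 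Your proposal has the raw ingredients (bandedness, interpolation contractivity, decay of inverses) but not the normalization and the uniform conditioning estimate for the normalized matrix; without those, the key step of part (1) does not go through.
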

  %
%

In the next, we apply the uniform Lipschitz property  to derive optimal rates of convergence in Section~\ref{sect:optimal_rate}, construct adaptive estimators under both sup-norm risk and pointwise risk in Section~\ref{sect:adaptive_est}, and study variance estimation in Section~\ref{sect:effective_dim}.

%
\subsection{Optimal Rate of Convergence} \label{sect:optimal_rate}

For any $f\in {\cal C}_H(r, L)$ with $r>1$, we write $\hat f_{(r)} := \hat f^{[p]}$ when using the spline degree $p= \lceil r-1\rceil$ to fit the data. If $r=1$, then $\hat f_{(r)}: = \hat f^{[p]}$ with $p=1$, namely, $\hat f_{(r)}$ is a piecewise linear spline. In the following, for a function $g:[0, 1] \rightarrow \mathbb R$, let $\| g\|_\infty:=\sup_{t\in[0, 1]} | g(t) |$.

\begin{theorem} \label{thm:convergence}
Assume $f\in {\cal C}_H(r, L)$. Then,
\begin{itemize}
\item[(1)] If $K_n$ is chosen as
$$K_n = \Big({L\over \sigma}\Big)^{2\over 2r+1} \Big({n\over \log n}\Big)^{1\over 2r+1},$$
then there exists a positive constant $\tilde C_{1r}$ dependent only on $r$ such that
\begin{equation}
\sup_{f\in {\cal C}_H(r, L)} \mathbb E\Big( \|\hat f_{(r)} - f\|_\infty  \Big) \le \tilde C_{1r} L^{1\over 2r+1} \sigma^{2r\over 2r+1}\Big({\log n\over  n}\Big)^{r\over 2r+1}.
\end{equation}
\item[(2)] For any $x_0\in [0, 1]$, if $K_n$ is chosen as
$$K_n = \Big({L\over \sigma}\Big)^{2\over 2r+1} n^{1\over 2r+1},$$
then there exists a positive constant $\tilde C_{2r}$ dependent only on $r$ such that
\begin{equation}
\sup_{f\in {\cal C}_H(r, L)} \mathbb E\Big( |\hat f_{(r)}(x_0) - f(x_0)|^2  \Big) \le \tilde C_{2r} L^{2\over 2r+1} \sigma^{4r\over 2r+1}n^{2r\over 2r+1}.
\end{equation}
\end{itemize}
\end{theorem}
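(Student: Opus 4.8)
The plan is to use the standard bias--variance decomposition $\mathbb{E}\|\hat f_{(r)} - f\|_\infty \le \|\mathbb{E}\hat f_{(r)} - f\|_\infty + \mathbb{E}\|\hat f_{(r)} - \mathbb{E}\hat f_{(r)}\|_\infty$ (and analogously at a point $x_0$), and to control the two terms separately, invoking the uniform Lipschitz property of Theorem \ref{theorem:uniform_Lip} as the essential tool for the stochastic term. Concretely, I would first record that $\hat f_{(r)}(x) = N^T(x)\hat b(\bar y)$, so that for fixed design the deterministic part of $\bar y$ is $X^T X b^*/\beta_n = \Lambda b^*$ for the ``population'' coefficient vector $b^*$ (the best spline approximant to $f$ in the relevant sense), while the stochastic part of $\bar y$ is $\bar\epsilon := \sigma X^T \epsilon/\beta_n$, a Gaussian vector whose coordinates are subgaussian with variance of order $\sigma^2 K_n/n$ by \eqref{eqn:beta_n}. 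Since $\hat b$ is piecewise linear, I would write $\hat b(\bar y) = \hat b(\Lambda b^* + \bar\epsilon)$ and peel the randomness off using Theorem \ref{theorem:uniform_Lip}(2): $\|\hat b(\Lambda b^*+\bar\epsilon) - \hat b(\Lambda b^*)\|_\infty \le c_{\infty,p}\|\bar\epsilon\|_\infty$. Then $\|\hat f_{(r)} - \mathbb{E}\hat f_{(r)}\|_\infty \le \sup_x \|N(x)\|_1 \cdot \|\hat b(\bar y) - \hat b(\Lambda b^*)\|_\infty \le c_{\infty,p}\|\bar\epsilon\|_\infty$, using the partition-of-unity property $\sum_k B_k^{[p]}(x) \equiv 1$. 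Taking expectations, $\mathbb{E}\|\bar\epsilon\|_\infty \lesssim \sigma\sqrt{(K_n/n)\log K_n} \asymp \sigma\sqrt{(K_n/n)\log n}$ by a maximal inequality over the $K_n+p$ Gaussian coordinates; for the pointwise version the crude bound $\mathbb{E}|\hat f_{(r)}(x_0) - \mathbb{E}\hat f_{(r)}(x_0)|^2 \lesssim \sigma^2 K_n/n$ suffices (no log), which is exactly why the two parts of the theorem have different optimal $K_n$.

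For the bias term I would argue that $\|\mathbb{E}\hat f_{(r)} - f\|_\infty$ is controlled by the spline approximation error for functions in $H^r_L$. The point here is that $\mathbb{E}\hat f_{(r)} = N^T(\cdot)\hat b(\Lambda b^*)$ is itself a convex spline, and by a standard spline approximation result (Jackson-type estimate for B-splines of degree $p = \lceil r-1\rceil$ on a uniform mesh of width $1/K_n$) there is a convex spline $\tilde f$ with $\|\tilde f - f\|_\infty \lesssim L\, K_n^{-r}$; since convex $f$ can be approximated by convex splines without losing the rate, and since $\hat b(\Lambda b^*)$ is the constrained least-squares ``projection'' of the clean data, I would show $\|\mathbb{E}\hat f_{(r)} - f\|_\infty \lesssim L\, K_n^{-r}$, uniformly over ${\cal C}_H(r,L)$. (This may require a short argument that the constrained fit of the noiseless data does not inflate the approximation error beyond a constant — again leaning on Theorem \ref{theorem:uniform_Lip} and on the fact that $f$ restricted to the design is close to a convex spline.) Combining, $\mathbb{E}\|\hat f_{(r)} - f\|_\infty \lesssim L K_n^{-r} + \sigma\sqrt{(K_n/n)\log n}$, and balancing the two terms gives $K_n \asymp (L/\sigma)^{2/(2r+1)}(n/\log n)^{1/(2r+1)}$ and the stated rate $L^{1/(2r+1)}\sigma^{2r/(2r+1)}(\log n/n)^{r/(2r+1)}$. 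The pointwise case is identical but with $\log n$ replaced by $1$: $\mathbb{E}|\hat f_{(r)}(x_0)-f(x_0)|^2 \lesssim L^2 K_n^{-2r} + \sigma^2 K_n/n$, balanced at $K_n \asymp (L/\sigma)^{2/(2r+1)} n^{1/(2r+1)}$.

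The main obstacle I anticipate is the bias estimate: unlike in unconstrained spline regression, $\mathbb{E}\hat f_{(r)}$ is not a linear functional of $f$ (the index set $\alpha(y)$ depends on the data, and $\mathbb{E}\hat b(\bar y) \ne \hat b(\mathbb{E}\bar y)$ in general), so I would work instead with $\hat b(\Lambda b^*)$, the value at the mean, and control $\|\mathbb{E}\hat f_{(r)} - N^T(\cdot)\hat b(\Lambda b^*)\|_\infty$ separately — but this difference is again $\le c_{\infty,p}\,\mathbb{E}\|\bar\epsilon\|_\infty$ by Jensen and Theorem \ref{theorem:uniform_Lip}, so it gets absorbed into the stochastic term and the whole scheme closes. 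A secondary technical point is verifying the Jackson estimate in a form that respects the convex constraint and the uniform-in-$K_n$ design normalization $\Lambda \succeq cI$ on the relevant subspace; these are standard but need to be stated carefully so that all constants depend only on $r$ (equivalently $p$), as claimed. The edge case $r=1$, where $p=1$ and $\hat f_{(r)}$ is piecewise linear, is handled by the same decomposition with the degree-$1$ approximation rate $L K_n^{-1}$.
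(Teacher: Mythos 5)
For part (1) your outline is essentially the paper's own proof. The paper also centers the analysis at the constrained fit of the noiseless data, $\bar f_{(r)}(x)=N^T(x)\,\hat b(\mathbb E(\bar y))$, rather than at $\mathbb E\hat f_{(r)}$ (your closing remark that one should work with ``the value at the mean'' is exactly what is done, and it removes the need for the Jensen step entirely); the bias is handled as you suggest, by comparing with the piecewise linear convex interpolant at the knots, whose constrained fit is exact, and invoking Theorem~\ref{theorem:uniform_Lip} to show that the constrained fit of the noiseless data inflates the interpolation error only by a constant factor (Proposition~\ref{lem:bias}); the stochastic term is peeled off exactly as you propose, $\|\hat f_{(r)}-\bar f_{(r)}\|_\infty\le c_{\infty,p}\,\|\bar y-\mathbb E(\bar y)\|_\infty$ followed by a Gaussian tail/maximal bound over the $K_n+p$ coordinates (Proposition~\ref{lem:random}). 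The only blemish is cosmetic: $\mathbb E(\bar y)=X^T\vec f/\beta_n$, which equals $\Lambda b^*$ only when $f$ is itself a spline, but the argument never needs that identification.

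The genuine gap is in part (2). You call $\mathbb E\,|\hat f_{(r)}(x_0)-\bar f_{(r)}(x_0)|^2\lesssim \sigma^2K_n/n$ ``the crude bound,'' but it does not follow from anything you have set up. The only control your scheme provides at a point is the same $\ell_\infty$ peeling, $|\hat f_{(r)}(x_0)-\bar f_{(r)}(x_0)|\le c_{\infty,p}\|\bar\epsilon\|_\infty$, and $\mathbb E\|\bar\epsilon\|_\infty^2$ is of order $\sigma^2(K_n/n)\log n$: the logarithm is genuinely there, and carrying it through the bias--variance balance yields only the $(\log n/n)^{2r/(2r+1)}$ rate, not the claimed $n^{-2r/(2r+1)}$. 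Nor can you remove it by a union bound over the linear pieces of $\hat b$, since the active set $\alpha(y)$ is data dependent and there are $2^{K_n+p-2}$ pieces; and because of the convexity constraint the estimator at $x_0$ is not a local linear smoother depending on $O(p)$ coordinates of the noise, so the usual ``no log at a fixed point'' shortcut is unavailable. This is precisely why the paper proves a separate result, Proposition~\ref{lem:pointwise}: it writes the increment of the piecewise linear map $\hat b\circ X^T$ along the segment from the noiseless to the noisy data as $\big(\sum_i\mu_i G_{\alpha_i}\big)X^T\sigma\epsilon$ using the conic subdivision, conditions on the tuple $(\mu_i,G_{\alpha_i})$, and bounds the conditional second moment uniformly by means of the key estimate $\max_\alpha\|G_\alpha N(x_0)\|_2^2\le p\,c^2_{\infty,p}$ (which combines the uniform $\ell_\infty$ bound of Theorem~\ref{theorem:uniform_Lip} with the symmetry of $G_\alpha$ and the fact that $N(x_0)$ has $O(p)$ nonnegative entries summing to one), giving $\mathbb E\,|\hat f_{(r)}(x_0)-\bar f_{(r)}(x_0)|^2\le C_{3r}\sigma^2K_n/n$ with no logarithm. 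Without this (or an equivalent) argument, your part (2) does not close; the rest of your outline, including the choice of $K_n$ in each part, is consistent with the paper.
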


It is known that the maximum likelihood estimate of a convex function is inconsistent at the boundary, which is called the {\it spiking problem} \cite{woodroofe_93}. In contrast, Theorem \ref{thm:convergence} shows that $\hat f_{(r)}$ is uniformly consistent on $[0, 1]$. The optimal choice of $K_n$ is of order $(n/\log n)^{1 \over (2r+1)}$ and $\|\hat f_{(r)} - f\|_\infty$ achieves the optimal rate of convergence, which is of order $(n/\log n)^{r \over (2r+1)}$ \cite{Nemirovski_note}. Under the pointwise risk, the optimal choice of $K_n$ is of order $n^{1 \over (2r+1)}$ and the estimator thus achieves the optimal rate of convergence, which is of order $n^{2r \over (2r+1)}$ \cite{stone_82}.

The next result shows that, for any $f\in {\cal C}_H(r, L)$ with $r>2$, the constrained spline estimator and the unconstrained spline estimator coincide with probability tending to one, provided that $f''(x)>0$ for all $x\in [0, 1]$.

\begin{theorem}\label{thm:uncon}
Assume $f\in {\cal C}_H(r, L)$ with $r>2$ and $f''(x)\ge c>0$ for all $x\in [0,1]$. Let $\hat f^{uc}$ be the unconstrained regression spline estimator. If $n^{-1}K_n^5 \log n\rightarrow 0$ and $K_n\rightarrow\infty$ as $n\rightarrow\infty$, then,
\[
P\big( \, \hat f^{uc}(x) = \hat f(x), ~\forall~ x\in [0, 1] \big) \longrightarrow 1.
\]
\end{theorem}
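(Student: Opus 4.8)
The plan is to show that, with probability tending to one, the unconstrained regression spline estimator $\hat f^{uc}$ already satisfies the strict convexity constraint $\Delta^2 \hat b^{uc} > 0$ on all spline coefficients, so that the convex constraint in (\ref{equ:bm2}) is inactive and the constrained and unconstrained solutions must coincide. The key observation is that $\hat b^{uc} = \Lambda^{-1}\bar y$ and its population counterpart $b^*$ (the optimal spline coefficient vector for the noiseless signal $f$) satisfy $\mathbb E[\hat b^{uc}] = b^*_{\text{bias-free}} \pm O(\text{approximation error})$; more precisely, $\hat b^{uc} - b^* = \Lambda^{-1}(\bar y - \Lambda b^*)$ decomposes into a stochastic term and a spline-approximation term. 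Since $f'' \ge c > 0$ on $[0,1]$ and $f \in H^r_L$ with $r > 2$, standard spline theory gives that the second divided differences of $b^*$ scaled appropriately, i.e. $K_n^2 \,\Delta^2 b^*_k$, are uniformly bounded below by a positive constant proportional to $c$ (the B-spline coefficients inherit the sign and nondegeneracy of $f''$ up to an error of order $K_n^{-\gamma}$ with $\gamma = r - 2 > 0$ when $r<3$, or $K_n^{-1}$ otherwise). Hence it suffices to show $\|\hat b^{uc} - b^*\|_\infty = o(K_n^{-2})$ with probability tending to one.

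The main steps, in order, are: (i) write $\hat b^{uc} - b^* = \Lambda^{-1} X^T (\sigma\epsilon)/\beta_n + \Lambda^{-1}(X^T X b^* /\beta_n - \bar y_{\text{signal}})$, separating the noise part from the deterministic spline approximation bias; (ii) bound the deterministic part: by the uniform Lipschitz / banded structure of $\Lambda$ (the inverse of a $(2p+1)$-banded positive definite matrix with condition number bounded independently of $K_n$, exactly the kind of bound underlying Theorem~\ref{theorem:uniform_Lip}(1)), together with the fact that the best spline approximation to $f \in H^r_L$ has sup-norm error $O(L K_n^{-r})$, conclude the deterministic part is $O(L K_n^{-r}) = o(K_n^{-2})$ since $r > 2$; (iii) bound the noise part in $\ell_\infty$: each coordinate of $\Lambda^{-1} X^T \epsilon/\beta_n$ is a Gaussian with variance of order $(\sigma^2/\beta_n) \cdot \|(\Lambda^{-1})_{k\bullet}\|^2 \cdot (\text{local column norm}) = O(\sigma^2 K_n/n)$ using $\beta_n \gtrsim n/K_n$ and the bounded $\ell_1$-norm of the rows of $\Lambda^{-1}$; a union bound over $K_n + p$ coordinates then gives $\|\text{noise part}\|_\infty = O_P(\sqrt{\sigma^2 K_n \log K_n / n})$; (iv) combine: we need $\sqrt{K_n \log K_n / n} = o(K_n^{-2})$, i.e. $K_n^5 \log K_n / n \to 0$, which is exactly the stated hypothesis $n^{-1} K_n^5 \log n \to 0$ (with $\log K_n \le \log n$); (v) conclude that $\Delta^2 \hat b^{uc}_k = \Delta^2 b^*_k + \Delta^2(\hat b^{uc} - b^*)_k \ge c' K_n^{-2} - 2\|\hat b^{uc} - b^*\|_\infty > 0$ for all $k$ on the event of probability tending to one, so $\hat b^{uc} \in \Omega$ (in fact in its interior) and hence $\hat b^{uc} = \hat b$ by uniqueness of the minimizer of the strictly convex program (\ref{equ:bm2}); this forces $\hat f^{uc}(x) = \hat f(x)$ for all $x \in [0,1]$.

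The main obstacle I expect is step (ii)–(iii): getting a \emph{sharp} lower bound on $K_n^2 \Delta^2 b^*_k$ that is uniform in $k$ and does not degrade at the boundary knots — the B-spline coefficient sequence near the endpoints behaves differently, and one must verify that the quasi-interpolant or de Boor–Fix functional reproducing $f$ still has second differences bounded below there (this is where $f'' \ge c$ on the \emph{closed} interval $[0,1]$, not just the interior, is used). A secondary technical point is controlling $\|\Lambda^{-1}\|_\infty$ (and the mixed norm $\|\Lambda^{-1} X^T\|$) uniformly in $K_n$; this follows from exponential off-diagonal decay of the inverse of a banded, uniformly well-conditioned matrix (Demko-type estimates), and is morally the scalar analogue of Theorem~\ref{theorem:uniform_Lip}(1) applied with the trivial full index set $\alpha = \emptyset$. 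Once these two bounds are in hand the rest is bookkeeping with the rate condition $n^{-1}K_n^5\log n \to 0$.
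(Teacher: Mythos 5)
Your proposal is correct in outline but takes a genuinely different route from the paper. The paper argues at the level of the function's second derivative: it imports the bias expansion and the variance order $n^{-1}K_n^{5}$ for $\tfrac{d^2}{dx^2}\hat f^{uc}$ from Zhou and Wolfe \cite{zhou_00}, upgrades this to the sup-norm bound $\big\|\tfrac{d^2}{dx^2}\hat f^{uc}-\mathbb E\tfrac{d^2}{dx^2}\hat f^{uc}\big\|_\infty=O_p\big(\sqrt{n^{-1}K_n^5\log n}\big)$ by the same maximal-inequality device as Lemma~\ref{lem:random-2}, and concludes $\|\tfrac{d^2}{dx^2}\hat f^{uc}-f''\|_\infty=o_p(1)$, hence positivity of the second derivative and coincidence with the constrained fit. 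You instead work directly with the coefficient vector $\hat b^{uc}=\Lambda^{-1}\bar y$: Demko-type control of the inverse of the banded, uniformly well-conditioned $\Lambda$ (morally the $\alpha$-trivial case of Theorem~\ref{theorem:uniform_Lip}(1), via \cite{Demko_siam77}) bounds the deterministic part by $O(LK_n^{-r})=o(K_n^{-2})$ and, with a Gaussian union bound, gives the noise part $O_P\big(\sqrt{K_n\log n/n}\big)$, and strict feasibility $\Delta^2\hat b^{uc}>0$ then follows from the lower bound $K_n^2\Delta^2 b^*_k\ge c'>0$; both routes consume exactly the same rate condition $n^{-1}K_n^5\log n\to 0$. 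What your route buys: the constraint in (\ref{equ:bm2}) is imposed on the coefficients, not on the function, and for $p\ge 3$ positivity of $\tfrac{d^2}{dx^2}\hat f^{uc}$ does not by itself yield $\Delta^2\hat b^{uc}\ge 0$ (the paper leaves that passage implicit), whereas your argument establishes membership of $\hat b^{uc}$ in $\Omega$ directly and hence exact coincidence of the two estimators by uniqueness of the minimizer of the strictly convex program. What it costs: step (ii) of your plan needs the uniform lower bound on $K_n^2\Delta^2 b^*_k$, including the boundary blocks of the B-spline coefficient sequence; this is the point you yourself flagged, and it should be written out (it follows in a standard way from de Boor--Fix dual functionals together with simultaneous approximation of $f''$, which is H\"older of exponent $r-2>0$, so the coefficients of the approximant's second derivative stay within $o(1)$ of values of $f''\ge c$) rather than merely asserted.
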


\begin{proof}
Zhou et al. \cite{zhou_00} studied the problem of estimating derivatives of a regression function using the corresponding derivatives of regression splines without shape constraint. For any $x\in [\kappa_k, \kappa_{k+1}]$, $k=0, \ldots, K_n-1$, if $\ell \ge 3$,
$$\mathbb E\Big( {d^2\over dx^2} \hat f^{uc}(x) \Big) -  f''(x) = b(x) + o(K_n^{-r+2}),$$
where
$$b(x) = {K_n^{-(\ell-2)} \over (\ell-1)!} \Big(f^{(\ell)}(\kappa_{k+1})-f^{(\ell)}(\kappa_{k}) \Big) B_{\ell-1}\big( K_n( x - \kappa_k)\big)$$
is of order $O(K_n^{-r+2})$, and $B_m(\cdot)$ is the $m$th Bernoulli polynomial inductively defined as follows:
$$B_0(x)=1, ~~~~~B_i(x) = \int_0^x i B_{i-1}(z)dz+b_i,$$
where $b_i = -i\int_0^1\int_0^xB_{i-1}(z)dzdx$ is the $i$th Bernoulli number.
The variance of ${d^2\over dx^2} \hat f^{uc}(x)$ is of order $n^{-1}K_n^{5}$ (cf.  \cite[Lemma 5.4]{zhou_00}). Similar to Lemma~\ref{lem:random-2} given in Section \ref{sect:proof_thm:sup-norm}, it can be shown that
 $$\Big \|{d^2\over dx^2} \hat f^{uc}(x) - {d^2\over dx^2} \mathbb E(\hat f^{uc}(x)) \Big \|_\infty = O_p\Big( \sqrt{n^{-1}K_n^5 \log n} \Big).$$
 Therefore, if $n^{-1}K_n^5 \log n\rightarrow 0$ and $K_n\rightarrow\infty$ as $n\rightarrow\infty$, $\|{d^2\over dx^2} \hat f^{uc} - f''\|_\infty=o_p(1)$.  Hence, the unconstrained and constrained estimators are asymptotically equivalent.
\end{proof}

%


\subsection{Adaptive Estimation} \label{sect:adaptive_est}

In this section, we construct adaptive estimators, with respect to both the sup-norm risk and thepointwise risk . These estimates have maximum risks within a constant factor of the minimax risk over ${\cal C}_H(r, L)$. We will focus on the function class ${\cal C}_H(r, L)$ with $1\le r\le 2$, where the differences between the constrained and unconstrained estimate do no vanish asymptotically.

%
\subsubsection{Adaptive Estimation under Sup-norm Risks}

It follows from Propositions \ref{lem:bias} and \ref{lem:random} that, for any $r\in [1, 2]$, the bounds for the bias and the stochastic term, respectively, are
\begin{eqnarray}
\sup_{f\in {\cal C}_H(r, L)} \|\bar f_{(r)} -  f\|_\infty &\le & C_1 L K_n^{-r},   \label{equ:bias2}\\
P\Big( \|\hat f_{(r)} - \bar f_{(r)}\|_\infty \ge u \Big) &\le & (K_n+1)\exp\Big\{ -{n\over 2 K_n C_{2}^2\sigma^2}u^2 \Big\},\label{equ:random2}
\end{eqnarray}
where $C_1$ and $C_2$ are two positive constants independent of $r \in[1,2]$. Hence  the optimal number of knots is
\begin{equation} \label{eqn:K_r}
K_{(r)} = \Bigg({C_2\over C_1 r\sqrt{2(2r+1)}}\Bigg)^{-{2\over 2r+1}} \Big({\sigma\over L} \Big)^{-2 \over 2r+1} \Big({\log n\over n}\Big)^{-1 \over 2r+1},
\end{equation}
and the optimal rate of convergence is
\begin{eqnarray}
\psi_{(r)} & = &  C_1 L K_{(r)}^{-r} + \sqrt{2\over 2r+1}~C_2\sigma\sqrt{\frac{ K_{(r)} \log n}{n} } \label{eqn:psi} \\
&=&  2 C_1^{1\over 2r+1} \Bigg({C_2\over r\sqrt{2(2r+1)}}\Bigg)^{{2r\over 2r+1}} L^{1\over 2r+1}\sigma^{2r\over 2r+1 } \Big({\log n\over n}\Big)^{r \over 2r+1 }. \nonumber
\end{eqnarray}

Given $n$, let $\tau_n :=\lceil  (\log n)^{1/2} \rceil$,
%
%
and $r_j := 1+j/\tau_n$, $j=0, 1, \ldots, \tau_n$ be the elements in $[1, 2]$. We consider the adaptive estimator using the idea of Lepski \cite{lepski_92}. Let
$$\hat k = \sup\Big\{0\le k\le \tau_n: \|\hat f_{(r_k)} - \hat f_{(r_j)}\|_\infty \le {1+\sqrt{2} \over 2}~\psi_{(r_j)} , \mbox{~ for any}~~j\le k\Big\}.$$
Define $\hat r := r_{\hat k}$. We use $\hat f_{(\hat r)}$ for estimation in the sup-norm distance.

\begin{theorem}\label{thm:sup-norm}
The estimator $\hat f_{(\hat r)}$ is a rate adaptive estimator on ${\cal C}_H(r, L)$ for the sup-norm distance, i.e., there exists a positive constant  $\pi_2$ such that
$$\sup_{r\in [1,2]}\sup_{f\in {\cal C}_H(r, L)} \mathbb E \big\{\|\hat f_{(\hat r)} - f\|_\infty\big\} \le \pi_2~L^{1\over 2r+1 }\sigma^{2r\over 2r+1}\Big({\log n\over n}\Big)^{r\over 2r+1}.$$
\end{theorem}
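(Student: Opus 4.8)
The plan is to carry out a standard Lepski-type oracle argument, but with the key simplification that the uniform Lipschitz property of Theorem~\ref{theorem:uniform_Lip} makes the tail bound~(\ref{equ:random2}) hold with constants $C_1, C_2$ \emph{independent of} $r\in[1,2]$. First I would fix $r\in[1,2]$ and $f\in{\cal C}_H(r,L)$, and let $r^*$ be the largest grid point $r_j$ with $r_j\le r$; since the grid spacing is $1/\tau_n$ with $\tau_n\asymp(\log n)^{1/2}$, and since ${\cal C}_H(r,L)\subseteq{\cal C}_H(r_j,L)$ for $r_j\le r$ (a smaller H\"older exponent with the same constant is a weaker constraint, after adjusting $L$ by a bounded factor absorbing $\|f\|$ on $[0,1]$), the bias/variance bounds~(\ref{equ:bias2})--(\ref{equ:random2}) apply at the index $r^*$. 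The oracle choice $r^*$ satisfies $\psi_{(r^*)}\asymp\psi_{(r)}$ because $\psi_{(\cdot)}$ is continuous in its argument and $|r-r^*|\le 1/\tau_n\to 0$, and because the power $\Big(\tfrac{\log n}{n}\Big)^{r/(2r+1)}$ changes by only a factor $1+o(1)$ when $r$ moves by $O(1/\sqrt{\log n})$ (the exponent shift $\tfrac{r}{2r+1}-\tfrac{r^*}{2r^*+1}$ is $O(1/\sqrt{\log n})$, and $n^{O(1/\sqrt{\log n})}$ stays bounded).

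Next I would split the analysis into the ``good event'' and its complement. Define $A_n:=\bigcap_{j}\big\{\|\hat f_{(r_j)}-\bar f_{(r_j)}\|_\infty\le \tfrac{1}{2}\psi_{(r_j)}\big\}$. On $A_n$, for any $j\le k$ with $r_k\le r^*$ one has, by the triangle inequality and~(\ref{equ:bias2}), $\|\hat f_{(r_k)}-\hat f_{(r_j)}\|_\infty\le\|\hat f_{(r_k)}-\bar f_{(r_k)}\|_\infty+\|\bar f_{(r_k)}-f\|_\infty+\|f-\bar f_{(r_j)}\|_\infty+\|\bar f_{(r_j)}-\hat f_{(r_j)}\|_\infty\le\tfrac{1}{2}\psi_{(r_k)}+C_1LK_{(r_k)}^{-r}+C_1LK_{(r_j)}^{-r}+\tfrac{1}{2}\psi_{(r_j)}$, and since $K_{(r)}$ is increasing in $r$ while $r\ge r_j$, the bias terms are dominated by $\tfrac{1}{2}\psi_{(r_j)}$ each (this is exactly how the constant $\tfrac{1+\sqrt 2}{2}$ in the definition of $\hat k$ is calibrated: $\psi_{(r)}=C_1LK_{(r)}^{-r}+\sqrt{2/(2r+1)}\,C_2\sigma\sqrt{K_{(r)}\log n/n}$ and the first summand is $\tfrac{1}{\sqrt 2+1}\psi_{(r)}$ of the total by the optimality of $K_{(r)}$). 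Hence $r^*\le r_{\hat k}$ on $A_n$, i.e. $\hat r\ge r^*$; and also the ``acceptance'' inequality at level $\hat k$ gives $\|\hat f_{(\hat r)}-\hat f_{(r^*)}\|_\infty\le\tfrac{1+\sqrt 2}{2}\psi_{(r^*)}$, so that $\|\hat f_{(\hat r)}-f\|_\infty\le\|\hat f_{(\hat r)}-\hat f_{(r^*)}\|_\infty+\|\hat f_{(r^*)}-\bar f_{(r^*)}\|_\infty+\|\bar f_{(r^*)}-f\|_\infty\lesssim\psi_{(r^*)}\lesssim\psi_{(r)}$, which is the desired bound on the event $A_n$.

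Then I would control the complementary event. By~(\ref{equ:random2}) with $u=\tfrac{1}{2}\psi_{(r_j)}$ and the explicit form of $K_{(r_j)}$ and $\psi_{(r_j)}$, each probability $P\big(\|\hat f_{(r_j)}-\bar f_{(r_j)}\|_\infty\ge\tfrac{1}{2}\psi_{(r_j)}\big)$ is bounded by $(K_{(r_j)}+1)\exp\{-c\log n\}\le n^{-c'}$ for some $c'$ that can be made larger than any fixed power by the calibration of $\psi$; summing over the $\tau_n+1=O(\sqrt{\log n})$ grid points gives $P(A_n^c)\le\tau_n\cdot n^{-c'}=o(n^{-1})$, say. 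On $A_n^c$ I would bound $\mathbb E\big[\|\hat f_{(\hat r)}-f\|_\infty\mathbf{1}_{A_n^c}\big]$ by Cauchy--Schwarz: $\|\hat f_{(\hat r)}-f\|_\infty\le\max_j\|\hat f_{(r_j)}-f\|_\infty$, and each $\|\hat f_{(r_j)}-f\|_\infty$ has at most polynomially growing moments (using Theorem~\ref{theorem:uniform_Lip}(2): $\hat b$ is a $c_{\infty,p}$-Lipschitz function of $\bar y$ whose entries are Gaussian with $O(1)$ variance, so $\|\hat f_{(r_j)}\|_\infty$ has sub-Gaussian-type tails with polynomial-in-$n$ scale, plus $\|f\|_\infty$ is bounded on $[0,1]$). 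Multiplying a $\mathrm{poly}(n)$ moment bound by $\sqrt{P(A_n^c)}=o(n^{-1/2})$ makes this contribution negligible compared to $\psi_{(r)}$. Taking $\sup$ over $f\in{\cal C}_H(r,L)$ and then $\sup$ over $r\in[1,2]$ (noting all constants are $r$-uniform) yields the claim with $\pi_2$ absorbing $C_1,C_2$ and the bounded exponent-shift factors.

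The main obstacle, and the place where the uniform Lipschitz property is indispensable, is ensuring that the constants $C_1,C_2$ in~(\ref{equ:bias2})--(\ref{equ:random2}) — and hence the thresholds $\tfrac{1+\sqrt2}{2}\psi_{(r_j)}$ and the oracle rate — are genuinely \emph{uniform over $r\in[1,2]$}, since the adaptive estimator cannot ``know'' $r$ and must use one set of thresholds for all $j$. Without the $\ell_\infty$-Lipschitz bound, the stochastic-term constant $C_2$ would a priori depend on $K_n$ (through the index set $\alpha(y)$ and the matrix $F_\alpha(F_\alpha\Lambda F_\alpha^T)^{-1}F_\alpha$), and the Lepski comparison would break down; Theorem~\ref{theorem:uniform_Lip}(1) is exactly what pins down $C_2$ (and the polynomial moment bound used on $A_n^c$) uniformly. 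A secondary, more routine, technical point is the continuity/monotonicity bookkeeping for $\psi_{(r)}$ and $K_{(r)}$ as functions of $r$ over the discretization, which I sketched above and which only requires that $r\mapsto r/(2r+1)$ be Lipschitz on $[1,2]$.
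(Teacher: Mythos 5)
There are two genuine quantitative gaps, both stemming from the fact that the paper's grid is coarse: $\tau_n=\lceil(\log n)^{1/2}\rceil$, so adjacent grid points are $1/\sqrt{\log n}$ apart. Your central claim that the oracle point $r^*$ (largest grid point below $r$) satisfies $\psi_{(r^*)}\asymp\psi_{(r)}$ because ``$n^{O(1/\sqrt{\log n})}$ stays bounded'' is false: $n^{c/\sqrt{\log n}}=e^{c\sqrt{\log n}}\to\infty$. Indeed $\psi_{(r^*)}/\psi_{(r)}\asymp (n/\log n)^{\frac{r-r^*}{(2r+1)(2r^*+1)}}$, which diverges (subpolynomially) when $r-r^*\asymp 1/\sqrt{\log n}$. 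Consequently your good-event bound only yields $\|\hat f_{(\hat r)}-f\|_\infty\lesssim\psi_{(r^*)}\lesssim e^{c\sqrt{\log n}}\,\psi_{(r)}$, which is not the constant-factor adaptivity asserted in the theorem. The paper's proof is organized precisely to avoid this: for overshooting it compares $\hat f_{(\hat r)}$ with $\hat f_{(r_*)}$ at the grid point just \emph{above} $r$, where $\psi_{(r_*)}\le(1+o(1))\psi_{(r)}$ holds for free since $(\log n/n)^{r_*/(2r_*+1)}\le(\log n/n)^{r/(2r+1)}$, and controls the remaining term by second moments (Lemma~\ref{lem:error-bd}, Proposition~\ref{lem:pointwise}) together with the event probabilities $\rho_f(r,r')=O(1/\log n)$ summed over the $O(\sqrt{\log n})$ grid points; for undersmoothing ($\hat r<r$) it never needs adjacent rates to be comparable, but instead shows the \emph{probability} of selecting $\hat r=d<r$ is polynomially small (Lemma~\ref{lem:est-r}) and pairs this with the truncated-expectation bound of Lemma~\ref{lem:random-2}, so that the polynomially large ratio $\psi_{(d)}/\psi_{(r)}$ is absorbed.

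The second gap is your bound $P(A_n^c)=o(n^{-1})$ ``for some $c'$ that can be made larger than any fixed power.'' The exponent in (\ref{equ:random2}) is pinned by the calibration of $\psi_{(r_j)}$ in (\ref{eqn:K_r})--(\ref{eqn:psi}) and cannot be tuned: with your threshold $u=\tfrac12\psi_{(r_j)}$ the stochastic part of $\psi_{(r_j)}$ gives an exponent of order $\log n/(2r_j+1)$, so (\ref{equ:random2}) yields only $(K_{(r_j)}+1)\,n^{-1/(2r_j+1)}\asymp(\log n)^{-1/(2r_j+1)}$, and after the union over $\tau_n\asymp\sqrt{\log n}$ grid points your $P(A_n^c)$ is not even $o(1)$, let alone $o(n^{-1})$. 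This is exactly why the paper works at the deviation levels $\tfrac{\sqrt2}{2}\psi$ and $(1+\sqrt2)\psi$ (Lemmas~\ref{lem:est-r} and~\ref{lem:error-bd}), where the exponents are just large enough to beat the factor $K_{(r')}+1$, rather than with a single half-$\psi$ good event. Two smaller slips feed into your threshold bookkeeping: $K_{(r)}$ in (\ref{eqn:K_r}) is \emph{decreasing}, not increasing, in $r$; and the bias summand of $\psi_{(r)}$ is calibrated in the paper as $C_1LK_{(r)}^{-r}=\psi_{(r)}/2$, not $\psi_{(r)}/(1+\sqrt2)$, so your accounting on the good event produces roughly $2\psi_{(r_j)}$, exceeding the acceptance threshold $\tfrac{1+\sqrt2}{2}\psi_{(r_j)}$ unless one additionally exploits that $\psi_{(r_k)}\ll\psi_{(r_j)}$ for $k>j$ on this grid. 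Your emphasis on the role of Theorem~\ref{theorem:uniform_Lip} in making $C_1,C_2$ uniform over $r\in[1,2]$ is correct and matches the paper, but the Lepski analysis itself needs to be restructured along the paper's lines to deliver the stated result.
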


%
\subsubsection{Pointwise Adaptive Estimation}

In this section, we construct an estimator which attains the minimax rate of convergence for a whole range of values of $r\in [1,2]$ and $L$. In the context of convex regression, unlike the earlier work on pointwise adaptive estimation, a fully adaptive procedure can be obtained.

We explore the idea of Low and Kang \cite{low_02} to construct an adaptive estimate of $f(x_0)$ for any given  $x_0\in (0, 1)$.
Given the observation data $(y_i)^n_{i=1}$, let
\begin{equation} \label{eqn:grouped_y_k}
\bar y_k : = {\sum_{i=1}^n y_i I(\kappa_{k-1}<x_i\le \kappa_k)\over \sum_{i=1}^n I(\kappa_{k-1}<x_i\le \kappa_k)}, ~~k=1,\ldots, K_n.
\end{equation}
Then $\hat b=(\hat b_1, \ldots, \hat b_{K_n})^T$ minimizes $\sum_{k=1}^{K_n} (b_k - \bar y_k)^2 $
%
%
subject to the convex constraint $\Delta^2 b_k\ge 0$, $k=3,\ldots, K_n$. This indicates that  a piecewise constant spline with $p=0$ is used to fit the data in (\ref{equ:p}).
Recall that $M_n=n/K_n$.
Let
$$\zeta_k := {1\over n} \Big[ (k-1)M_n + {M_n+1\over 2}\Big].$$  Hence, $\zeta_k$ is the average of the design points on $(\kappa_{k-1}, \kappa_k]$.
Let $\wt f$ denote the piecewise linear function which interpolates $(\zeta_k, \hat b_k)$, $k=1, \ldots, K_n$.


Fix $x_0\in (0, 1)$. For each $n$, let $d_n \in \mathbb N$ satisfy $\zeta_{d_n}<x_0\le \zeta_{d_n+1}$.
Let $K_{n, j} := 2^j n^{1/5}$, where $K_{n, j}$ depends on $j$. Further, we let $\bar y_{k, j}$ denote the $\bar y_k$ defined in (\ref{eqn:grouped_y_k}) corresponding to a given $K_{n, j}$,
and let $\wt f_j$ be the estimator $\wt f$ corresponding to  $K_{n, j}$.
%
%
Fix a real number $\lambda >0$ such that $P(Z>\lambda)<1/4$, where $Z$ is a standard normal random variable.
Set
\[
I_j := I\Big(\Delta\bar y_{d_n+4, j} - \Delta \bar y_{d_n-2, j}\le \lambda 2^{ {j \over 2}+1} n^{-{2 \over 5} }\sigma \Big)\prod_{i=0}^{j-1} I\Big(\Delta\bar y_{d_n+4, i} - \Delta \bar y_{d_n-2, i} > \lambda 2^{{i \over 2}+1} n^{-{2 \over 5} } \sigma\Big).
\]
Note that exactly one $I_j\neq 0$ and thus the collection $\{I_j\}$ provides a selection procedure for $K_{n, j}$.
The adaptive estimator is given by
\begin{equation}\label{equ:ada}
  \wt f(x_0) = \sum_{j=1}^\infty \wt f_j(x_0) I_j.
\end{equation}

\begin{theorem} \label{thm:pointwise}
 The estimator in (\ref{equ:ada}) is a rate adaptive estimator under the pointwise risk, i.e., for any $x_0\in (0, 1)$, there exists a positive constant  $\pi_3$  such that
\begin{equation}
\sup_{r\in [1,2]}\sup_{f\in {\cal C}_H(r, L)} \mathbb E|\wt f(x_0) - f(x_0)|^2 \le \pi_3 L^{2\over (2r+1)} \sigma^{4r\over (2r+1)} n^{-2r\over (2r+1)}.
\end{equation}
\end{theorem}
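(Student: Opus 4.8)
The plan is to follow the Lepski-type analysis of Low and Kang~\cite{low_02} adapted to the convex B-spline (here piecewise constant, $p=0$) setting, exploiting crucially the uniform Lipschitz property of Theorem~\ref{theorem:uniform_Lip} to control the interpolating estimator $\wt f_j$ and the convex-constraint monotonicity of the discrete second differences to control the selection events $I_j$. The skeleton of the argument is a standard oracle-type decomposition: condition on which index $j^*$ the true smoothness $r$ would dictate (so that $K_{n,j^*} \asymp (L/\sigma)^{2/(2r+1)} n^{1/(2r+1)}$ up to a bounded factor from the dyadic grid), and then split $\mathbb E|\wt f(x_0) - f(x_0)|^2$ into three contributions: (i) the event that the selected $j$ equals $j^*$, on which one has the clean bias-variance bound at the optimal resolution; (ii) the event $\{j < j^*\}$ of \emph{under-smoothing relative to the oracle} --- which here means \emph{stopping too early}, i.e. selecting a coarser bandwidth $K_{n,j}$ with $j<j^*$; and (iii) the event $\{j > j^*\}$ of selecting a finer resolution than the oracle.

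First I would set up the bias analysis. With $p=0$, each $\hat b_k$ is (by the optimality conditions of Theorem~\ref{thm:char}, specialized to $\Lambda = I$) the isotonic-type projection of the group means $\bar y_k$ onto the cone $\{\Delta^2 b \ge 0\}$, and $\wt f$ is the piecewise linear interpolant of $(\zeta_k,\hat b_k)$. For $f \in {\cal C}_H(r,L)$ with $r\in[1,2]$, a Taylor/convexity argument gives that the population version $\bar f_j := $ interpolant of $(\zeta_k, \mathbb E\bar y_{k,j})$ satisfies $|\bar f_j(x_0) - f(x_0)| \le C_1 L K_{n,j}^{-r}$, using that second differences of a $C^{1,\gamma}$ convex function scale like $K_n^{-r}$ and that projecting onto the convex cone does not increase the approximation error at an interior point (monotonicity/contraction of the projection in sup-norm, which is exactly where the uniform Lipschitz constant $c_{\infty,p}$ enters, so the bias constant $C_1$ is independent of $r\in[1,2]$). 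For the stochastic part, $\wt f_j(x_0) - \bar f_j(x_0)$ is a linear combination, with coefficients bounded via Theorem~\ref{theorem:uniform_Lip}(1), of the averaged noise $\bar\epsilon_{k,j}$, each of which has variance $\sigma^2/M_{n,j} = \sigma^2 K_{n,j}/n$; hence this term is sub-Gaussian with variance proxy $\lesssim \sigma^2 K_{n,j}/n$, and a union bound over the (finitely many relevant) $j$ gives tail control with the extra $\log$ absorbed — but note that because this is \emph{pointwise} estimation with a data-driven bandwidth, no $\log n$ penalty appears in the final rate: that gain is precisely what the selection rule based on the \emph{local} second-difference statistic $\Delta\bar y_{d_n+4,j}-\Delta\bar y_{d_n-2,j}$ buys us, in contrast to the sup-norm estimator.

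Next I would analyze the selection events. The statistic $T_j := \Delta\bar y_{d_n+4,j}-\Delta\bar y_{d_n-2,j}$ has mean $\mathbb E T_j$ equal to a discrete approximation of a scaled second difference of $f$ near $x_0$, which is $\ge 0$ by convexity and is of exact order $L K_{n,j}^{-r} \cdot K_{n,j} = L K_{n,j}^{-(r-1)}$ up to constants (scaling of a divided second difference), while its standard deviation is of order $2^{j/2} n^{-2/5}\sigma$; the threshold $\lambda 2^{j/2+1}n^{-2/5}\sigma$ is chosen (via $P(Z>\lambda)<1/4$) so that the event $\{T_j \le \text{threshold}\}$ has probability near one once $j$ is large enough that $\mathbb E T_j$ is dominated by the noise level (this pins down $j^*$), and small when $j$ is too small. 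Thus: on $\{\hat j = j < j^*\}$ we must have passed all the tests $T_i > \text{threshold}$ for $i<j$ but this forces $j$ not too far below $j^*$, and the squared bias $C_1^2 L^2 K_{n,j}^{-2r}$ is then controlled by noticing $K_{n,j}\ge c K_{n,j^*}$; on $\{\hat j = j > j^*\}$ we have failed test $j^*$, i.e. $\{T_{j^*} > \text{threshold}\}$, an event of small probability by a Gaussian tail bound on $T_{j^*}-\mathbb E T_{j^*}$, small enough to kill the inflated variance $\sigma^2 K_{n,j}/n$ (summing the geometric series $\sum_{j>j^*} 2^j \cdot P(\text{fail at }j^*\text{ but pass at }j-1)$, where the failure probabilities decay faster than $2^{-j}$ because the threshold grows like $2^{j/2}$). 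Combining the three pieces and optimizing $K_{n,j^*}$ (which the dyadic grid $K_{n,j}=2^j n^{1/5}$ reaches up to a constant) yields the claimed bound with $\pi_3$ depending only on $\lambda$, $C_1$, $c_{\infty,0}$, and absolute constants, uniformly over $r\in[1,2]$ and $L$.

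The main obstacle I anticipate is \textbf{showing that the convex projection does not destroy the local bias and variance control at the interior point $x_0$} --- that is, transferring the bounds from the unconstrained group means $\bar y_k$ to the constrained coefficients $\hat b_k$ and their interpolant $\wt f$. The $\ell_2$-contraction of projection onto a convex cone is immediate but useless here, since we need a \emph{pointwise} ($\ell_\infty$) statement that does not degrade with $K_n$; this is exactly the role of Theorem~\ref{theorem:uniform_Lip}, but one still has to argue that the second-difference selection statistic $T_j$, computed from the \emph{raw} group means rather than the projected coefficients, faithfully reflects the curvature of $f$ near $x_0$ and is not corrupted by the cone projection elsewhere — i.e. one needs the projection to be "local enough" near a point where $f''>0$-like behavior holds in the divided-difference sense. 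Handling the boundary cases where $x_0$ is close to $0$ or $1$, and the edge case $r=1$ (where the bias $LK_n^{-1}$ and the curvature signal $LK_n^{-(r-1)}=L$ degenerate in scaling), will require minor separate treatment but no new ideas.
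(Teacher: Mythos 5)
Your architecture is essentially the paper's (a Low--Kang dyadic selection rule, bias via convexity, stochastic error via the uniform Lipschitz property), but two of your three cases rest on steps that fail as written. First, your scaling of the selection statistic is off by a factor of $K_{n,j}$: $T_j=\Delta\bar y_{d_n+4,j}-\Delta\bar y_{d_n-2,j}$ is an \emph{undivided} second difference, so its mean $\tau_j$ satisfies $\tau_j\le 2LK_{n,j}^{-r}$, not ``exact order $LK_{n,j}^{-(r-1)}$''. If the signal really scaled like $LK^{-(r-1)}$, balancing it against the threshold $\asymp\sigma\sqrt{K/n}$ would place your oracle at $K\asymp(L/\sigma)^{2/(2r-1)}n^{1/(2r-1)}$, whose variance $\sigma^2K/n\asymp n^{-(2r-2)/(2r-1)}$ already exceeds the claimed rate $n^{-2r/(2r+1)}$ for every $r\in[1,2]$, so your calibration of $j^*$ would not deliver the theorem; moreover only an \emph{upper} bound on $\tau_j$ is available ($f$ may be affine near $x_0$), so no ``exact order'' can be invoked. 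Second, your treatment of early stopping ($\hat j<j^*$) is not correct: passing the tests at $i<j$ does not ``force $j$ not too far below $j^*$'', and there is no deterministic guarantee $K_{n,j}\ge cK_{n,j^*}$ on the stopping event --- stopping at a very coarse scale is possible, merely improbable. The missing idea, which is the heart of the paper's Lemma~\ref{lem:fj1}, is that by convexity the interior bias is bounded by the mean of the very statistic being thresholded, $0\le\grave f(x_0)-f(x_0)\le 2\tau_j$, so that for each $j$ either $\tau_j$ is at the noise level $2^{j/2}n^{-2/5}\sigma$, or $\mathbb E(I_j)\le\exp\{-c\,\tau_j^2\,2^{-j}n^{4/5}/\sigma^2\}$ and then $\tau_j^2\,\mathbb E(I_j)\lesssim 2^jn^{-4/5}\sigma^2$ via $\sup_{z>0}ze^{-\rho^2z/2}=2\rho^{-2}e^{-1}$. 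This produces the $r$-free bound $\mathbb E[(\wt f_j(x_0)-f(x_0))^2I_j]\le C\,2^jn^{-4/5}\sigma^2$ for \emph{every} $j$, which is exactly what makes the sum over $j\le J+K$ come out at the minimax rate; without it your undersmoothing case has no proof.

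The overfine case ($\hat j>j^*$) also needs a different mechanism from the one you describe. Failing the test at a single scale has probability bounded only by a constant $\gamma=P(Z>\lambda-\delta)<1/4$, not something decaying in $j$, and ``the threshold grows like $2^{j/2}$'' is not what defeats the $2^j$ growth of the variance. What does is that the statistics $T_i$ are built from disjoint design points and hence are independent across scales, giving $\mathbb E(I_j)\le\gamma^{\,j-J-K}$; in addition, since $I_j$ and $\wt f_j(x_0)$ are dependent, you cannot factor $\mathbb E[(\wt f_j(x_0)-\grave f(x_0))^2I_j]$ into variance times probability --- the paper decouples by Cauchy--Schwarz, which requires the fourth-moment bound $\mathbb E|\wt f_j(x_0)-\grave f(x_0)|^4\lesssim\sigma^4K_{n,j}^2n^{-2}$ from Proposition~\ref{lem:pointwise} (again a consequence of the uniform Lipschitz property), and the resulting series $\sum_j 2^j\gamma^{(j-J-K)/2}$ converges precisely because $4\gamma<1$. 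This is the reason the rule insists on $P(Z>\lambda)<1/4$, a constraint your sketch never uses; as it stands, your geometric-series step is unjustified.
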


%

\subsection{Variance Estimation} \label{sect:effective_dim}

In practice, the variance $\sigma^2$ is replaced by the estimated variance $\hat\sigma^2$ in the above adaptive procedures. We will briefly study the  asymptotic properties of the maximum likelihood estimator of $\sigma^2$. 
Given the observation data $y=(y_1, \ldots, y_n)^T \in \mathbb R^n$ at design points $x=(x_1, \ldots, x_n)^T \in \mathbb R^n$, let
$\hat f_y := \big( \hat f^{[p]} (x_1), \ldots, \hat f^{[p]}(x_n)\big)^T$ with $p=\lceil r-1 \rceil$ and
$\vec f :=  \big( f(x_1), \ldots, f(x_n)\big)^T$. Let
$\alpha(y)$ be an index set
corresponding to the optimal coefficient $\hat b(X^T y/\beta_n)$ defined in
Section~\ref{subsect:PL_form}. Then for fixed $K_n$ and $p$, we have $\hat f_y
= A_{\alpha(y)} y$, where
\begin{equation} \label{eqn:A_alpha}
 A_{\alpha(y)} = X F_{\alpha(y)}^T\Big( F_{\alpha(y)} X^T X F_{\alpha(y)}^T
\Big)^{-1}F_{\alpha(y)} X^T \in \mathbb R^{n\times n}.
\end{equation}
It follows from the similar discussion as in
Section~\ref{subsect:PL_form} that $\hat f_y:\mathbb R^n \rightarrow
\mathbb R^n$ is a continuous piecewise linear function, where each
linear selection function is defined by $A_{\alpha(y)}$. 
The MLE of $\sigma^2$ is
$\hat\sigma^2 = \big \|y - A_{\alpha(y)} y \big \|_2^2/n.$

\begin{theorem}\label{thm:sig} Assume $f\in {\cal C}(r, L)$,  $K_n\rightarrow \infty$ as $n\rightarrow \infty$, and let $p=\lceil r-1 \rceil$. If $K_n = o(n)$, then $\hat\sigma^2 \rightarrow \sigma^2$ in probability, and if $K_n = o(\sqrt{n})$, then $\sqrt{n}\big(\hat\sigma^2 - \sigma^2)$ is asymptotically normal with mean zero and variance $2\sigma^4$. Furthermore, if $K_n$ is of order $n^{1\over 2r+1}$, then $\displaystyle  |\mathbb E ( \hat\sigma^2 - \sigma^2 )|  = O\big(n^{-2r\over 2r+1}\big)$.
%
%
\end{theorem}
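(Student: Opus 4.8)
The plan is to exploit the piecewise linear representation $\hat f_y = A_{\alpha(y)} y$ together with the uniform Lipschitz property (Theorem~\ref{theorem:uniform_Lip}) to control the residual $y - A_{\alpha(y)} y$ uniformly in the data-dependent index set $\alpha(y)$. Write $y = \vec f + \sigma\epsilon$ with $\epsilon = (\epsilon_1,\ldots,\epsilon_n)^T$. Then
\[
\hat\sigma^2 = \frac{1}{n}\big\|(I - A_{\alpha(y)})(\vec f + \sigma\epsilon)\big\|_2^2
= \frac{1}{n}\big\|(I-A_{\alpha(y)})\vec f\big\|_2^2 + \frac{2\sigma}{n}\epsilon^T(I-A_{\alpha(y)})^T(I-A_{\alpha(y)})\vec f + \frac{\sigma^2}{n}\big\|(I-A_{\alpha(y)})\epsilon\big\|_2^2.
\]
The first term is the squared bias: since $A_{\alpha(y)}$ reproduces the spline fit and the spline approximation error to $f\in{\cal C}_H(r,L)$ is $O(K_n^{-r})$ in sup-norm (this is the bias bound underlying Proposition~\ref{lem:bias}, and the unconstrained/constrained fit both lie in the spline space), we get $\frac{1}{n}\|(I-A_{\alpha(y)})\vec f\|_2^2 = O(K_n^{-2r})$, hence $o(1)$ whenever $K_n\to\infty$, and $O(n^{-2r/(2r+1)})$ when $K_n\asymp n^{1/(2r+1)}$. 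For the stochastic terms I would decompose $A_{\alpha(y)}\epsilon$ analogously and use that $A_{\alpha(y)}$ is a projection-type operator onto an at-most-$(K_n+p)$-dimensional subspace, so $\operatorname{rank}(A_{\alpha(y)})\le K_n+p$ uniformly in $\alpha$; this is the key structural fact that makes $\frac{1}{n}\|A_{\alpha(y)}\epsilon\|_2^2 = O_p(K_n/n)$ even though $\alpha$ depends on $\epsilon$.

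For consistency ($K_n=o(n)$): write $\frac{\sigma^2}{n}\|\epsilon\|_2^2 \to \sigma^2$ by the law of large numbers, and bound the remaining pieces. The term $\frac{\sigma^2}{n}\|A_{\alpha(y)}\epsilon\|_2^2$: I would take a supremum over all $2^{K_n+p-2}$ index sets, but more efficiently note that for each fixed $\alpha$, $\|A_\alpha\epsilon\|_2^2/\sigma^2$ is $\chi^2$ with at most $K_n+p$ degrees of freedom, and use a union bound with a Gaussian tail (as in the proofs of Propositions~\ref{lem:bias}--\ref{lem:pointwise}); since $\log(2^{K_n+p}) = O(K_n)$ and $K_n=o(n)$, this is $O_p(K_n/n)=o_p(1)$. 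The cross terms $\frac{2\sigma}{n}\epsilon^T(I-A_{\alpha(y)})\vec f$ and $-\frac{2\sigma^2}{n}\epsilon^T A_{\alpha(y)}\epsilon$ are handled by Cauchy–Schwarz against the bias and variance bounds just obtained. This yields $\hat\sigma^2 = \frac{\sigma^2}{n}\|\epsilon\|_2^2 + o_p(1)\to\sigma^2$.

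For asymptotic normality ($K_n=o(\sqrt n)$): the leading term $\sqrt n\big(\frac{\sigma^2}{n}\|\epsilon\|_2^2 - \sigma^2\big) = \frac{\sigma^2}{\sqrt n}\sum_{i=1}^n(\epsilon_i^2-1)$ is asymptotically $N(0,2\sigma^4)$ by the CLT. It remains to show every other term is $o_p(n^{-1/2})$. The bias term contributes $\sqrt n\cdot O(K_n^{-2r}) = o(1)$ since $r\ge 1$. The variance term $\sqrt n\cdot O_p(K_n/n) = O_p(K_n/\sqrt n)=o_p(1)$ precisely because $K_n=o(\sqrt n)$. The cross terms are $o_p(n^{-1/2})$ by Cauchy–Schwarz combined with the $O(K_n^{-2r})$ and $O_p(K_n/n)$ bounds. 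Slutsky's theorem then gives the claim. Finally, for the bias rate of $\mathbb E\hat\sigma^2$ when $K_n\asymp n^{1/(2r+1)}$: taking expectations, $\mathbb E\hat\sigma^2 - \sigma^2 = \frac{1}{n}\mathbb E\|(I-A_{\alpha(y)})\vec f\|_2^2 - \frac{\sigma^2}{n}\mathbb E\operatorname{tr}(A_{\alpha(y)}) + (\text{cross-term expectation})$; the squared-bias expectation is $O(K_n^{-2r}) = O(n^{-2r/(2r+1)})$, the effective-dimension term is $O(K_n/n) = O(n^{-2r/(2r+1)})$ as well, and the cross-term expectation is controlled by Cauchy–Schwarz to the same order.

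The main obstacle is handling the \emph{data dependence} of $\alpha(y)$ in the stochastic terms. One cannot treat $A_{\alpha(y)}$ as a fixed linear operator. The two tools that resolve this are: (i) the uniform rank bound $\operatorname{rank}(A_{\alpha(y)})\le K_n+p$ and the uniform operator-norm control coming from Theorem~\ref{theorem:uniform_Lip}(1) — which guarantees $\|A_{\alpha(y)}\|$ is bounded by a constant depending only on $p$, uniformly over $\alpha$ and $K_n$ — and (ii) a union bound over the finitely many (at most $2^{K_n+p-2}$) linear selection functions, whose logarithm is $O(K_n)$, small enough relative to $n$ (resp. $\sqrt n$) in each regime. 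Once these are in place, the argument reduces to the standard analysis of residual sums of squares for linear smoothers, mirroring the techniques already used in Propositions~\ref{lem:bias}--\ref{lem:pointwise}.
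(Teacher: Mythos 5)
There is a genuine gap, and it sits exactly where the paper's proof does its real work: the data--dependent cross term. The paper's argument (Lemma~\ref{lem:sig1}) applies Stein's identity to the almost-differentiable piecewise linear map $y\mapsto \hat f_y$, whose derivative is $A_{\alpha(y)}$ on the interior of each cone of the conic subdivision, to obtain the \emph{exact} identity $\mathbb E\big[\langle y-\vec f,\hat f_y-\vec f\rangle\big]=\sigma^2\,\mathbb E\big[\mathrm{trace}(A_{\alpha(y)})\big]\le c_{\infty,p}(K_n+p)\sigma^2$, the trace bound coming from the uniform Lipschitz property (Theorem~\ref{theorem:uniform_Lip}). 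Your plan replaces this by Cauchy--Schwarz, and Cauchy--Schwarz is a full square root too weak here. The honest bounds available are $\|\epsilon\|_2=O_p(\sqrt n)$ and $\mathbb E\|(I-A_{\alpha(y)})\vec f\|_2^2\le \mathbb E\|\hat f_y-\vec f\|_2^2=O(nK_n^{-2r}+K_n)$, so Cauchy--Schwarz gives $\big|\tfrac{1}{n}\mathbb E[\epsilon^T(I-A_{\alpha(y)})\vec f]\big|=O\big(K_n^{-r}+\sqrt{K_n/n}\big)$, which for $K_n\asymp n^{1/(2r+1)}$ is $O\big(n^{-r/(2r+1)}\big)$, not the $O\big(n^{-2r/(2r+1)}\big)$ claimed in part (iii). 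The same loss defeats part (ii): even granting your bias bound, $\sqrt n$ times the Cauchy--Schwarz estimate of the cross term is of order $\sqrt n\,K_n^{-r}$, which does not vanish under the stated hypotheses $K_n\to\infty$, $K_n=o(\sqrt n)$ (take $r=1$: you would need $K_n\gg\sqrt n$, contradicting $K_n=o(\sqrt n)$). A union bound over the $2^{K_n+p-2}$ pieces does not repair this either, because for ``bad'' index sets $\alpha$ the variance $\|(I-A_\alpha)\vec f\|_2^2$ is of order $n$. So the missing idea is precisely the Stein/divergence (degrees-of-freedom) identity, or an equally sharp substitute exploiting that $\alpha(y)$ is the set selected by the optimization.

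A secondary gap is the bias term. The claim $\tfrac1n\|(I-A_{\alpha(y)})\vec f\|_2^2=O(K_n^{-2r})$ does not follow from spline approximation theory: $A_{\alpha(y)}$ is the orthogonal projection onto the data-selected subspace $\{XF_{\alpha(y)}^Tc\}$, i.e.\ splines whose coefficients respect the active constraints, and for unfavorable $\alpha$ this subspace (e.g.\ forcing affine behavior over long stretches) approximates a strictly convex $f$ poorly; Proposition~\ref{lem:bias} concerns the noise-free constrained fit $\bar f_{(r)}$, not $A_{\alpha(y)}\vec f$ with random $\alpha(y)$. What is actually available is $\|(I-A_{\alpha(y)})\vec f\|_2\le\|\hat f_y-\vec f\|_2$ combined with the risk bound (\ref{eqn:statement_2}) (itself resting on Propositions~\ref{lem:bias} and \ref{lem:pointwise}), giving $O_p(K_n^{-2r}+K_n/n)$; and even with your stated bound, $\sqrt n\,K_n^{-2r}\to0$ is not implied by $K_n\to\infty$ alone. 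On the positive side, your union-bound/$\chi^2$ control of $\sup_\alpha\|A_\alpha\epsilon\|_2^2=O_p(K_n)$ (using $\log 2^{K_n+p}=O(K_n)\ll n$) is valid and is a legitimate alternative to the paper's expectation computation for that single quadratic term; but it does not rescue the cross terms, so as written the proposal does not establish parts (ii) or (iii).
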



Variance estimation based on the differences of successive points has been studied originally by Rice \cite{rice_84}. Compared to this estimation and other generalizations, the MLE $\hat\sigma^2$ has a smaller asymptotic variance but a slightly larger bias. Meyer and Woodroofe \cite{meyer_00} studied the bias reduction variance estimator for a monotone regression model using the least squares method. The bias reduction variance estimator for a convex spline model is nontrivial and shall be addressed in a future paper.

%

%
\section{Discussion} \label{sect:discussion}

We have considered the B-spline estimators for convex regression in this paper. The proposed estimator and asymptotic analysis techniques can be extended to other shape restricted inference problems. For example, it is known that the uniform Lipschitz property (in the $\ell_\infty$-norm) holds for the monotone constraint \cite{ShenWang_SICON11, wang_10}. Therefore the minimax optimal convergence rates and adaptive estimators can be established in a similar manner. It is conjectured that  the uniform Lipschitz property holds for a higher order difference constraint. However, its development is much more involved and shall be reported in the future.

We have provided  optimal rate adaptive estimators for convex regression under both the sup-norm and the pointwise risks. Nonetheless, the question of  explicit construction of asymptotically exact adaptive estimation over the H\"older classes remains open. Other interesting research directions include adaptive confidence bands and hypothesis tests in convex estimation \cite{dumbgen_03, dumbgen_01}.


%


%
%
\section{Proofs for Section~\ref{sect:Opm_conditions}} \label{sect:proof_optimality_PLform}

%
\subsection{Proof of Theorem \ref{thm:char}} \label{sect:proof_optimality}

\begin{proof}[Proof of Theorem \ref{thm:char}]
Write (\ref{equ:bm2}) as $\min_{b\in
  \Omega} g(b)$, where the objective function $g(b):= \frac{1}{2}b^T \Lambda b - b^T \bar y$. It is clear that $g$ is
   coercive on $\mathbb R^{K_n+p}$ and strictly convex on the closed convex set $\Omega$.
   This ensures the existence and uniqueness of an optimal
   solution. Furthermore, since $\Omega$ is a  polyhedral
   cone, it is finitely generated by $\{ v^1, - v^1, v^2, -v^2, v^{3}, v^4, \ldots, v^{K_n+p} \}$.
   Here, for each $k=3, \ldots, K_n+p$,
   %
\[
   v^k = \Big( \, \underbrace{0, \, \ldots, \,
   0,}_{(k-1)-\mbox{copies}} v^k_k, \ldots, v^k_{K_n+p} \, \Big)^T
    = \Big( \, \underbrace{0, \, \ldots, \,
   0,}_{(k-1)-\mbox{copies}} 1, 2, \ldots, K_n+p-k+1 \, \Big)^T,
\]
and for $k=1, 2$,
\begin{equation} \label{eqn:v_1_v_2}
   v^1=\Big( \, 1, \, 0, \, -1, \, -2, \ldots, -(K_n+p-2) \,
   \Big)^T, \
   v^2=\Big( \, 0, \, 1, \, 2, \, 3, \ldots, K_n+p-1 \, \Big)^T.
\end{equation}
It is easy to see that $\Delta^2 v^k_j=0$ for $k=1, 2$ and all $j > 2$.
Hence $\pm v^k \in \Omega$ for $k=1, 2$, and it can be also
verified that $\sum^2_{k=1} v^k = \bkone$. Further, any
$b=(b_1,\ldots, b_{K_n+p})^T \in \Omega$ can be positively
generated as
\[
    b = \sum^2_{i=1}  \Big( \max(0, b_i) v^i + \max(0, -b_i) (-v^i)
    \Big)  + \sum^{K_n+p}_{i=3} \Delta^2 (b_i) v^i.
\]
By using these generators for $\Omega$,  we obtain the following
necessary and sufficient optimality conditions for an optimizer
$\hat b$:
\begin{equation} \label{eqn:optm_Dr}
   0 \, \le \, D_2 \hat b \, \perp \, \wt C \nabla g(\hat b) \, \ge \, 0 \ \ \ \mbox{ and } \ \ \
    \langle v^k, \, \nabla g(\hat b) \rangle =0, \ \ \forall \ k=1, 2,
\end{equation}
where $D_2 \in \mathbb R^{(K_n-2+p)\times (K_n+p)}$ is given by
\begin{equation} \label{eqn:D_2}
    D_2 \, = \, \left[\begin{array}{ccccccccc}
   1 & -2 & 1  &0& \cdots & 0&  0 & 0 & 0\\
   0 & 1 & -2 &1 & \cdots & 0&  0 & 0 & 0\\
    & \cdots & & & \cdots &  & \cdots & \cdots \\
   0 & 0 & 0 & 0 & \cdots &  1& -2 & 1 &0 \\
   0 & 0 &  0 &0 &\cdots &  0& 1 & -2 & 1
   \end{array} \right], \
\end{equation}
and  $\wt C \in \mathbb R^{(K_n-2+p)\times (K_n+p)}$ is given by
\begin{align*}
  &\wt C = \begin{bmatrix} \, v^{3} & \cdots & v^{K_n+p}\,
    \end{bmatrix}^T\\
      =& \left[\begin{array}{ccccccccc}
   0 & 0 & 1  & 2 & \cdots & \, (K_n+p-4) \, & \, (K_n+p-3) \, & \, (K_n+p -2) \\
   0 & 0 & 0 &1 & \cdots & \cdots & \, (K_n+p-4) \, & (K_n+p -3) \\
    & \cdots & & & \cdots & & \cdots & \cdots \\
   0 & 0 & 0 & 0 & \cdots & 0 & 1 & 2 \\
   0 & 0 &  0 &0 &\cdots & 0 & 0 & 1
   \end{array} \right].
\end{align*}
It can be shown via the definitions of $v^1$ and $v^2$ in
(\ref{eqn:v_1_v_2}) that the second optimality condition in
(\ref{eqn:optm_Dr}) can be equivalently written as
\[
  \sum^{K_n+p}_{i=1} \big( \nabla g(\hatb) \big)_i =0 \ \ \ \mbox{
     and } \ \ \ \sum^{K_n+p}_{i=1} (K_n+p-i+1) \big( \nabla g(\hatb) \big)_i
     =0,
\]
where $\nabla g(b) = \Lambda b - \bar y$. This gives rise to the
two boundary conditions. Moreover, noting that for any $k$, the
definitions of $v^1$ and $v^2$ in (\ref{eqn:v_1_v_2}) yield
\[
   \wt C_{k\bullet} \nabla g(\hat b) = \sum^k_{i=1} \sum^{i}_{j=1} \big( \nabla g(\hatb)
       \big)_j = (C^2)_{k\bullet} \nabla g(\hat b),
\]
we obtain the equivalent condition for the first optimality
condition in (\ref{eqn:optm_Dr}):
\begin{equation} \label{eqn:optimum_complementarity}
       0 \, \le \, D_2 \, \hatb \, \perp \, \big( C^2 \big)_{\gamma \bullet} \nabla g(\hatb) \, \ge \,
       0,
 \end{equation}
where $\gamma=\{ 1, \ldots, K_n+p-2\}$. By
$(C^2)_{\gamma\bullet} = C_{\gamma\bullet} C$, the proof is
complete.
\end{proof}

%
\subsection{Construction and Proof for Proposition~\ref{lem:PL_formulation}} \label{sect:proof_PL_form}

%

We first construct  certain equations that yield a linear selection function corresponding to the index set
index set $\alpha = \{ \, i \, | \, (D_2 \hat b)_i =0 \} \subseteq \{1, \ldots, K_n+p-2 \}$ ($\alpha$ may be empty). Specifically, for the given $\hat b$ and $\alpha$, we define a vector
$\wt b^\alpha$ and an associated family of index sets $\{
\beta^\alpha_i \}$ in the following steps:
\begin{itemize}
  \item [(1)] let $\ell_1 := \min_{3\le i \le K_n+p} \{ i \, :
   \, \Delta^2(\hat b_i) = 0 \}$, and $\ol\ell_1 := \max_{\ell_1 \le k
  \le K_n+p} \{ k \, : \, \Delta^2(\hat b_i) = 0, \ \forall i
  =\ell_1, \ldots, k \}$. Then inductively define, for $j \ge 1$,
  \begin{align*}
      \ell_{j+1} :=& \min_{1+\ol\ell_{j} \le i \le K_n} \{ i \, :
  \, \Delta^2(\hat b_i) = 0 \}, \\
       \ol\ell_{j+1} :=& \max_{\ell_{j+1} \le k \le K_n} \{ k \, : \, \Delta^2(\hat b_i) = 0,
       \ \ \forall i =
       \ell_{j+1}, \ldots, k \}.
  \end{align*}
  Suppose that we obtain $q$'s such $\ell_i, \ol\ell_i$,
   namely, $\ell_1, \ldots, \ell_q$ and $\ol\ell_1, \ldots,
  \ol\ell_q$. Define $\wh \beta^\alpha_{\ell_j} := \{ i \, : \,
  \ell_j -2 \le i \le \ol\ell_j\}$ for $j=1, \ldots, q$. Note that
  $|\wh \beta^\alpha_{\ell_j}| \ge 3$ for each $\ell_j$, and  for two consecutive index sets,
  $\ell_{j+1} \ge \ol
  \ell_j+2 $. Thus if the equality holds, then $\wh \beta^\alpha_{\ell_j} \cap \wh
  \beta^\alpha_{\ell_{j+1}}=\{ \ol\ell_j\}$; otherwise, the two consecutive index sets are disjoint.
 \item [(2)] let $\wh L:=K_n +p +q -  | \cup^q_{i=1} \wh \beta^\alpha_{\ell_i}|$, where $|\cdot|$ denotes
  the cardinality of an index set. For each $i \in \{ 1, \ldots, K_n+p\} \setminus
   \cup^q_{i=1} \wh\beta^\alpha_{\ell_j}$, define
   $\wh\beta^\alpha_{\ell_s} = \{ i \}$, where $s=(q+1), \ldots, \wh L$.
 \item [(3)] this step arranges the index sets
 $\wh\beta^\alpha_{\ell_j}$ in a monotone order as follows.
  For each $\wh\beta^\alpha_{\ell_i}$, let $\min (\wh\beta^\alpha_{\ell_i})$ denote the least element in
  $\wh\beta^\alpha_{\ell_i}$ (the similar notation will be used for $\max$
  below). Define $\ell_{s_1}
    :=\arg\min_{\ell_1, \ldots, \ell_{\wh L}} \{ \min (\wh\beta^\alpha_{\ell_i})
    \}$.
  Let $\wt \beta^\alpha_1 := \wh \beta^\alpha_{\ell_{s_1}}$.
  Then inductively define for each $j \ge 1$, $\wt \beta^\alpha_{j+1}
   :=\wh \beta^\alpha_{\ell_{s_{j+1}}}$, where
  \[
     \ell_{s_{j+1}} :=\arg\min_{ \{\ell_1, \ldots, \ell_{\wh L}\} \setminus \{ \ell_{s_1},
     \ldots, \ell_{s_j}\} } \{ \min (\wh\beta^\alpha_{\ell_i}) \}.
  \]
 \item [(4)] in this step, we regroup the index sets
 $\wh\beta^\alpha_{\ell_j}$ in a way that preserves desired structural
 properties to be used in the subsequent development.
%
%
  Define $p_0:=0$ and $$p_1 := \max\big( \, 1, \ \ \ \max\{ k \ge 1 \, : \,
    \wt\beta^\alpha_j \cap \wt\beta^\alpha_{j+1} \ne \emptyset,  \  \forall j=1,\ldots, k-1
    \} \, \big),$$
    and $ \beta^\alpha_1 : = \cup^{p_1}_{j=1}
    \wt\beta^\alpha_j$, the companion index set $\vartheta_1:=\{  \min(\wt\beta^\alpha_{j}), \forall j=1,\ldots,
    p_1 \} \cup\{ \max( \wt\beta^\alpha_{p_1} )   \}$. Recursively, define, for each $s \ge 1$,
   $$p_{s+1} :=\max\big( \, p_s+1, \ \ \ \max\{ k \ge p_s+1 \, : \,
    \wt\beta^\alpha_j \cap \wt\beta^\alpha_{j+1} \ne \emptyset,  \ \forall j=p_s+1,\ldots, k-1
    \} \, \big),$$
    and $ \beta^\alpha_{s+1} : = \cup^{p_{s+1}}_{j=p_s+1}
    \wt\beta^\alpha_j$, the companion index set $\vartheta_{s+1}:=\{  \min(\wt\beta^\alpha_{j}),
    \forall j=p_s+1,\ldots, p_{s+1} \} \cup\{ \max( \wt\beta^\alpha_{p_{s+1}} )
    \}$. Without loss of generality, we assume that the index
    elements of each $\vartheta_s$ are in the strictly increasing order. Hence, any two
    consecutive index sets in $\vartheta_s$ correspond to $\ell_j$
    and $\ol\ell_j$ defined in Step (1) with $\ell_{j+1}= \ol\ell_j$.
 \item [(5)] suppose that there are $L$ such the index sets $\vartheta_s$, and
 let $\vartheta :=\cup^L_{s=1} \vartheta_s$ whose index elements are in
 the strictly increasing order. Then $\wt\beta^\alpha := ( \ol \beta_i )$,
 where $i\in \vartheta$.
\end{itemize}
%
%
%

It is clear from the above construction that $\{ \beta^\alpha_i
\}$ forms a finite and disjoint partition of $\{ 1, \ldots,
K_n+p\}$, namely, $ \bigcup^L_{i=1} \, \beta^\alpha_{i}=\{ 1,
\ldots, K_n+p\}$ and $\beta^\alpha_{j} \cap \beta^\alpha_{k} =
\emptyset$ whenever $j \ne k$.

For a given index set $\alpha$, we drop the sign restriction (i.e., the
inequality $D_2 b \ge 0$) in (\ref{equ:opt-m-1}) and obtain its corresponding linear
selection function from the following (possibly redundant)
equations:
\begin{subequations} \label{eqn:linear_piece}
  \begin{eqnarray}
     (D_2 \hat b)_\alpha & = & 0, \label{eqn:linPiece_1} \\
     D_2 \hat b & \perp & C_{\gamma\bullet} C \big( \Lambda \hat b - \bar y\big), \label{eqn:linPiece_2} \\
     C_{\ol \alpha\bullet} C \big( \Lambda \hat b - \bar y\big) &
           = & 0, \label{eqn:linPiece_3} \\
     C_{(K_n+p) \bullet} \, \big( \Lambda \hat b - \bar y\big) & = &
C_{(K_n+p) \bullet} \,  C \, \big( \Lambda \hat b - \bar y\big)=0,
\label{eqn:linPiece_4}
  \end{eqnarray}
\end{subequations}
where $\ol\alpha := \{ 1, \ldots, K_n+p-2\} \setminus \alpha$.
Indeed, we shall use equations (\ref{eqn:linPiece_1}),
(\ref{eqn:linPiece_2}), and (\ref{eqn:linPiece_4}) to characterize
a linear piece.
Let $\wt b^\alpha$ denote the vector constituting
the free variables of equation (\ref{eqn:linPiece_1}). With this construction and notation, we are ready to prove Proposition~\ref{lem:PL_formulation} as follows.

\begin{proof}[Proof of Proposition~\ref{lem:PL_formulation}]
   We introduce some notation first. Let $m^\alpha_i:=|\beta^\alpha_i|$ and $h^\alpha_i
   :=   m^\alpha_i -1$, where $i=1, \ldots, L$. Note that
    if $m^\alpha_i > 1$, then $m^\alpha_i \ge 3$ such that $h^\alpha_i \ge 2$
     and $|\vartheta_i| \ge 2$.
   It follows from the definition of $\beta^\alpha_i$ that
   $\hat b^\alpha = (F_\alpha)^T \wt b^\alpha$, where the matrix
   \begin{equation} \label{eqn:F_alp}
     F_\alpha
   = \begin{bmatrix} F_{\alpha, 1} &  & & \\ & F_{\alpha, 2} & & \\ & & \ddots & \\& & & F_{\alpha, L}
   \end{bmatrix} \in \mathbb R^{\ell \times (K_n+p) }
   \end{equation}
    and each matrix
   block corresponding to $\beta^\alpha_k$ is given as follows: if
   $m^\alpha_k=1$, then $F_{\alpha, k}  = 1$; otherwise, assuming that the index elements in
   $\vartheta_k$ in Step (5) above are in the strictly increasing order without loss of generality, and letting
   $h^\alpha_{k, j} := \vartheta_k(j+1) - \vartheta_k(j) \ge 2$ for each $j=1, \ldots,
    |\vartheta_k|-1$, we determine $F_{\alpha, k}\in \mathbb R^{| \vartheta_k| \times m^\alpha_k}$
    from
   $\beta^\alpha_k$ constructed in Steps (1)-(5) as
    \begin{equation} \label{eqn:F_k}
  F_{\alpha, k}=\left[ \begin{array}{cccccccccccc}
  1 & \mathbf h^{\alpha, k, 1}          &   0&         0        &       0  &0    & \cdots&    \cdots &   \cdots&    \cdots              \\
  0 & \wt {\mathbf h}^{\alpha, k, 1}   &   1& {\mathbf h}^{\alpha, k, 2} &      0 &        0  & \cdots&  \cdots&    \cdots   &    \cdots          \\
  0 & 0                         &   0 & \wt {\mathbf h}^{\alpha, k, 2}&1 & {\mathbf h}^{\alpha, k, 3}& \cdots &\\
    & \cdots                    &    \cdots                          &   &&&\\
    & \cdots                    &    \cdots                          &   &&&\\
    &                    &                              &   &&\cdots&\cdots \\
    &                    &                              &   &&\cdots&\cdots \\
    & \cdots                    &                              &  \cdots &  & &  &\cdots & {\mathbf h}_{\alpha, k, w_k} & 0\\
    & \cdots                    &                              &  \cdots &   & &  & \cdots& \wt {\mathbf h}^{\alpha, k, w_k} & 1
    \end{array}
  \right],
  \end{equation}
   where $w_k :=|\vartheta_k|-1$, and the row vectors
   \begin{eqnarray*}
   {\mathbf h}^{\alpha,k,j}  &: =& \begin{bmatrix} \frac{h^\alpha_{k,j} -1}{h^\alpha_{k,j} } &
       \frac{h^\alpha_{k,j}-2}{h^\alpha_{k,j}} & \cdots &  \frac{1}{h^\alpha_{k, j}}\end{bmatrix},  \quad j=1, \ldots, w_k, \\
       \wt {\mathbf h}^{\alpha,k,j} &: = &  \begin{bmatrix} \frac{1}{h^\alpha_{k, j}} & \frac{2}{h^\alpha_{k, j} } & \cdots &        \frac{h^\alpha_{k,j} -1}{h^\alpha_{k,j}  } \end{bmatrix}, \quad j =1, \ldots, w_k.
   \end{eqnarray*}
  %
%
%

  For notational simplicity, let $v:= \Lambda \hat b - \bar y$. In view of the complementarity condition
  in (\ref{equ:opt-m-1}), we have $(D_2 \hat b)^T
  C_{\gamma\bullet} C v =0$. Since $\hat b = (F_\alpha)^T \wt
  b^\alpha$,  $(\wt b^\alpha)^T F_\alpha ( D^T_2 C_{\gamma\bullet} C
  v)=0$. Moreover, it can be further verified that
  \[
       D^T_2 C_{\gamma\bullet} C = \begin{bmatrix} I_{K_n+p-2} & &
       0_{(K_n+p-2)\times 2}
       \\ E &  & 0_{2\times 2} \end{bmatrix} \in \mathbb R^{(K_n+p)\times (K_n+p)},
  \]
  where
  \[
     E = \begin{bmatrix} -(K_n+p-1) & -(K_n+p-2) & \cdots & \cdots & -2 \\
          K_n+p-2 & K_n+p-3 & \cdots & \cdots & 1 \end{bmatrix} \in
          \mathbb R^{2\times (K_n+p-2)}.
  \]
  It also follows from the boundary conditions $C_{(K_n+p)\bullet} v
  = C_{(K_n+p)\bullet} C v = 0$ and elementary row operations that $[
     -E \ \ I_2 ] v =0$. Therefore, we obtain $D^T_2 C_{\gamma\bullet} C v = I_{(K_n+p)} ~v = v$.
   Hence,  $(\wt b^\alpha)^T F_\alpha ( D^T_2 C_{\gamma\bullet} C
  v)= (\wt b^\alpha)^T  F_\alpha v =0$. Recall that for the given index set $\alpha$,
  $\wt b^\alpha$ corresponds to the free variables of the equation
  (\ref{eqn:linPiece_1}). Hence, $\wt b^\alpha$ is arbitrary such that $F_\alpha v =0$. This leads to
  \[
    F_\alpha \, \Lambda (F_\alpha)^T \, \wt b^{\, \alpha} = F_\alpha \, \bar y.
  \]
   Letting $\wt \Lambda^\alpha =F_\alpha \, \Lambda (F_\alpha)^T$ and
   $\wt y^{\, \alpha} = F_\alpha \, \bar y$, we
  obtain the linear equation for $\wt b^{\, \alpha}$. Since $F_\alpha$ is of
  full row rank and $\Lambda$ is positive definite, $\wt \Lambda^\alpha $ is  positive
  definite and hence is invertible. Consequently, we have $\hat b^\alpha (\bar y) = F^T_\alpha \wt b^\alpha(\bar y) = F^T_\alpha \big(F_\alpha \Lambda F^T_\alpha \big)^{-1} F_\alpha \bar y$.
\end{proof}

%

\section{Proof of Theorem ~\ref{theorem:uniform_Lip}}
\label{sect:proof_uniform_Lip}

We divide the proof of Theorem~\ref{theorem:uniform_Lip} into
several steps. We first establish a result pertaining to $F_\alpha
F^T_\alpha$.

\begin{lemma} \label{lem:F*Ftranspose}
 For any $K_n$ and $\alpha$, $F_\alpha F^T_\alpha$ is a strictly
 diagonally dominant, nonnegative, tridiagonal matrix.
\end{lemma}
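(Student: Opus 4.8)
The plan is to analyze the block-diagonal structure of $F_\alpha$ and reduce everything to a statement about a single block $F_{\alpha,k}$. Since $F_\alpha = \mathrm{diag}(F_{\alpha,1}, \ldots, F_{\alpha,L})$ as in (\ref{eqn:F_alp}), we have $F_\alpha F_\alpha^T = \mathrm{diag}(F_{\alpha,1}F_{\alpha,1}^T, \ldots, F_{\alpha,L}F_{\alpha,L}^T)$, so it suffices to prove that each diagonal block $F_{\alpha,k}F_{\alpha,k}^T$ is strictly diagonally dominant, nonnegative, and tridiagonal; a block-diagonal matrix inherits all three properties from its blocks. First I would dispose of the trivial blocks: when $m^\alpha_k=1$ the block $F_{\alpha,k}=1$ and $F_{\alpha,k}F_{\alpha,k}^T=1$, which is vacuously a $1\times1$ strictly diagonally dominant nonnegative tridiagonal matrix.

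For the nontrivial blocks, I would work directly from the explicit form (\ref{eqn:F_k}). Observe that the rows of $F_{\alpha,k}$ have a staircase overlap pattern: consecutive rows $i$ and $i+1$ share support only in the columns occupied by the ``junction'' pair consisting of the trailing $\widetilde{\mathbf h}^{\alpha,k,j}$ of row $i$ together with the leading $1$ of row $i$, against the leading $1$ and ${\mathbf h}^{\alpha,k,j}$ of row $i+1$ — actually the key observation is that rows $i$ and $i+1$ overlap in exactly the block of columns indexed around $\vartheta_k(j+1)$, while rows $i$ and $i+2$ (or farther apart) have disjoint support. This immediately gives that $F_{\alpha,k}F_{\alpha,k}^T$ is tridiagonal. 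Nonnegativity is clear since every entry of $F_{\alpha,k}$ is nonnegative (the entries of ${\mathbf h}^{\alpha,k,j}$ and $\widetilde{\mathbf h}^{\alpha,k,j}$ are ratios of positive integers, and the explicit $1$'s and $0$'s are nonnegative), so all inner products of rows are nonnegative.

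The substantive part is strict diagonal dominance, i.e., for each row $i$ of $F_{\alpha,k}F_{\alpha,k}^T$, $(F_{\alpha,k}F_{\alpha,k}^T)_{ii} > \sum_{j\ne i}|(F_{\alpha,k}F_{\alpha,k}^T)_{ij}|$. Writing $r_i$ for the $i$th row of $F_{\alpha,k}$, this is the inequality $\|r_i\|_2^2 > \sum_{j\ne i} \langle r_i, r_j\rangle$ (absolute values drop by nonnegativity), which by tridiagonality reduces to comparing $\|r_i\|_2^2$ with at most $\langle r_i, r_{i-1}\rangle + \langle r_i, r_{i+1}\rangle$. I would compute these using the closed forms $\sum_{s=1}^{h-1}(s/h)^2$, $\sum_{s=1}^{h-1}(s/h)\cdot((h-s)/h)$, etc.; the overlap inner product $\langle r_i, r_{i+1}\rangle$ picks up a contribution like $1\cdot(\text{something}) + \sum \widetilde{\mathbf h}\cdot{\mathbf h}$ from the shared junction columns, and the diagonal entry $\|r_i\|_2^2$ contains the full $1^2$ from each of its two anchoring $1$'s plus the squared-norm of ${\mathbf h}$ and $\widetilde{\mathbf h}$ vectors, with the $1$'s not all cancelled by the off-diagonal terms. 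The arithmetic should show the slack is at least a fixed positive amount (in fact the ``$1$'' anchors are what guarantee strictness).

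I expect the main obstacle to be bookkeeping rather than any deep idea: correctly reading off from (\ref{eqn:F_k}) which columns of consecutive rows overlap, handling the boundary rows (first and last rows of $F_{\alpha,k}$, which have only one neighbor and a slightly different column layout), and keeping the indexing of the $h^\alpha_{k,j}$ consistent through the junctions where one block's $\widetilde{\mathbf h}$ meets the next block's ${\mathbf h}$. Once the overlap structure is pinned down, verifying tridiagonality and nonnegativity is immediate, and strict diagonal dominance follows from a short computation with the standard sums $\sum_{s=1}^{h-1} s = h(h-1)/2$ and $\sum_{s=1}^{h-1} s^2 = (h-1)h(2h-1)/6$ together with the observation that the isolated $1$ entries contribute to the diagonal but only partially to the off-diagonal sums.
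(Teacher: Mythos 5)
Your plan is correct and is essentially the paper's own argument: the paper likewise works block by block with the explicit form of $F_{\alpha,k}$ in (\ref{eqn:F_k}), computes the tridiagonal entries of $F_\alpha F_\alpha^T$ via the standard power sums (obtaining diagonal entries such as $\tfrac{2h^2+1}{6h}$-type terms and off-diagonals $\tfrac{h^2-1}{6h}$), and concludes strict dominance from explicitly positive row margins, exactly as you propose. One small slip in your anticipated bookkeeping: the isolated $1$ entries contribute \emph{nothing} to the off-diagonal inner products (the neighboring row is zero in those columns, since row $i$'s $1$ sits at $\vartheta_k(i)$ and the overlap of rows $i$ and $i+1$ is only the columns strictly between $\vartheta_k(i)$ and $\vartheta_k(i+1)$), which only strengthens the dominance you need, so the computation goes through as planned.
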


\begin{proof}
  Recall that $\ell:=K_n+p- |\alpha|$. For notational simplicity,
  let $G:=F_\alpha F^T_\alpha$.
  First of all, it is easy to verify via (\ref{eqn:F_alp}) and
   (\ref{eqn:F_k}) that $G$ is
   the  $\ell\times \ell$ tri-diagonal matrix given by
  \[
      \begin{bmatrix} d_{11} & \wt \eta_1 & 0 & \cdots  & \cdots & 0 \\
        \wt \eta_1 & d_{22} &  \wt\eta_2 &  & &  \\
         & \wt \eta_2 & d_{33} &  \wt\eta_3 & &   \\
       &  & \ddots & \ddots & \ddots & \\
       & & &  \wt \eta_{\ell-2} & d_{(\ell-1)(\ell-1)} & \wt \eta_{\ell-1} \\
       0 & \cdots  & \cdots & 0  & \wt \eta_{\ell-1} & d_{\ell \ell}
       \end{bmatrix}.
    \]
   The entries on the three diagonal bands are determined as follows. Consider
    $F_\alpha$ in (\ref{eqn:F_alp}) with $L$ blocks.
    Fix $k\in \{1, \ldots, L\}$. If $m^\alpha_k=1$, then $F_{\alpha, k} F^T_{\alpha, k}$
  is a real number that appears on the diagonal of $G$. Denoting this
  number by $d_{ss}$, we have $
    d_{ss} = F_{\alpha, k} F^T_{\alpha, k}  = 1 $ and
    $G_{s(s+1)}=G_{(s+1)s}=0$, $G_{s j} =0$ for
all $j \le s-2$ and $j \ge s+2$. If $m^\alpha_k>1$, then
$F_{\alpha, k} F^T_{\alpha, k} $ is a symmetric, positive definite
matrix of order $|\vartheta_k|$ that forms a diagonal block of
$G$. Making use of the structure of $F_{\alpha, k}$ given in the proof of
Proposition~\ref{lem:PL_formulation} and somewhat lengthy computation, we obtain the
following results in two separate cases (recalling
$w_k:=|\vartheta_k|-1$):
\begin{itemize}
  \item [(1)] $k=1$ or $k=L$. For $k=1$,
  \begin{eqnarray*}
      d_{11} & =&  1+
          \frac{ (h^\alpha_{1,1}-1)(2h^\alpha_{1,1}-1)}{6 h^\alpha_{1,1} }, \\ 
      \wt \eta_{s} & =  &  G_{s(s+1)} \, = \, G_{(s+1)s} \, = \,
           \frac{ (h^\alpha_{1, s})^2-1 }
           {6 h^\alpha_{1, s}}, \ \ \forall \ s=1, \ldots, w_1, \\
      d_{ss} & =  &  \frac{2 (h^\alpha_{1,s-1})^2+1 }{6 h^\alpha_{1, s-1} }
        + \frac{ 2 (h^\alpha_{1,s})^2+1 }{6 h^\alpha_{1, s} },
          \qquad \  \forall \ s=2, \ldots, w_1,  \\
      d_{(w_1+1)(w_1+1)} & = & \frac{ (h^\alpha_{1, w_1} +1)(2h^\alpha_{1, w_1}+1) }
      {6 h^\alpha_{1, w_1} }. 
%
  \end{eqnarray*}
  Besides, $G_{(w_1+1)(w_1+2)}=G_{(w_1+2)(w_1+1)}=0$ and
  for each $s=1, \ldots, w_t$, $G_{s j}=0, \forall j \ge s+2$ and $j \le s-2$.
  For $k=L$, the similar
   results can be established by using the symmetry of the rows of $F_{\alpha, L}$.
 \item [(2)] $k\in \{ 2, \ldots, L-1\}$. In this case, suppose
 that the $(1,1)$-element of $F_{\alpha, k} F^T_{\alpha,
 k}$, which is a diagonal entry of $G$, is denoted by
 $d_{tt}$. Then we have
   \begin{eqnarray*}
     d_{tt} & = & 1 +
          \frac{ (h^\alpha_{k,1}-1)(2h^\alpha_{k,1}-1)}{6 h^\alpha_{k,1}  },  \label{eqn:dss} \\
     \wt \eta_{t+s} & = & G_{(t+s)(t+s+1)} \, = \,
             \frac{ (h^\alpha_{k, s})^2-1 }
           {6 h^\alpha_{k, s}}, \quad \ \ \ \forall \ s=1, \ldots, w_k, \\
    d_{(t+s)(t+s)} & = & \frac{2 (h^\alpha_{k,s+1})^2+1 }{6
      h^\alpha_{k, s+1} }
        + \frac{ 2 (h^\alpha_{k,s})^2+1 }{6 h^\alpha_{k, s} }, \quad \  \forall \ s=1, \ldots, w_k-1,  \\
    d_{(t+w_k)(t+w_k)} & = &  \frac{ (h^\alpha_{k, w_k} +1)(2h^\alpha_{k, w_k}+1) }
      {6 h^\alpha_{k, w_k} }. \label{eqn:ds+1}
   \end{eqnarray*}
    In addition, for each $s=t, \ldots, t+w_k+1$,
    $G_{s j}=0$ for all $j \le s-2$ and $j \ge s+2$,  and
    $G_{t(t-1)}=G_{(t+w_k+1)(t+w_k+2)}=0$.
\end{itemize}
 Due to  $G_{t(t-1)}=0$ and the symmetry of $G$, we further deduce that if a diagonal entry
 $d_{tt}= G_{tt}$ with $t \ge 2$
 corresponds to a scalar $ F_{\alpha, k} F^T_{\alpha,
 k}$ (i.e., $m^\alpha_k=1$), then $G_{(t-1)t} = 0$.
 (Recall that $G_{t(t+1)} = 0$ has been shown before.)
 Similarly, if $d_{tt}$ is the first diagonal entry of
  $ F_{\alpha, k} F^T_{\alpha,
 k}$, then $G_{(t-1)t} = 0$.

In the next, we show that $G$ is strictly diagonally dominant.
For a given $G \in \mathbb R^{\ell\times \ell}$, define
\[
  \xi_1 := d_{11}-|\wt \eta_1|, \ \  \xi_\ell:= d_\ell-|\wt
  \eta_{\ell-1}|,  \quad \mbox{ and } \ \
   \xi_i := d_{ii} - |\wt \eta_{i-1}|-|\wt
  \eta_{i}|, \ \ \ \forall \, i\in\{2,\ldots, \ell-1\}.
\]
  In light of the entries of $G$ obtained above, we have, for each
  $k\in\{1, \ldots, L\}$,
  \begin{itemize}
      \item [(1.1)] if $m^\alpha_k=1$, then
       $\xi_i= 1$.
      \item [(1.2)] if $m^\alpha_k>1$ with $k=1$, then (i) the corresponding
       $\xi_i=d_{11} - |\eta| \ge \frac{1}{2}+\frac{h^\alpha_{1,1}}{6} $;
       (ii) for $s=2, \ldots, w_1$, the corresponding $\xi_i=d_{ss}-|G_{s(s-1)}|
        -|G_{s(s+1)}| \ge
        \big( h^\alpha_{1,s-1} + h^\alpha_{1,s}\big)/6$; and (iii) the corresponding
       $\xi_i=d_{(w_1+1)(w_1+1)} - |G_{(w_1+1)w_1}| -|G_{(w_1+1)(w_1+2)}| \ge
        \frac{1}{2}+\frac{h^\alpha_{1,w_1}}{6}$.
        The similar results can be obtained for $m^\alpha_k>1$ with
        $k=L$ using symmetry.
      \item [(1.3)] if $m^\alpha_k>1$ with $k\in\{ 2, \ldots,
      L-1\}$, then (i) the corresponding
       $\xi_i=d_{tt} - |G_{t(t-1)}| -|G_{t(t+1)}| \ge
       \frac{1}{2}+\frac{h^\alpha_{k,1}}{6}$;
       (ii) for $s=1, \ldots, w_k-1$, the corresponding $\xi_i=d_{(t+s)(t+s)}-|G_{(t+s)(t+s-1)}|
        -|G_{(t+s)(t+s+1)}| \ge
        \big( h^\alpha_{k,s} + h^\alpha_{k,s+1}\big)/6$; and (iii) the corresponding
       $\xi_i=d_{(t+w_k)(t+w_k)} - |G_{(t+w_k)(t+w_k-1)}| -|G_{(t+w_k)(t+w_k+1)}| \ge
        \frac{1}{2}+\frac{h^\alpha_{k,w_k}}{6}$.
  \end{itemize}
   Consequently, $\xi_i >0$ for all $i$ and $G$ is strictly diagonally
   dominant.
\end{proof}


For the given $F_\alpha$, define $\eta_i$ as the sum of the
entries in the $i$th row of $F_\alpha$, $i=1, \ldots, \ell$. We
have:
\begin{itemize}
      \item [(i)] if $m^\alpha_k=1$, then
       $\eta_i= 1$.
      \item [(ii)] if $m^\alpha_k>1$ with $k=1$, then (i) for $s=1$, the corresponding
       $\eta_i=\frac{1+h^\alpha_{1,1}}{2} $;
       (ii) for $s=2, \ldots, w_1$, the corresponding $\eta_i=
        \big( h^\alpha_{1,s-1} + h^\alpha_{1,s}\big)/2$; and (iii) for $s=w_1+1$, the corresponding
       $\eta_i=\frac{1+h^\alpha_{1,w_1}}{2}$.
        The similar results can be obtained for $m^\alpha_k>1$ with
        $k=L$ using symmetry.
      \item [(iii)] if $m^\alpha_k>1$ with $k\in\{ 2, \ldots,
      L-1\}$, then (i) the corresponding
       $\eta_i =\frac{1+h^\alpha_{k,1}}{2}$;
       (ii) for $s=1, \ldots, w_k-1$, the corresponding $\eta_i=
        \big( h^\alpha_{k,s} + h^\alpha_{k,s+1}\big)/2$; and (iii) the corresponding
       $\eta_i= \frac{1+h^\alpha_{k,w_k}}{2}$.
  \end{itemize}
Hence, each $\eta_i>0$. Define the diagonal matrix
\begin{equation} \label{eqn:Xi}
 \Xi_\alpha : = \mbox{diag} \Big ( \eta^{-1}_1, \ldots, \eta^{-1}_\ell \Big ).
\end{equation}
%
%

The next lemma shows the equivalence of the (absolute) row sum of
$F_\alpha F^T_\alpha$ and that of $F_\alpha$.

\begin{lemma} \label{lem:row_sum}
  For any $K_n$ and $\alpha$, $\eta_j =
  \sum^\ell_{k=1}(F_\alpha F^T_\alpha)_{jk}$ for each $j=1, \ldots, \ell$.
\end{lemma}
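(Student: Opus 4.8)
The claim is that the $j$th (absolute) row sum of $F_\alpha F_\alpha^T$ equals the row sum $\eta_j$ of $F_\alpha$ itself, for every row $j$. My plan is to reduce everything to the block structure of $F_\alpha$ given in \eqref{eqn:F_alp}, and then to a per-block computation using the explicit form of $F_{\alpha,k}$ in \eqref{eqn:F_k}. Since $F_\alpha$ is block-diagonal, $F_\alpha F_\alpha^T$ is also block-diagonal with blocks $F_{\alpha,k}F_{\alpha,k}^T$, so it suffices to prove the identity block by block: for each $k$ and each row index $i$ inside the $k$th block, the sum of entries in row $i$ of $F_{\alpha,k}F_{\alpha,k}^T$ equals the sum of entries in row $i$ of $F_{\alpha,k}$. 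The trivial case $m^\alpha_k=1$ is immediate since then $F_{\alpha,k}=1$ and $F_{\alpha,k}F_{\alpha,k}^T=1$.

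\textbf{Key step.} For $m^\alpha_k>1$, I would use the observation that the row sum of $F_{\alpha,k}F_{\alpha,k}^T$ is $\sum_i (F_{\alpha,k}F_{\alpha,k}^T)_{ji} = (F_{\alpha,k} F_{\alpha,k}^T \mathbf{1})_j = (F_{\alpha,k}(F_{\alpha,k}^T \mathbf{1}))_j$, where $\mathbf{1}$ is the all-ones vector of the appropriate dimension. So the whole lemma amounts to showing that $F_{\alpha,k}^T \mathbf{1}_{|\vartheta_k|}$ — the vector of column sums of $F_{\alpha,k}$ — is itself the all-ones vector $\mathbf{1}_{m^\alpha_k}$. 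Indeed, if that holds, then $F_{\alpha,k}F_{\alpha,k}^T\mathbf{1} = F_{\alpha,k}\mathbf{1} = \eta$-vector, which is exactly the desired identity. So the crux is: \emph{every column of $F_{\alpha,k}$ sums to $1$.} This I would verify directly from \eqref{eqn:F_k}: each interior column of a block sits inside exactly one of the row-vector groups $\mathbf{h}^{\alpha,k,j}$ stacked over $\widetilde{\mathbf h}^{\alpha,k,j}$, and the entries are $\frac{h-1}{h},\ldots,\frac1h$ paired against $\frac1h,\ldots,\frac{h-1}{h}$; column $m$ of that group contributes $\frac{h-m}{h}$ from the $\mathbf h$ part and $\frac{m}{h}$ from the $\widetilde{\mathbf h}$ part, summing to $1$. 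The columns carrying the isolated $1$'s (the leftmost of each block and those at the overlap/junction positions $\vartheta_k(j)$) obviously sum to $1$ as well since the only nonzero entry in such a column is that single $1$. This handles all columns, hence all cases $k=1$, $k=L$, and $k\in\{2,\ldots,L-1\}$ uniformly.

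\textbf{Alternative / bookkeeping.} If one prefers to avoid the column-sum reformulation, one can instead directly cross-check the entries: the lemma is really a consistency statement between the $d_{ii},\wt\eta_i$ computed in the proof of Lemma~\ref{lem:F*Ftranspose} and the $\eta_i$ listed just before \eqref{eqn:Xi}. For example, for an interior row $s$ of the first block one would need $\wt\eta_{s-1} + d_{ss} + \wt\eta_s = \eta_i$, i.e.
\[
\frac{(h^\alpha_{1,s-1})^2-1}{6h^\alpha_{1,s-1}} + \frac{2(h^\alpha_{1,s-1})^2+1}{6h^\alpha_{1,s-1}} + \frac{2(h^\alpha_{1,s})^2+1}{6h^\alpha_{1,s}} + \frac{(h^\alpha_{1,s})^2-1}{6h^\alpha_{1,s}} = \frac{h^\alpha_{1,s-1}+h^\alpha_{1,s}}{2},
\]
which is an elementary identity; the boundary rows and the $k\in\{2,\ldots,L-1\}$ blocks give analogous identities. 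The main obstacle is purely organizational: keeping the index bookkeeping straight across the overlap positions where two consecutive $\wt\beta^\alpha_j$ share an endpoint (so a column of $F_{\alpha,k}$ belongs to two row-groups), and making sure the junction rows between blocks contribute no cross terms — but this is exactly the $G_{(t-1)t}=0$ vanishing already established in Lemma~\ref{lem:F*Ftranspose}, so there is nothing genuinely hard here. I expect the column-sum argument to be the cleanest route and would present that.
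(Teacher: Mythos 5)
Your column-sum argument is exactly the paper's proof: the paper likewise reduces the claim to the fact that the sum of all rows of $F_\alpha$ is $\mathbf 1$ (equivalently $F_\alpha^T\mathbf 1=\mathbf 1$), so that the $j$th row sum of $F_\alpha F_\alpha^T$ equals $\langle (F_\alpha)_{j\bullet},\mathbf 1\rangle=\eta_j$. Your proposal is correct and takes essentially the same route, merely adding the explicit per-column verification from \eqref{eqn:F_k} (pairing $\tfrac{h-m}{h}$ with $\tfrac{m}{h}$), which the paper states without detail.
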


\begin{proof}
  Let $(F_{\alpha})_{j\bullet}$ denote the $j$th row of $F_{\alpha}$.
  Then
  \[
    \sum^\ell_{k=1} | (F_\alpha F^T_\alpha)_{jk}| = \langle
      (F_{\alpha})_{j\bullet}, \sum^\ell_{k=1} \big( (F_\alpha)_{k\bullet}\big)^T \rangle
     = \langle  (F_\alpha)_{j\bullet}, {\bf 1} \rangle =
     \sum^\ell_{k=1}(F_\alpha)_{jk} = \eta_j ,
  \]
  where we use the fact that the sum of all rows of $F_\alpha$ is
  $\mathbf 1:=(1, \ldots, 1)$.
\end{proof}

\begin{proposition} \label{prop:F_eign}
 For any $K_n$ and $\alpha$, the following statements hold:
 \begin{itemize}
    \item [(1)] the eigenvalues of $\Xi_\alpha F_\alpha \Lambda F^T_\alpha$ and $\Xi_\alpha F_\alpha F^T_\alpha$ are all positive reals;
    \item [(2)] $\displaystyle   \lambda_{\min}(\Lambda) \lambda_{\min}( \Xi_\alpha F_\alpha  F^T_\alpha )
    \le \lambda_{\min} ( \Xi_\alpha F_\alpha \Lambda F^T_\alpha
   )$ and \\ $ \lambda_{\max} ( \Xi_\alpha F_\alpha \Lambda F^T_\alpha)
    \le \lambda_{\max}(\Lambda) \lambda_{\max}( \Xi_\alpha F_\alpha  F^T_\alpha
    )$.
 %
 \end{itemize}
\end{proposition}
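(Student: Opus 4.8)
The plan is to reduce both claims to two standard facts: that the product of a symmetric positive definite matrix with a symmetric (positive definite) matrix is similar to a symmetric (positive definite) matrix, and the Courant--Fischer characterization of extreme eigenvalues. First I would record the building blocks. By construction $\Lambda$ is positive definite, and $F_\alpha$ has full row rank by Proposition~\ref{lem:PL_formulation}; hence both $F_\alpha F^T_\alpha$ and $F_\alpha \Lambda F^T_\alpha$ are symmetric positive definite. Moreover $\Xi_\alpha$ in (\ref{eqn:Xi}) is diagonal with strictly positive entries $\eta_i^{-1}$, since each $\eta_i>0$ as computed just above; thus $\Xi_\alpha$ is symmetric positive definite and admits a symmetric positive definite square root $\Xi_\alpha^{1/2}$.

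For part (1), the key observation is that if $A$ is symmetric positive definite and $B$ is symmetric, then $A^{-1/2}(AB)A^{1/2} = A^{1/2} B A^{1/2}$, so $AB$ is similar to the symmetric matrix $A^{1/2} B A^{1/2}$; in particular $AB$ has only real eigenvalues, which are strictly positive whenever $B$ is positive definite. Applying this with $A=\Xi_\alpha$ and $B \in \{F_\alpha F^T_\alpha,\ F_\alpha \Lambda F^T_\alpha\}$ shows that both $\Xi_\alpha F_\alpha F^T_\alpha$ and $\Xi_\alpha F_\alpha \Lambda F^T_\alpha$ have positive real spectra, and furthermore
\[
\lambda_{\min}\big(\Xi_\alpha F_\alpha \Lambda F^T_\alpha\big) = \lambda_{\min}\big( \Xi_\alpha^{1/2} F_\alpha \Lambda F^T_\alpha \Xi_\alpha^{1/2} \big), \qquad \lambda_{\max}\big(\Xi_\alpha F_\alpha \Lambda F^T_\alpha\big) = \lambda_{\max}\big( \Xi_\alpha^{1/2} F_\alpha \Lambda F^T_\alpha \Xi_\alpha^{1/2} \big),
\]
with the analogous identities for $F_\alpha F^T_\alpha$ in place of $F_\alpha \Lambda F^T_\alpha$. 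This replaces the non-symmetric products by honest symmetric matrices on which the variational characterization of eigenvalues is available.

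For part (2), I would then work directly with the Rayleigh quotient of $\Xi_\alpha^{1/2} F_\alpha \Lambda F^T_\alpha \Xi_\alpha^{1/2}$. Setting $z := F^T_\alpha \Xi_\alpha^{1/2} x$ --- which is nonzero whenever $x\ne 0$, since $\Xi_\alpha^{1/2}$ is invertible and $F^T_\alpha$ has full column rank --- one has $x^T \Xi_\alpha^{1/2} F_\alpha \Lambda F^T_\alpha \Xi_\alpha^{1/2} x = z^T \Lambda z$ and $z^T z = x^T \Xi_\alpha^{1/2} F_\alpha F^T_\alpha \Xi_\alpha^{1/2} x$, so the elementary bound $\lambda_{\min}(\Lambda)\, z^T z \le z^T \Lambda z \le \lambda_{\max}(\Lambda)\, z^T z$ gives, after dividing by $x^T x$ and taking the infimum (respectively supremum) over $x \ne 0$ and invoking the identities from part (1), precisely the two inequalities asserted in (2).

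The argument is essentially routine linear algebra. The only point requiring a little care is the passage in part (1) from the non-symmetric products $\Xi_\alpha F_\alpha \Lambda F^T_\alpha$ and $\Xi_\alpha F_\alpha F^T_\alpha$ --- which are the objects the proposition actually concerns --- to their symmetrized conjugates, so that the eigenvalue/Rayleigh-quotient machinery is legitimately applicable; beyond that I do not anticipate any genuine obstacle.
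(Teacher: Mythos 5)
Your proposal is correct and follows essentially the same route as the paper's proof: conjugate by $\Xi_\alpha^{1/2}$ to replace the nonsymmetric products by the symmetric matrices $\Xi_\alpha^{1/2}F_\alpha\Lambda F_\alpha^T\Xi_\alpha^{1/2}$ (the paper establishes the spectral equality by a determinant identity, you by the similarity $\Xi_\alpha^{-1/2}(\Xi_\alpha B)\Xi_\alpha^{1/2}=\Xi_\alpha^{1/2}B\Xi_\alpha^{1/2}$, which is the same fact), and then bound the Rayleigh quotient via $z=F_\alpha^T\Xi_\alpha^{1/2}x$ and $\lambda_{\min}(\Lambda)z^Tz\le z^T\Lambda z\le\lambda_{\max}(\Lambda)z^Tz$, exactly as the paper does through its factorization of the quotient. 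No gaps.
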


\begin{proof}
  (1) For the diagonal matrix $\Xi_\alpha$, define $\Xi^{1/2}_\alpha := \mbox{diag}\big(\sqrt{\eta^{-1}_1}, \ldots,
\sqrt{\eta^{-1}_\ell} \big)$. Let $\sigma(A)$ denote the spectrum of a
square matrix $A$, i.e., the collection of all eigenvalues of $A$.
We thus have
\begin{eqnarray*}
 &&\lambda'\in \sigma(\Xi^{1/2}_\alpha F_\alpha \Lambda F^T_\alpha
   \Xi^{1/2}_\alpha )\\
    & \Longleftrightarrow  & \det \big(\lambda' I - \Xi^{1/2}_\alpha F_\alpha \Lambda F^T_\alpha
   \Xi^{1/2}_\alpha \big)=0 \\
   & \Longleftrightarrow & \det  ( \Xi^{1/2}_\alpha) \cdot \det \big(\Xi^{-1/2}_\alpha \cdot \lambda' \cdot \Xi^{-1/2}_\alpha  - F_\alpha \Lambda F^T_\alpha
  \big ) \cdot \det(\Xi^{1/2}_\alpha)=0 \\
   & \Longleftrightarrow & \det( \lambda' \cdot \Xi^{-1}_\alpha  - F_\alpha \Lambda F^T_\alpha
   ) = 0 \\
  &  \Longleftrightarrow &  \det( \lambda' I  - \Xi_\alpha F_\alpha \Lambda F^T_\alpha
   ) = 0 \\
   & \Longleftrightarrow & \lambda'\in \sigma( \Xi_\alpha F_\alpha \Lambda
   F^T_\alpha).
 \end{eqnarray*}
 Since $\Lambda$ is positive definite and $F_\alpha$ is row
 linearly independent, $\Xi^{1/2}_\alpha F_\alpha \Lambda F^T_\alpha
   \Xi^{1/2}_\alpha$ is positive definite such that all the
   eigenvalues of $\Xi_\alpha F_\alpha \Lambda
   F^T_\alpha$ are positive reals. By replacing $\Lambda$ by the
   identity matrix, we see that the same holds for the eigenvalues
   of $\Xi_\alpha F_\alpha F^T_\alpha $.

   (2) By Statement (1),  $\lambda_{\min} (\Xi_\alpha F_\alpha \Lambda
   F^T_\alpha  ) =\lambda_{\min}(\Xi^{1/2}_\alpha F_\alpha
\Lambda F^T_\alpha \Xi^{1/2}_\alpha )$ and $\lambda_{\max}
(\Xi_\alpha F_\alpha \Lambda  F^T_\alpha ) =
\lambda_{\max}(\Xi^{1/2}_\alpha F_\alpha \Lambda F^T_\alpha
\Xi^{1/2}_\alpha)$. Further, for any $x\ne 0$,
\[
  \frac{ x^T \Xi^{1/2}_\alpha F_\alpha \Lambda F^T_\alpha \Xi^{1/2}_\alpha x }{ x^T x }
  = \frac{ x^T \Xi^{1/2}_\alpha F_\alpha \Lambda F^T_\alpha \Xi^{1/2}_\alpha x }{ x^T \Xi^{1/2} F_\alpha  F^T_\alpha \Xi^{1/2} x }
     \cdot \frac{x^T \Xi^{1/2}_\alpha F_\alpha  F^T_\alpha \Xi^{1/2}_\alpha x
     }{ x^T x}.
\]
Therefore, using the fact that all the eigenvalues of $\Lambda$
are positive, we obtain
\begin{align*}
   \lambda_{\min}(\Lambda) \lambda_{\min}( \Xi^{1/2} F_\alpha  F^T_\alpha \Xi^{1/2} )
   & \le \lambda_{\min} ( \Xi^{1/2}_\alpha F_\alpha \Lambda F^T_\alpha
   \Xi^{1/2}_\alpha ) \le \lambda_{\max} ( \Xi^{1/2}_\alpha F_\alpha \Lambda F^T_\alpha
   \Xi^{1/2}_\alpha ) \\
   & \le \lambda_{\max}(\Lambda) \lambda_{\max}( \Xi^{1/2}_\alpha F_\alpha
   F^T_\alpha \Xi^{1/2}_\alpha )
\end{align*}
Since $\lambda_{\min}( \Xi^{1/2}_\alpha F_\alpha F^T_\alpha
\Xi^{1/2}_\alpha ) = \lambda_{\min}( \Xi_\alpha F_\alpha
F^T_\alpha)$ and $\lambda_{\max}( \Xi^{1/2}_\alpha F_\alpha
F^T_\alpha \Xi^{1/2}_\alpha ) = \lambda_{\max}( \Xi_\alpha
F_\alpha F^T_\alpha)$, the desired inequalities follow.
\end{proof}

The following proposition attains uniform upper and lower bounds for
the eigenvalues of $\Xi_\alpha F_\alpha F^T_\alpha$, regardless of
$K_n$ and $\alpha$.

\begin{proposition} \label{prop:F_eign_bound}
  For any $K_n$ and $\alpha$, $$ 1/3\le \lambda_{\min} ( \Xi_\alpha F_\alpha F^T_\alpha
   ) \le \lambda_{\max} ( \Xi_\alpha F_\alpha F^T_\alpha
    ) \le 1.$$
\end{proposition}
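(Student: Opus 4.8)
The plan is to analyze the (in general nonsymmetric) tridiagonal matrix $\Xi_\alpha G$, where $G:=F_\alpha F^T_\alpha$, using two facts already at hand: by Lemma~\ref{lem:row_sum} the $i$th row of $G$ sums to $\eta_i$, so the $i$th row of $\Xi_\alpha G$ sums to exactly $1$; and by Proposition~\ref{prop:F_eign}(1) every eigenvalue of $\Xi_\alpha F_\alpha F^T_\alpha=\Xi_\alpha G$ is a positive real. Since $G$ is entrywise nonnegative (Lemma~\ref{lem:F*Ftranspose}) and each $\eta_i>0$ (from the computation of the $\eta_i$ preceding Lemma~\ref{lem:row_sum}), $\Xi_\alpha G$ is a row-stochastic matrix. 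For the upper bound I would use that a nonnegative matrix with all row sums equal to $1$ has spectral radius exactly $1$: the all-ones vector $\mathbf{1}$ is a right eigenvector for the eigenvalue $1$, and $\rho(\Xi_\alpha G)\le\|\Xi_\alpha G\|_\infty=1$. Combined with positivity of the spectrum, this gives $\lambda_{\max}(\Xi_\alpha F_\alpha F^T_\alpha)=1\le 1$.

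For the lower bound I would invoke Gershgorin's circle theorem for $\Xi_\alpha G$. Writing $d_{ii}$ and $\wt\eta_i$ for the diagonal and off-diagonal entries of $G$ as in the proof of Lemma~\ref{lem:F*Ftranspose}, the $i$th Gershgorin disc of $\Xi_\alpha G$ is centered at $d_{ii}/\eta_i$ with radius $(|\wt\eta_{i-1}|+|\wt\eta_i|)/\eta_i$, with only one off-diagonal term at the two ends. Since $\lambda_{\min}(\Xi_\alpha G)$ is real and lies in some disc, $\lambda_{\min}(\Xi_\alpha G)\ge (d_{ii}-|\wt\eta_{i-1}|-|\wt\eta_i|)/\eta_i=\xi_i/\eta_i$ for that index $i$, where $\xi_i$ is the diagonal-dominance margin estimated in the proof of Lemma~\ref{lem:F*Ftranspose}. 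Hence it remains to verify $\xi_i/\eta_i\ge 1/3$ for every $i$.

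This last step is a short case analysis comparing the explicit lower bounds on $\xi_i$ (proof of Lemma~\ref{lem:F*Ftranspose}) with the explicit values of $\eta_i$ (listed just before Lemma~\ref{lem:row_sum}). If $m^\alpha_k=1$ then $\xi_i=\eta_i=1$. For an interior diagonal entry of a block of size $\ge 3$, the lower bound on $\xi_i$ and the exact value of $\eta_i$ equal, respectively, one sixth and one half of the same positive quantity, namely the sum of the two adjacent parameters $h^\alpha_{k,\cdot}$; hence $\xi_i/\eta_i\ge 1/3$ — this is the sharp case and the source of the constant $1/3$. For the two end diagonal entries of such a block, $\xi_i\ge\tfrac12+h^\alpha_{k,s}/6$ and $\eta_i=(1+h^\alpha_{k,s})/2$, so $\xi_i/\eta_i\ge\frac{3+h^\alpha_{k,s}}{3(1+h^\alpha_{k,s})}>\frac13$. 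Taking the minimum over $i$ yields $\lambda_{\min}(\Xi_\alpha F_\alpha F^T_\alpha)\ge 1/3$.

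I expect the only point requiring care to be that Gershgorin is applied to the nonsymmetric matrix $\Xi_\alpha G$, so the disc radii are its off-diagonal absolute row sums; it is precisely the positivity of the spectrum from Proposition~\ref{prop:F_eign}(1) that reduces the claim to the one-sided estimates $\xi_i/\eta_i\le\lambda\le\rho=1$. Everything else is bookkeeping already carried out in Lemmas~\ref{lem:F*Ftranspose}--\ref{lem:row_sum}, so no genuinely new estimate is needed.
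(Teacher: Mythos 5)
Your proposal is correct and follows essentially the same route as the paper: the upper bound comes from the unit row sums of $\Xi_\alpha F_\alpha F^T_\alpha$ together with the bound of the spectral radius by the maximum absolute row sum, and the lower bound comes from Gershgorin's theorem applied to $\Xi_\alpha F_\alpha F^T_\alpha$ combined with the verification $\xi_i/\eta_i\ge 1/3$ using the bounds from Lemma~\ref{lem:F*Ftranspose} and the values of $\eta_i$. Your explicit remarks (the all-ones right eigenvector and the role of the real positive spectrum from Proposition~\ref{prop:F_eign}(1)) only make precise what the paper leaves implicit.
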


\begin{proof}
 (1) Uniform upper bound.
%
%
  By Lemma~\ref{lem:row_sum},
  we have $\sum^\ell_{k=1} | (F_\alpha F^T_\alpha)_{jk}| = \sum^\ell_{k=1} G_{jk} = \eta_j
  $.
  Hence, $\sum^\ell_{k=1} (\Xi_\alpha F_\alpha
  F^T_\alpha)_{jk}=1$ for all $j=1, \ldots, \ell$. It follows from
  \cite[Corollary 6.1.5]{HornJohson_book85} that $\lambda_{\max} ( \Xi_\alpha F_\alpha F^T_\alpha
    ) \le 1$.

(2) Uniform lower bound. To establish this bound, we exploit
Gersgorin's Disc Theorem, say \cite[Theorem
6.1.1]{HornJohson_book85}. Notice that each $\xi_i$ defined in
Lemma~\ref{lem:F*Ftranspose} is the difference between the $i$th
diagonal of $G:=F_\alpha F^T_\alpha$ and the deleted absolute
 row sum of the $i$th row of $G$. Hence, by  Gersgorin's Disc Theorem, we see
that $\lambda_{\min} (\Xi_\alpha F_\alpha F^T_\alpha) \ge \min_{i}
(\xi_i/\eta_i)$. Further, using the lower bound of $\xi_i$ given
in Lemma~\ref{lem:F*Ftranspose} and the equality for $\eta_i$
given before Lemma~\ref{lem:row_sum}, it is easy to verify
$\xi_i/\eta_i \ge 1/3$ for all $i$, $\alpha$ and $K_n$. This
yields the desired uniform lower bound.
\end{proof}

%
%

The next result establishes a uniform bound on the
$\ell_\infty$-norm of $F^T_\alpha (F_\alpha \Lambda
F^T_\alpha)^{-1} F_\alpha $.

\begin{proposition} \label{prop:Linf_norm}
 There exists $c_{\infty, p}>0$ (dependent on $p$ only) such that for any $K_n$ and $\alpha$,
   $ \| F^T_\alpha (F_\alpha \Lambda F^T_\alpha)^{-1}  F_\alpha \|_\infty \le c_{\infty, p}$.
\end{proposition}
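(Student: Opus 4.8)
The plan is to reduce the claim to a bound on the $\ell_\infty$-norm of the inverse of a single, symmetrized and $\Xi_\alpha$-rescaled version of $F_\alpha\Lambda F^T_\alpha$, for which the spectral control from Propositions~\ref{prop:F_eign}--\ref{prop:F_eign_bound} is available, and then to combine bandedness with an exponential-decay estimate for inverses of band matrices.

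First I would record two elementary facts about $F_\alpha$ that follow directly from its block form (\ref{eqn:F_alp})--(\ref{eqn:F_k}): every column of $F_\alpha$ sums to $1$ (the pair $\mathbf h^{\alpha,k,j},\wt{\mathbf h}^{\alpha,k,j}$ contributes $\tfrac{h-m}{h}+\tfrac{m}{h}=1$ in each shared column, and the structural $1$'s stand alone), while the $i$th row of $F_\alpha$ sums to $\eta_i$ (the values listed before Lemma~\ref{lem:row_sum}). Hence $\|F_\alpha^T\|_\infty=1$ and $\|\Xi_\alpha F_\alpha\|_\infty=1$, with $\Xi_\alpha$ as in (\ref{eqn:Xi}). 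Writing $S:=F_\alpha\Lambda F^T_\alpha$ and inserting $\Xi_\alpha^{-1}\Xi_\alpha$, one obtains
\[
 F_\alpha^T S^{-1}F_\alpha \;=\; F_\alpha^T\,(\Xi_\alpha S)^{-1}\,(\Xi_\alpha F_\alpha),
\]
so that $\|F_\alpha^T(F_\alpha\Lambda F^T_\alpha)^{-1}F_\alpha\|_\infty\le\|(\Xi_\alpha S)^{-1}\|_\infty$. It therefore suffices to bound $\|(\Xi_\alpha F_\alpha\Lambda F^T_\alpha)^{-1}\|_\infty$ by a constant depending on $p$ only.

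Next I would assemble the structural ingredients for a decay estimate. \emph{Bandedness:} since $\Lambda$ is $(2p+1)$-banded and the supports of the ``tent'' rows of $F_\alpha$ are nested intervals whose interiors become physically separated as the row index varies (within a block the gaps $h^\alpha_{k,l}\ge 2$ accumulate, and across blocks the supports are disjoint), the matrix $S=F_\alpha\Lambda F^T_\alpha$, and hence $\Xi_\alpha S$ and its symmetrization $T_\alpha:=\Xi_\alpha^{1/2}F_\alpha\Lambda F^T_\alpha\Xi_\alpha^{1/2}$, is banded with bandwidth bounded by a constant depending on $p$ only; moreover the entry coupling two tents of indices $i,j$ is controlled by the width of the overlap of their supports, i.e.\ by $\min(\eta_i,\eta_j)$ up to a $p$-constant. \emph{Conditioning:} by Propositions~\ref{prop:F_eign}--\ref{prop:F_eign_bound} the eigenvalues of $\Xi_\alpha F_\alpha\Lambda F^T_\alpha$ (equivalently of the symmetric positive definite $T_\alpha$) lie in $[\tfrac13\lambda_{\min}(\Lambda),\lambda_{\max}(\Lambda)]$, and for equally spaced knots the extreme eigenvalues of the normalized B-spline Gram matrix $\Lambda$ are bounded away from $0$ and $\infty$ by constants depending on $p$ only (standard B-spline stability, consistent with (\ref{eqn:beta_n})). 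Thus $T_\alpha$ is symmetric positive definite, banded, and uniformly well-conditioned, all with $p$-only constants, so a Demko--Moss--Smith-type bound gives $|(T_\alpha^{-1})_{ij}|\le K_0\lambda_0^{|i-j|}$ with $K_0>0$, $\lambda_0\in(0,1)$ depending on $p$ only.

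Transferring this back, $(\Xi_\alpha S)^{-1}=\Xi_\alpha^{1/2}T_\alpha^{-1}\Xi_\alpha^{-1/2}$, so $\big((\Xi_\alpha S)^{-1}\big)_{ij}=\eta_i^{-1/2}(T_\alpha^{-1})_{ij}\eta_j^{1/2}$ and
\[
 \|(\Xi_\alpha S)^{-1}\|_\infty \;\le\; \max_i\ \eta_i^{-1/2}\sum_j \eta_j^{1/2}\,|(T_\alpha^{-1})_{ij}|.
\]
The hard part is exactly this last step: the plain decay bound is \emph{not} enough, because the weight ratio $(\eta_j/\eta_i)^{1/2}$ can be as large as $\sqrt{K_n}$ even for $|i-j|$ small (a narrow tent sitting next to a tent spanning a long affine run). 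The resolution must use the refinement noted above: when $\eta_j\gg\eta_i$ the corresponding entry of $T_\alpha$ carries a compensating factor of order $(\eta_i/\eta_j)^{1/2}$, and one has to propagate this into the inverse to get a \emph{weighted} decay estimate $|(T_\alpha^{-1})_{ij}|\le K_0'\lambda_0^{|i-j|}(\eta_i/\eta_j)^{1/2}$, which turns the displayed sum into a geometric series bounded by a $p$-only constant. Equivalently, this amounts to proving that the $\Lambda$-orthogonal projection onto the subspace of spline-coefficient vectors that are affine on the $\alpha$-runs is bounded $\ell_\infty\!\to\!\ell_\infty$, uniformly in the run pattern and in $K_n$; I expect essentially all the work to sit here, and this is where the argument genuinely exploits the convex shape constraint. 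Taking $c_{\infty,p}$ to be the resulting constant completes the proof.
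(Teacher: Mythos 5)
Your opening reduction is exactly the paper's: insert $\Xi_\alpha$, use $\|F_\alpha^T\|_\infty=\|\Xi_\alpha F_\alpha\|_\infty=1$, and reduce to bounding $\|(\Xi_\alpha F_\alpha\Lambda F_\alpha^T)^{-1}\|_\infty$. The problem is what comes after. You pass to the symmetrization $T_\alpha=\Xi_\alpha^{1/2}F_\alpha\Lambda F_\alpha^T\Xi_\alpha^{1/2}$, obtain an unweighted Demko-type decay $|(T_\alpha^{-1})_{ij}|\le K_0\lambda_0^{|i-j|}$, and then discover that transferring back through $(\Xi_\alpha S)^{-1}=\Xi_\alpha^{1/2}T_\alpha^{-1}\Xi_\alpha^{-1/2}$ introduces weight ratios $(\eta_j/\eta_i)^{1/2}$ that can be as large as $\sqrt{K_n}$. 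Your proposed cure is a \emph{weighted} decay estimate $|(T_\alpha^{-1})_{ij}|\le K_0'\lambda_0^{|i-j|}(\eta_i/\eta_j)^{1/2}$, which you explicitly do not prove ("I expect essentially all the work to sit here"). That is a genuine gap, and it is the entire content of the proposition along your route: an entrywise compensating factor in $T_\alpha$ (even granting your unproved claim that the $(i,j)$ entry is controlled by the overlap width $\min(\eta_i,\eta_j)$) does not propagate through matrix inversion by any standard argument, and the Demko/Demko--Moss--Smith theorems only give decay in $|i-j|$ with constants depending on bandwidth and condition number, never weighted decay. So as written the proof is incomplete precisely at the step you flag.

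For comparison, the paper never symmetrizes at this stage and therefore never faces the weight transfer. It applies Demko's band-inverse theorem directly to $H:=\Xi_\alpha F_\alpha\Lambda F_\alpha^T$: $H$ is banded with bandwidth depending on $p$ only, its eigenvalues coincide with those of $T_\alpha$ (Proposition~\ref{prop:F_eign}), and Proposition~\ref{prop:F_eign_bound} together with the uniform spectral bounds $\ul\mu_p\le\lambda_{\min}(\Lambda)\le\lambda_{\max}(\Lambda)\le\ol\mu_p$ of Zhou, Shen and Wolfe gives $\|H\|_2\le\ol\mu_p$ and $\|H^{-1}\|_2\le 3/\ul\mu_p$; Demko then bounds the $\ell_1$-norm of every row of $H^{-1}$, i.e.\ $\|H^{-1}\|_\infty\le c'$, uniformly in $K_n$ and $\alpha$, with a padding trick $\mathrm{diag}(H,1,\ldots,1)$ for the short case $\ell<2(p+1)$. (One caveat you should be aware of if you repair your argument this way: $H$ is not symmetric, so identifying $\|H\|_2$ and $\|H^{-1}\|_2$ with the extreme eigenvalues, as the paper does, itself needs justification beyond the similarity to $T_\alpha$; but the structural point stands that the decay estimate should be run on $H$, or on some object whose $\ell_\infty$ bound transfers without the $\Xi_\alpha^{\pm 1/2}$ conjugation, rather than on $T_\alpha$.) Without either that direct route or a proof of your weighted decay bound, the proposal does not establish the proposition.
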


\begin{proof}
  For any $K_n$ and any $\alpha$, let $\Xi_\alpha$ be that defined
  in (\ref{eqn:Xi}). Then
  \begin{align*}
   \| F^T_\alpha (F_\alpha \Lambda F^T_\alpha)^{-1} F_\alpha\|_\infty &= \| F^T_\alpha \cdot
   (\Xi_\alpha F_\alpha \Lambda F^T_\alpha)^{-1} \cdot (\Xi_\alpha F_\alpha)\|_\infty \\
   &\le \| F^T_\alpha\|_\infty \cdot
     \| (\Xi_\alpha F_\alpha \Lambda F^T_\alpha)^{-1}\|_\infty \cdot \|\Xi_\alpha F_\alpha
    \|_\infty.
   \end{align*}
    It is easy to verify $\| F^T_\alpha\|_\infty=1$. Furthermore,
    due to the definition of $\Xi_\alpha$, we have $\|\Xi_\alpha F_\alpha
    \|_\infty=1$. In what follows, we show that $\| (\Xi_\alpha F_\alpha \Lambda
    F^T_\alpha)^{-1}\|_\infty$ is uniformly bounded using the
    banded structure of  the matrix and other technical results developed
    before. This will give rise to a uniform bound.

    Let $F_\alpha$ be of $\ell$ rows. We consider two cases as
    follows:

   (i) $\ell \ge 2 (p+1)$. In this case,
    by the structure of $F_\alpha$ shown in (\ref{eqn:F_alp}), we see
    via straightforward computation that $H:=\Xi_\alpha F_\alpha \Lambda
    F^T_\alpha $ is a banded symmetric matrix with bandwidth $p$, i.e., $(H)_{ij}=0$ whenever
    $|i-j|>p$.
    It is known from \cite[Lemma 6.2]{zhou_98} that for a
fixed spline degree $p$, there exist positive constants
${\ul\mu}_p$ and $\ol\mu_p$ (dependent on $p$ only) such that $\ul\mu_p \le
\lambda_{\min}(\Lambda) \le \lambda_{\max}(\Lambda) \le \ol\mu_p$
for any $K_n$.
    It  thus follows from Propositions~\ref{prop:F_eign} and \ref{prop:F_eign_bound} that
    $\|H \|_2 =\lambda_{\max}(H)\le \ol\mu_p$, where $\ol\mu_p$ is
    independent of $K_n$ and $\alpha$. Similarly, $\| H^{-1} \|_2
    \le 1/\lambda_{\min}(H) \le 3/\ul\mu_p$. Hence, for $F_\alpha$ with $\ell\ge 2(p+1)$,
    it follows from \cite[Theorem 2.2]{Demko_siam77} that there exists
    $c'>0$ (independent of $K_n$ and $\alpha$) such that $\|(H^{-1})_{i\bullet}\|_1 \le
    c'$ for all $i=1, \ldots, \ell$, where
    $(H^{-1})_{i\bullet}$ denotes the $i$th row of $H^{-1}$. In
    other words, $\| H^{-1}\|_\infty \le c'$.

    (ii) $\ell<2(p+1)$.
    For any $F_\alpha$ in this case, we introduce the block diagonal matrix
    $\wt H:=\mbox{diag}( H, 1, \ldots, 1)$ such that $H'$ has $2(p+1)$
    rows. Hence, $\wt H$ is a banded symmetric matrix with  bandwidth $p$
    and satisfies
     $\| \wt H \|_2 \le \max(\ol\mu_p, 1)$, $\|\wt
    H^{-1}\|_2 \le \max(3/\ul\mu_p, 1)$. Thus there exists $c''>0$ (independent of $K_n$ and $\alpha$)
    such that $\| H^{-1} \|_\infty \le \| \wt H^{-1}  \|_\infty \le c''$.

    Consequently,
    $c_{\infty, p}:=\max(c', c'')$ is the desired uniform bound with respect to the $\ell_\infty$-norm.
\end{proof}

Along with the above results, we finally complete the proof of the
uniform Lipschitz property below.

\begin{proof}[Proof of Theorem~\ref{theorem:uniform_Lip}]
   The uniform bound on   $\| F^T_\alpha (F_\alpha \Lambda F^T_\alpha)^{-1}
    F_\alpha \|_\infty $ has been established in 
    Proposition \ref{prop:Linf_norm}. The second statement follows directly
     from the continuous and piecewise
    linear property of $\hat b$ and polyhedral theory \cite[Proposition 4.2.2]{FPang03_book}.
\end{proof}


\section{Proof of Theorem \ref{thm:convergence}}

We introduce some notation first.
Let $\bar f^{[p]}$ be the spline estimator based on noise free data, i.e.,  $\bar f^{[p]}(x) = \sum_{k=1}^{K_n+p} \bar b_k B_k^{[p]}(x)$, where
\begin{equation} \label{eqn:optimal_b_y}
  \bar b \, := \, \arg\min_{b\in \Omega} {1\over 2} b^T \Lambda b - b^T \mathbb E(\bar y).
\end{equation}

%
%

Propositions \ref{lem:bias} and \ref{lem:random}  below give rise to uniform bounds for the bias and stochastic terms of estimation error in the sup-norm, respectively.

\begin{proposition} \label{lem:bias}
If $1\le r'\le r$, there exists a constant $C_{1r'}$, which depends on $r'$ only, such that
\begin{equation}\label{equ:bias}
\sup_{f\in {\cal C}(r, L)}\|\bar f_{(r')} - f\|_\infty \le C_{1r'}\cdot L \cdot K_n^{-r}.
\end{equation}
In particular, if $r=2$, then $C_{1 r'}$ is independent of $r'$.
\end{proposition}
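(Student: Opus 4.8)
The plan is to compare the optimal noise‑free coefficient vector $\bar b$ with the coefficient vector $\tilde b$ of a carefully chosen \emph{convex} spline approximant of $f$, and then invoke the uniform Lipschitz property of Theorem~\ref{theorem:uniform_Lip}. Write $p=\lceil r'-1\rceil$ for $r'>1$ and $p=1$ for $r'=1$, put $\vec f:=(f(x_1),\dots,f(x_n))^T$, and note from (\ref{eqn:optimal_b_y}) that, since $\mathbb E(\bar y)=X^T\vec f/\beta_n$, one has $\bar b=\hat b(X^T\vec f/\beta_n)$ in the piecewise linear notation of Section~\ref{subsect:PL_form}. The proof then splits into an \emph{approximation step} — exhibiting $\tilde b\in\Omega$ whose spline $\tilde f:=\sum_k\tilde b_kB^{[p]}_k$ satisfies $\|\tilde f-f\|_\infty\le C\,L\,K_n^{-r}$ — and a routine \emph{transfer step}.

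For the transfer step, observe that since $\tilde b\in\Omega$ and $\Lambda$ is positive definite, minimizing $\tfrac12 b^T\Lambda b-b^T\Lambda\tilde b=\tfrac12(b-\tilde b)^T\Lambda(b-\tilde b)+\text{const}$ over $\Omega$ returns $\tilde b$ itself; as $X^T\vec{\tilde f}/\beta_n=\Lambda\tilde b$ with $\vec{\tilde f}=(\tilde f(x_i))_i=X\tilde b$, this says $\tilde b=\hat b(X^T\vec{\tilde f}/\beta_n)$. Theorem~\ref{theorem:uniform_Lip}(2) then gives $\|\bar b-\tilde b\|_\infty\le c_{\infty,p}\,\|X^T(\vec f-\vec{\tilde f})/\beta_n\|_\infty$. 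The $k$‑th coordinate of $X^T(\vec f-\vec{\tilde f})$ equals $\sum_i B^{[p]}_k(x_i)\big(f(x_i)-\tilde f(x_i)\big)$, whose absolute value is at most $\|f-\tilde f\|_\infty\sum_iB^{[p]}_k(x_i)\le C_p(n/K_n)\|f-\tilde f\|_\infty$, because $B^{[p]}_k\in[0,1]$ is supported on an interval carrying $O(n/K_n)$ design points; together with (\ref{eqn:beta_n}) this yields $\|X^T(\vec f-\vec{\tilde f})/\beta_n\|_\infty\le (C_p/C_{\beta,p})\|f-\tilde f\|_\infty$. Finally, since the $B^{[p]}_k$ form a nonnegative partition of unity on $[0,1]$, $\|\bar f_{(r')}-\tilde f\|_\infty=\big\|\sum_k(\bar b_k-\tilde b_k)B^{[p]}_k\big\|_\infty\le\|\bar b-\tilde b\|_\infty$, and the triangle inequality gives $\|\bar f_{(r')}-f\|_\infty\le\big(1+c_{\infty,p}C_p/C_{\beta,p}\big)\|\tilde f-f\|_\infty$, reducing everything to the approximation step.

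For the approximation step with $p=1$ — which occurs whenever $1\le r'\le 2$, in particular for every $r$ with $1\le r\le 2$ — I would take $\tilde f$ to be the piecewise linear spline interpolating $f$ at the knots $\kappa_0,\dots,\kappa_{K_n}$. Up to boundary bookkeeping its linear B‑spline coefficients are the values $f(\kappa_j)$, so $\Delta^2\tilde b\ge 0$ is precisely the three‑point convexity inequality defining $\mathcal C$; hence $\tilde b\in\Omega$. The elementary interpolation estimate on each mesh interval of length $K_n^{-1}$, applied with $f'\in\mathrm{Lip}_{r-1}(L)$ (and with plain Lipschitz continuity of $f$ when $r=1$), gives $\|\tilde f-f\|_\infty\le C\,L\,K_n^{-r}$ with an absolute constant; at $r=2$ it is $\tfrac18 L K_n^{-2}$ since $\|f''\|_\infty\le L$, which together with $p=1$ accounts for the stated $r'$‑independence. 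For $p\ge 2$ I would instead invoke a known convexity‑preserving B‑spline quasi‑interpolant to produce $\tilde b\in\Omega$ with $\|\tilde f-f\|_\infty=O\big(L K_n^{-\min(r,p+1)}\big)$.

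The heart of the matter, and the only place the shape constraint is genuinely used, is the approximation step: one must match the full rate $K_n^{-r}$ by a spline whose \emph{coefficient vector} — not merely the function itself — lies in $\Omega$. For $p=1$ this is automatic, so the remaining care is cosmetic (a clean constant, and the truncated boundary B‑splines handled without loss of rate). The transfer step, by contrast, is immediate once Theorem~\ref{theorem:uniform_Lip} is available, and this is exactly where the $\ell_\infty$‑ rather than $\ell_2$‑version of the Lipschitz property is essential: the data‑space perturbation $X^T(\vec f-\vec{\tilde f})/\beta_n$ is $O(\|f-\tilde f\|_\infty)$ coordinatewise, and the $\ell_\infty$ bound converts this directly into sup‑norm control of $\bar b-\tilde b$ with no factor growing in $K_n$.
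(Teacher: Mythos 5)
For the core case $1\le r\le 2$ (so $p=1$), your argument is essentially the paper's own proof: take the piecewise-linear interpolant $\tilde f$ of $f$ at the knots, note that its coefficient vector lies in $\Omega$ and is exactly reproduced by the noise-free constrained problem (\ref{eqn:optimal_b_y}) with data $X^T\vec{\tilde f}/\beta_n$, transfer the error through Theorem~\ref{theorem:uniform_Lip} via $\|X^T(\vec f-\vec{\tilde f})\|_\infty/\beta_n=O(\|f-\tilde f\|_\infty)$, and finish with the partition-of-unity bound, the triangle inequality, and the elementary estimate $\|\tilde f-f\|_\infty\le L K_n^{-r}$. The paper packages the data-space bound as $\varrho=\sum_i B_2^{[1]}(x_i)/\sum_i B_2^{[1]}(x_i)^2$ instead of your support count combined with (\ref{eqn:beta_n}), but this is the same estimate, and your explanation of the $r=2$ clause (constant tied to $p=1$ only) matches the paper's.

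The genuine divergence, and the one real gap, is the case $r'>2$ (spline degree $p\ge 2$). The paper does not attempt a shape-constrained approximant there: it argues that for $r'>2$ the constrained and unconstrained estimators coincide asymptotically (Theorem~\ref{thm:uncon}) and then quotes the unconstrained spline approximation theory of \cite{zhou_98}. Your substitute — ``a known convexity-preserving B-spline quasi-interpolant'' producing $\tilde b\in\Omega$ at rate $K_n^{-\min(r,p+1)}$ — cannot simply be cited: for $p\ge 2$, the requirement $\Delta^2\tilde b\ge 0$ on the \emph{coefficients} is strictly stronger than convexity of the spline function itself (the implication only goes from coefficient convexity to function convexity), and standard shape-preserving quasi-interpolants guarantee a convex spline, not a coefficient vector in $\Omega$; the ones that do (e.g.\ the variation-diminishing Schoenberg operator) only achieve second-order accuracy. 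Moreover, when $r'<r$ your claimed rate $K_n^{-\min(r,p+1)}$ can fall short of the stated $K_n^{-r}$. So your transfer step is fine in general, but for $p\ge2$ the approximation step as sketched would not go through; the paper sidesteps exactly this difficulty by routing the high-smoothness case through Theorem~\ref{thm:uncon} rather than constructing a constrained approximant.
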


\begin{proof}
Consider the case when $1\le r\le 2$ first. Hence $\lceil r'-1\rceil = 1$. Let $\tilde f$ be a piecewise linear function such that $\tilde f(\kappa_k)=f(\kappa_k)$. For any $x\in [\kappa_{k-1}, \kappa_k]$, $k=1, \ldots, K_n$, there exist $\xi_x, \tilde \xi_x \in (\kappa_{k-1}, \kappa_k)$ such that
\begin{align*}
&\tilde f(x) - f(x) \\
=& \tilde f(\kappa_{k-1}) + K_n \big (\tilde f(\kappa_k) - \tilde f(\kappa_{k-1}) \big)(x-\kappa_{k-1}) - \big[ f(\kappa_{k-1})+f'(\xi_x)(x-\kappa_{k-1})\big] \\
=& \big[ f'(\tilde\xi_x) - f'(\xi_x) \big] (x-\kappa_{k-1})\le  L |\tilde\xi_x -\xi_x |^{\ell} |x-\kappa_{k-1}| \le L K_n^{-r}.
\end{align*}
Thus $\|\tilde f - f\|_\infty \le L K_n^{-r}$.
Let  $\vec f:=(f(x_1), \ldots, f(x_n))^T$, $\vec{\tilde f}:=(\tilde f(x_1), \ldots, \tilde f(x_n))^T$, and let
$\tilde b$ be the optimal solution of (\ref{eqn:optimal_b_y}) with $\mathbb E(\bar y)$ replaced by $X^T\vec{\tilde f} /\beta_n$. Since $\tilde f$ is a piecewise linear and convex function, we have $\tilde b =(\tilde f(\kappa_1), \ldots, \tilde f(\kappa_{K_n}) )^T$. It follows from Theorem ~\ref{theorem:uniform_Lip} that
\begin{eqnarray*}
\|\bar f^{[1]} - \tilde f\|_\infty & \le & \|\bar b - \tilde b\|_\infty \le {c_{\infty, 1}\over\beta_n} \|X^T(\vec f - \vec{\tilde f})\|_\infty \\
&\le & { c_{\infty,1}\over\beta_n} \|X^T\|_\infty \|f - \tilde f\|_\infty = c_{\infty,1} \varrho \|f - \tilde f\|_\infty,
\end{eqnarray*}
where $\|X^T\|_\infty = \sum_{i=1}^n B_{2}^{[1]}(x_i)$ and $\varrho: =\sum_{i=1}^n B_{2}^{[1]}(x_i)/\sum_{i=1}^n B_{2}^{[1]}(x_i)^2$.
Therefore, letting $C_{1 r'}:=c_{\infty, 1} \varrho$ which is independent  of $r'$, we have
\[
\|\bar f ^{[1]}- f\|_\infty \le (1+c_{\infty,1} \varrho)\|f - \tilde f\|_\infty \le (1+c_{\infty,1} \varrho) L K_n^{-r} = C_{1 r'} L K^{-r}_n.
\]
Next, consider the case when $r>2$. If $r'\le 2$, a similar argument as above yields (\ref{equ:bias}). If $r'>2$, it is shown in Theorem \ref{thm:uncon} that an unconstrained estimator and the constrained one are asymptotically equivalent. Since (\ref{equ:bias}) holds for the unconstrained estimator \cite{zhou_98}, the proof is complete.
\end{proof}

\begin{proposition} \label{lem:random}
There exists a positive constant $C_{2r}$, which depends on $r$ only, such that for any $u>0$,
\begin{align}
P\Big( \|\hat f_{(r)} - \bar f_{(r)}\|_\infty \ge u \Big) \le (K_n+p)\exp\Big\{ -{n\over 2 K_n C_{2r}^2\sigma^2}u^2 \Big\}.
\end{align}
In particular, if $r\in [1, 2]$, then $C_{2 r}$ is independent of $r$.
\end{proposition}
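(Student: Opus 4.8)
The plan is to decompose the stochastic fluctuation $\hat f_{(r)} - \bar f_{(r)}$ into the difference of spline coefficients evaluated at the noisy data $\bar y = X^Ty/\beta_n$ versus the noise-free mean $\mathbb E(\bar y)$, and then control this difference coefficient-wise using the uniform Lipschitz property of Theorem~\ref{theorem:uniform_Lip}. Specifically, since $\hat f_{(r)}(x) - \bar f_{(r)}(x) = N^T(x)\big(\hat b(\bar y) - \bar b\big)$ with $\bar b = \hat b(\mathbb E(\bar y))$, and since the B-spline basis functions satisfy $\sum_k B_k^{[p]}(x) = 1$ with $B_k^{[p]}(x)\ge 0$, we have $\|\hat f_{(r)} - \bar f_{(r)}\|_\infty \le \|\hat b(\bar y) - \bar b\|_\infty$. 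By Theorem~\ref{theorem:uniform_Lip}(2), this is bounded by $c_{\infty,p}\|\bar y - \mathbb E(\bar y)\|_\infty = c_{\infty,p}\beta_n^{-1}\|X^T(y - \mathbb E y)\|_\infty = c_{\infty,p}\beta_n^{-1}\sigma\|X^T\epsilon\|_\infty$, where $\epsilon = (\epsilon_1,\ldots,\epsilon_n)^T$ is the standard normal noise vector.

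Next I would analyze the components of $X^T\epsilon$. Each component is $(X^T\epsilon)_k = \sum_{i=1}^n B_k^{[p]}(x_i)\epsilon_i$, which is a centered Gaussian random variable with variance $\sum_{i=1}^n (B_k^{[p]}(x_i))^2 = \beta_n$ for the interior indices (and bounded by $\beta_n$ for the $O(p)$ boundary indices, after absorbing a $p$-dependent constant). Hence each $(X^T\epsilon)_k/\sqrt{\beta_n}$ is (sub-)Gaussian with variance at most a constant, so $\beta_n^{-1}(X^T\epsilon)_k$ is Gaussian with standard deviation of order $\beta_n^{-1/2}$. Using $\beta_n \ge C_{\beta,p}\, n/K_n$ from \eqref{eqn:beta_n}, the standard deviation is at most a constant multiple of $\sqrt{K_n/n}$. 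A standard Gaussian tail bound gives $P\big(|\beta_n^{-1}(X^T\epsilon)_k| \ge t\big) \le 2\exp\{-\beta_n t^2/(2c)\}$ for an appropriate $p$-dependent constant $c$, and then a union bound over the $K_n+p$ coordinates yields $P\big(\|\bar y - \mathbb E\bar y\|_\infty \ge t\big) \le (K_n+p)\exp\{-n t^2/(2 c' K_n \sigma^2)\}$ after restoring the $\sigma$ factor. Setting $t = u/c_{\infty,p}$ and absorbing $c_{\infty,p}$ and $c'$ into a single constant $C_{2r}^2$ gives the claimed bound; the independence of $C_{2r}$ from $r$ on $[1,2]$ follows because there $p = \lceil r-1\rceil = 1$ is fixed, so all constants depend only on $p=1$.

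The main obstacle is the careful bookkeeping of the boundary B-spline bases: the quantity $\beta_n$ is defined only for the interior indices $k = p+1,\ldots,K_n$, where each basis function integrates a full translate's worth of mass, whereas the first and last $p$ basis functions have truncated support and hence smaller (but still comparable up to a $p$-dependent constant) $L^2$-mass $\sum_i (B_k^{[p]}(x_i))^2$. One must verify that these boundary variances are bounded above by a $p$-dependent multiple of $\beta_n$ (which holds since $\beta_n \asymp n/K_n$ and each boundary term is also $O(n/K_n)$), so that the sub-Gaussian parameter is uniformly controlled across all $K_n+p$ coordinates. A secondary subtlety is making sure the same uniform constant $c_{\infty,p}$ from Theorem~\ref{theorem:uniform_Lip} applies regardless of which index set $\alpha = \alpha(y)$ is active — but this is exactly the content of that theorem, so invoking it directly resolves the issue. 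Once these points are settled, the rest is a routine Gaussian maximal inequality.
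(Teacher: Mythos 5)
Your proposal is correct and follows essentially the same route as the paper's proof: bound $\|\hat f_{(r)}-\bar f_{(r)}\|_\infty$ by $c_{\infty,p}\|\bar y-\mathbb E(\bar y)\|_\infty$ via Theorem~\ref{theorem:uniform_Lip}, note each coordinate of $X^T\epsilon/\beta_n$ is Gaussian with standard deviation $O(\sqrt{K_n/n})$ using \eqref{eqn:beta_n}, and finish with a Gaussian tail bound plus a union bound over the $K_n+p$ coordinates, with $C_{2r}$ depending only on $p=\lceil r-1\rceil$. Your explicit check that the boundary basis functions have $\sum_i(B_k^{[p]}(x_i))^2$ bounded by a $p$-dependent multiple of $\beta_n$ is a detail the paper's proof glosses over, and it is handled correctly.
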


\begin{proof}
Recall $p =\lceil r-1\rceil$. By Theorem \ref{theorem:uniform_Lip} and (\ref{eqn:beta_n}), we have
%
%
%
$$\|\hat f_{(r)} - \bar f_{(r)}\|_\infty \le {\sigma c_{\infty,p}\over \sqrt{\beta_n}}\sup_{k=1, \ldots, K_n+p} |\xi_k| = {\sigma c_{\infty,p}\over \sqrt{C_{\beta,p}}}\sqrt{K_n\over n}\sup_{k=1, \ldots, K_n+p} |\xi_k|,$$
where $\xi_k = \sum_{i=1}^n B_k^{[p]}(x_i)\epsilon_i/\sqrt{\beta_n}$. Letting $C_{2r}: = c_{\infty, p}/\sqrt{C_{\beta,p}}$ which is dependent on $r$ only (but independent of $r$ if $r\in[1,2]$), we have
$$
\displaystyle \|\hat f_{(r)} - \bar f_{(r)}\|_\infty \, \le \,  \sigma  C_{2r}\sqrt{K_n\over n}~ \tilde \Gamma_r,
$$
where $ \tilde\Gamma_r = \max_{k=1, \ldots, K_{n+p}}|\xi_k|$. Hence,
by using the implication:
 $ Z\sim N(0, 1) \Longrightarrow P(Z>t)\le {1\over 2}e^{-t^2/2}, \forall \ t \ge 0$, we have
\begin{align*}
& P\Big( \|\hat f_{(r)} - \bar f_{(r)}\|_\infty \ge u \Big)   \le P\Big( \tilde\Gamma_r \ge {u\over C_{2r}\sigma}\sqrt{n\over K_n}    \Big)\\
\le & (K_n+p) P\Big\{|\xi_k|\ge { u\over C_{2r}\sigma}\sqrt{n\over K_n}  \Big\}\le(K_n+p) \exp\Big\{-{n\over 2 K_n C_{2r}^2\sigma^2}u^2 \Big\}.
\end{align*}
\end{proof}

Let $$\displaystyle T_n := C_{2r}\sigma \sqrt{2\over 2r+1} \sqrt{\log n\over n} K_n^{1\over 2}.$$ It follows from Proposition \ref{lem:random} that,
\begin{align*}
\mathbb E\Big(\|\hat f_{(r)} - \bar f_{(r)}\|_\infty\Big)&\le T_n + \int_{T_n}^\infty P\Big( \|\hat f_{(r)} - \bar f_{(r)}\|_\infty>t   \Big)dt\\
&\le T_n + \int_{T_n}^\infty (K_n+p)\exp\Big\{ -{n\over 2K_n C_{2r}^2\sigma^2} t^2 \Big\}dt\\
& \le T_n + \sqrt{\pi\over 2}~ C_{2r}\sigma \sqrt{n^{-1}K_n}~ (K_n+p) n^{-{1\over 2r+1}} 
\\
& = O(T_n).
\end{align*}
In view of Proposition \ref{lem:bias} and the above result, we deduce that
\begin{align*}
\mathbb E\|\hat f_{(r)} - f\|_\infty &\le \|\bar f_{(r)} -   f \|_\infty + \mathbb E\|\hat f_{(r)} - \bar f_{(r)} \|_\infty=O\Big(L K_n^{-r} + \sigma \sqrt{\log n\over n} K_n^{1\over 2}\Big).
\end{align*}
This shows Statement (1) of Theorem \ref{thm:convergence} by using the optimal choice of $K_n$.



The next  proposition establishes uniform bounds for the stochastic estimation error for a fixed point as well as mean squared error.

\begin{proposition}\label{lem:pointwise}
For any given $x_0\in [0, 1]$, there exist two positive constants $C_{3r}$ and $C_{4r}$, which depend only on $r$, such that
\begin{eqnarray}
\mathbb E (|\hat f_{(r)}(x_0) - \bar f_{(r)}(x_0)|^2) & \le & C_{3r} \sigma^2 n^{-1}  K_n, \label{eqn:2nd_m} \\
\mathbb E( |\hat f_{(r)}(x_0) - \bar f_{(r)}(x_0)|^4) & \le & C_{4r} \sigma^4  n^{-2}  K_n^2. \label{eqn:4th_m}
\end{eqnarray}
Furthermore,
\begin{equation} \label{eqn:MSE}
    \mathbb E \big ( \big \|\hat f_{(r)} - \bar f_{(r)} \big \|^2_2 \big) :=  \mathbb E \big ( \int^1_0 \big | \hat f_{(r)}(x) - \bar f_{(r)}(x) \big |^2 dx \big) \, \le \, C_{3r} \sigma^2 n^{-1}  K_n.
\end{equation}
In particular, if $r\in[1, 2]$,  then $C_{3r}$ and $C_{4r}$ are independent of $r$.
\end{proposition}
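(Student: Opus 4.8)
The plan is to reduce all three displays to moment estimates for the coefficient deviation $\delta:=\hat b(\bar y)-\bar b$, where $\bar y=X^Ty/\beta_n$ and $\bar b$ is as in~\eqref{eqn:optimal_b_y}, and then to use Theorem~\ref{theorem:uniform_Lip} together with the piecewise linear representation of Proposition~\ref{lem:PL_formulation}. Since the $p$th degree B-splines are a partition of unity, $N(x)$ has at most $p+1$ nonzero entries and $\|N(x)\|_1=1$, so $|\hat f_{(r)}(x)-\bar f_{(r)}(x)|=|N^T(x)\delta|\le\max_{k\in I_x}|\delta_k|$ with $I_x:=\{k:B^{[p]}_k(x)\ne0\}$ of size $\le p+1$. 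Hence~\eqref{eqn:2nd_m} and~\eqref{eqn:4th_m} follow once I show $\mathbb{E}(\delta_k^2)\le C\sigma^2n^{-1}K_n$ and $\mathbb{E}(\delta_k^4)\le C\sigma^4n^{-2}K_n^2$ for a single coordinate $k$, with $C$ depending on $p$ only; while~\eqref{eqn:MSE} follows from $\mathbb{E}\|\delta\|_2^2\le C\sigma^2n^{-1}K_n^2$, because $\int_0^1|N^T(x)\delta|^2dx=\delta^T\Theta\,\delta$ with $\Theta:=\int_0^1N(x)N^T(x)\,dx$ a $(2p+1)$-banded Gram matrix satisfying $\lambda_{\max}(\Theta)\le C_p/K_n$.

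The $\ell_2$ estimate is the routine part. Completing the square in~\eqref{equ:bm2} shows $\hat b(\bar y)=\Pi^\Lambda_\Omega(\Lambda^{-1}\bar y)$, the projection of $\Lambda^{-1}\bar y$ onto the closed convex cone $\Omega$ in the inner product $\langle u,v\rangle_\Lambda=u^T\Lambda v$, and likewise $\bar b=\Pi^\Lambda_\Omega(\Lambda^{-1}\mathbb{E}\bar y)$. Nonexpansiveness of projections gives $\|\delta\|_\Lambda^2\le(\bar y-\mathbb{E}\bar y)^T\Lambda^{-1}(\bar y-\mathbb{E}\bar y)=\tfrac{\sigma^2}{\beta_n}\epsilon^TP_X\epsilon$, where $\bar y-\mathbb{E}\bar y=\sigma X^T\epsilon/\beta_n$ and $P_X$ is the orthogonal projection onto the column space of $X$; taking expectations, $\mathbb{E}\|\delta\|_\Lambda^2\le\sigma^2(K_n+p)/\beta_n$. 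Combining this with the uniform spectral bounds $\ul\mu_p\le\lambda_{\min}(\Lambda)\le\lambda_{\max}(\Lambda)\le\ol\mu_p$ of~\cite[Lemma~6.2]{zhou_98}, with~\eqref{eqn:beta_n}, and with $\lambda_{\max}(\Theta)\le C_p/K_n$, yields $\mathbb{E}\|\hat f_{(r)}-\bar f_{(r)}\|_2^2\le\lambda_{\max}(\Theta)\,\ul\mu_p^{-1}\,\mathbb{E}\|\delta\|_\Lambda^2\le C\sigma^2n^{-1}K_n$, which is~\eqref{eqn:MSE}.

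For the pointwise bounds, fix an index $k$. For each \emph{fixed} index set $\alpha$ the selection function $\hat b^\alpha(\bar y)=F^T_\alpha(F_\alpha\Lambda F^T_\alpha)^{-1}F_\alpha\bar y$ is affine in the Gaussian vector $\bar y$, and since $\mathrm{Cov}(\bar y)=\tfrac{\sigma^2}{\beta_n}\Lambda$ one gets $\mathrm{Cov}(\hat b^\alpha)=\tfrac{\sigma^2}{\beta_n}F^T_\alpha(F_\alpha\Lambda F^T_\alpha)^{-1}F_\alpha$; its $(k,k)$ entry is nonnegative and bounded by $\|F^T_\alpha(F_\alpha\Lambda F^T_\alpha)^{-1}F_\alpha\|_\infty\le c_{\infty,p}$ by Theorem~\ref{theorem:uniform_Lip}(1), so $\mathrm{Var}(\hat b^\alpha_k)\le c_{\infty,p}\sigma^2/\beta_n\le (c_{\infty,p}/C_{\beta,p})\sigma^2n^{-1}K_n$ uniformly in $\alpha$, $k$ and $K_n$, and the corresponding fourth central moment is at most $3$ times the square of this by Gaussianity. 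Thus if the active set were deterministic the proof would be complete, and this is where the real work lies: the active set $\alpha(\bar y)$ is data dependent, so $\delta_k$ is a continuous piecewise linear (nonlinear) functional of $\bar y$, and the crude Lipschitz bound $|\delta_k|\le c_{\infty,p}\|\bar y-\mathbb{E}\bar y\|_\infty$ would cost a spurious $\log n$ from the maximum of $K_n+p$ Gaussians. This is the step I expect to be the main obstacle.

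To remove the logarithm I would set $\alpha_0:=\alpha(\mathbb{E}\bar y)$, the deterministic active set of the noiseless problem (so $\bar b$ equals the $\alpha_0$-selection function at $\mathbb{E}\bar y$), and decompose $\delta$ into the linear term $F^T_{\alpha_0}(F_{\alpha_0}\Lambda F^T_{\alpha_0})^{-1}F_{\alpha_0}(\bar y-\mathbb{E}\bar y)$, controlled by the fixed-$\alpha$ computation above, plus the remainder $\hat b(\bar y)-F^T_{\alpha_0}(F_{\alpha_0}\Lambda F^T_{\alpha_0})^{-1}F_{\alpha_0}\bar y$, which vanishes while $\bar y$ stays in the polyhedral cell of $\alpha_0$. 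The $k$th coordinate of the remainder is then controlled by the \emph{locality} of the solution map: the diagonally rescaled, $(2p+1)$-banded matrix $\Xi^{1/2}_\alpha F_\alpha\Lambda F^T_\alpha\Xi^{1/2}_\alpha$ has condition number bounded uniformly in $\alpha$ and $K_n$ (Propositions~\ref{prop:F_eign}--\ref{prop:F_eign_bound} together with~\cite[Lemma~6.2]{zhou_98}), hence by the Demko--Moss--Smith estimate~\cite{Demko_siam77} already used in the proof of Proposition~\ref{prop:Linf_norm} the entries of $(F_\alpha\Lambda F^T_\alpha)^{-1}$ decay geometrically off the diagonal, uniformly; carried through the banded blocks of $F_\alpha$ relative to the active-knot partition of Section~\ref{sect:proof_PL_form}, this shows that toggling a constraint at partition distance $d$ from $x_0$ perturbs $\delta_k$ by a factor $O(\rho^d)$ for some $\rho\in(0,1)$ depending on $p$ only. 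Splitting the toggled constraints into those within bounded partition distance of $x_0$ --- which involve only $O(1)$ coordinates of $\bar y-\mathbb{E}\bar y$, whose maximum has second and fourth moments of order $\sigma^2n^{-1}K_n$ and $\sigma^4n^{-2}K_n^2$ \emph{without} a logarithm --- and the rest, whose aggregate contribution is geometrically damped and absorbed, one obtains $\mathbb{E}((\hat b(\bar y)-F^T_{\alpha_0}(F_{\alpha_0}\Lambda F^T_{\alpha_0})^{-1}F_{\alpha_0}\bar y)_k^2)\le C\sigma^2n^{-1}K_n$ and the analogous fourth-moment estimate. Adding the two pieces and summing over the at most $p+1$ indices $k\in I_{x_0}$ gives~\eqref{eqn:2nd_m} and~\eqref{eqn:4th_m}. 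Finally, when $r\in[1,2]$ one has $p=\lceil r-1\rceil=1$ (and $p=1$ for $r=1$ by convention), so every constant above depends on $p=1$ only and hence not on $r$.
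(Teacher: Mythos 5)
Your reduction to coordinate moments of $\delta=\hat b(\bar y)-\bar b$ and your treatment of \eqref{eqn:MSE} are fine; in fact the projection argument ($\hat b$ as the $\Lambda$-metric projection onto $\Omega$, nonexpansiveness, $\mathbb E\|\delta\|^2_\Lambda\le \sigma^2(K_n+p)/\beta_n$, then the Gram matrix $\Theta$ with $\lambda_{\max}(\Theta)\le C_p/K_n$) is a legitimate alternative to the paper's route for the $L_2$ bound. The fixed-$\alpha$ variance computation is also correct, since $G_\alpha\Lambda G_\alpha=G_\alpha$ and the diagonal of $G_\alpha$ is bounded by $\|G_\alpha\|_\infty\le c_{\infty,p}$. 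But the heart of the proposition, \eqref{eqn:2nd_m} and \eqref{eqn:4th_m}, is exactly the step you flag as the obstacle, and your proposed remedy does not close it. The claim that "toggling a constraint at partition distance $d$ from $x_0$ perturbs $\delta_k$ by a factor $O(\rho^d)$" is asserted, not proved: Demko's estimate gives off-diagonal decay of $(F_\alpha\Lambda F^T_\alpha)^{-1}$ for each \emph{fixed} $\alpha$, but your statement is a sensitivity bound comparing the solutions across \emph{different} active sets, i.e.\ a stability result for the polyhedral projection as the active set changes, and it is left unspecified what the $O(\rho^d)$ factor multiplies (the natural candidate is the local size of $\bar y-\mathbb E\bar y$, which would itself need a maximal-inequality argument to be "absorbed"). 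Likewise, the near-field remainder is not a function of $O(1)$ coordinates of $\bar y-\mathbb E\bar y$, because whether a constraint near $x_0$ toggles depends on the whole data vector; so the claimed moment bounds for that piece do not follow as stated. As written, the second- and fourth-moment bounds are therefore not established.

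For comparison, the paper avoids localization entirely. It uses the conic subdivision of the piecewise linear map $\hat b\circ X^T$ (Proposition~\ref{lem:PL_formulation} plus polyhedral theory) to write the increment along the segment from the noiseless data $z$ to $z+\sigma\epsilon$ as a convex combination of the linear selections applied to the \emph{same} increment, $\hat b(X^T(z+\sigma\epsilon))-\hat b(X^Tz)=\big(\sum_i\mu_i G_{\alpha_i}\big)X^T\sigma\epsilon$, with random weights $(\mu_i,G_{\alpha_i})$. Conditioning on this tuple and using the uniform bound $\max_\alpha\|G_\alpha N(x_0)\|^2_2\le p\,c^2_{\infty,p}$ (a consequence of Theorem~\ref{theorem:uniform_Lip} and the fact that $N(x_0)$ has few nonnegative entries summing to one) gives $\mathbb E|\hat f_{(r)}(x_0)-\bar f_{(r)}(x_0)|^2\le \beta_n^{-1}\lambda_{\max}(\Lambda)\,p\,c^2_{\infty,p}\sigma^2$, and the analogous computation with Gaussian fourth moments gives \eqref{eqn:4th_m}; no $\log n$ ever appears because the data-dependence of the active set is absorbed into the bounded random matrices $G_{\alpha_i}$, not into a maximum over coordinates. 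If you want to complete your proof, the path/mean-value representation is the missing ingredient; the exponential-decay localization you sketch would be a much harder (and here unnecessary) result to prove.
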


\begin{proof}
 Recall that $p=\lceil r-1 \rceil$, $ N(x)=[B_1(x), \ldots, B_{K_n+p}(x)]^T \in \mathbb R^{K_n+p}$, and $X =[ N(x_1), \ldots, N(x_n)]^T \in \mathbb R^{ n\times (K_n+p)}.$ Fix $x_0\in [0, 1]$. Let $h := N(x_0)\in \mathbb R^{K_n+p}$.
Note that $h$ has
  at most $p$ nonzero elements and each of these nonzero elements is positive whose sum is $1$.
  Let $G_\alpha$ be the coefficient matrix of a linear selection function corresponding to an index set $\alpha$, i.e.,
$G_\alpha = F^T_\alpha (F_\alpha \Lambda F^T_\alpha)^{-1} F_\alpha$.
  Hence,
  \begin{eqnarray*}
     \| G_\alpha h \|^2_2 & = & \sum_i \Big(\sum_{h_j>0} (G_\alpha)_{ij} h_j \Big)^2 \, \le \,
      p \cdot \sum_i \sum_{h_j>0} (G_\alpha)^2_{ij} h^2_j \\
      & \le &  p \cdot \sum_i \sum_{h_j>0} (G_\alpha)^2_{ij} h_j  \, \le \, p \cdot \sum_{h_j>0} h_j  \sum_i (G_\alpha)^2_{ij} \\
      & \le &  p \cdot  \sum_{h_j>0} h_j \|G_\alpha\|^2_\infty \le p \cdot \|G_\alpha\|^2_\infty =p \cdot c^2_{\infty, p},
  \end{eqnarray*}
  where the first inequality in the third line is due to  the symmetry of $G_\alpha$ and the following implication
  \[
     \sum_i \big| (G_\alpha)_{ij} \big| \le \|G_\alpha\|_\infty \ \Longrightarrow \   \sum_i \big| (G_\alpha)_{ij} \big|^2 \le  \big( \sum_i \big| (G_\alpha)_{ij} \big| \big)^2 \le \| G_\alpha\|^2_\infty.
  \]
 As a result, in light of Theorem~\ref{theorem:uniform_Lip}, we have
 \begin{equation} \label{eqn:sup_bd_Ah}
  \max_\alpha \| G_\alpha h\|^2_2 \le p \cdot c^2_{\infty, p}.
 \end{equation}

Let $\epsilon =(\epsilon_1, \ldots, \epsilon_{n} )^T $ be iid random variables with mean zero and variance one, and let 
$z: =( f(x_1), \ldots, f(x_n))^T$. Thus $\bar y = X^T (z + \sigma \epsilon)/\beta_n$.  Hence
  \begin{eqnarray*}
    h^T \hat b( \bar y) =  h^T \hat b(X^T(z+ \sigma\epsilon)/\beta_n) = \frac{1}{\beta_n} h^T G_{\alpha(\epsilon)} X^T (z+ \sigma\epsilon).
%
  \end{eqnarray*}
 Furthermore, since $\hat b(\cdot)$ is a continuous piecewise linear function on $\mathbb R^{K_n+p}$, so is $\hat b\circ X^T$ on $\mathbb R^n$.  It follows from the polyhedral theory that $\hat b\circ X^T$ admits a conic subdivision of $\mathbb
R^n$ \cite{FPang03_book, Scholtes94_thesis}, i.e., there exist a finite collection of polyhedral cones  $\{ \mathcal C_j \}^q_{j=1}$ and linear functions $\{ g^j \}^q_{j=1}$ such that (i) $\bigcup_j \mathcal C_j = \mathbb R^n$; (ii) each cone $\mathcal C_j$ has nonempty interior; (iii) the intersection of any two cones is a common proper face of both cones; and (iv) $\hat b\circ X^T$ coincides with $ g^j$ on each $\mathcal C_j$. For any given $z' \in \mathbb R^n$, let $[z, z']$ be a line segment joining $z$ and $z'$. Starting from $z$, we assume that the line segment $[z, z']$ intersects some cones in the conic subdivision at $z_1, z_2, \ldots, z_{\ell-1} \in \mathbb R^n$, and ends at $z'$. Further, each subsegment of any two consecutive points, such as $[z, z_1], [z_1, z_2], \ldots, [z_{\ell-1}, z']$, belongs to a single cone. Hence there exist
$\mu_i \in [0, 1], i=1,\ldots, \ell$ with $\ell \le q$, $\sum^{\ell}_{i=1} \mu_i=1$ and $G_{\alpha_i}$ such that
 \begin{eqnarray*}
   \lefteqn{ \hat b(X^T z') - \hat b(X^Tz) }\\
    & =& G_{\alpha_1} X^T (z_1 -z) + G_{\alpha_2} X^T(z_2-z_1)+ \cdots +  G_{\alpha_{\ell}} X^T (z'-z_{\ell-1})\\
   & =&  \Big( \sum^\ell_{i=1} \mu_i G_{\alpha_i} \Big) X^T(z'-z),
  \end{eqnarray*}
where $\mu_i$ and $G_{\alpha_i}$ depend on $z'$ for the fixed $z$. Since there are $q$ cones in the conic subdivision, we may use the extended tuple $(\mu_i, G_{\alpha_i})^q_{i=1}$ (corresponding to $z'$) to characterize $\hat b(X^T z') -\hat b(X^T z) $, by setting some $\mu_i=0$, without loss of generality. Note that if $z'$ is a random variable, so is $(\mu_i, G_{\alpha_i})^q_{i=1}$.

Using $\bar y = X^T (z+\sigma \epsilon)/\beta_n$, we have, for the given vector $h$,
 \begin{eqnarray*}
   \lefteqn{ \mathbb E \big(|h^T \hat b (\bar y)  -h^T \hat b(\mathbb E(\bar y) ) |^2 \big) }   \\
    & =  & \frac{1}{\beta^2_n} \mathbb E_{ (\mu_i, G_{\alpha_i})^q_{i=1} }  \Big( \mathbb E \big (  | h^T \hat b (X^T(z+\sigma\epsilon)) -h^T \hat b(X^T z) |^2  \, \big | \,  (\mu_i, G_{\alpha_i})^q_{i=1}   \big )  \Big).
 \end{eqnarray*}
 Moreover,  for a fixed tuple $(\mu_i, G_{\alpha_i})^q_{i=1} $,
  \begin{eqnarray*}
   \lefteqn{  \mathbb E  \Big(  | h^T \hat b (X^T(z+\sigma\epsilon)) -h^T \hat b(X^T z) |^2  \, \big | \,  (\mu_i, G_{\alpha_i})^q_{i=1}   \Big) }  \\
   & = &  \sigma^2\mathbb E\Big( |\sum^q_{i=1} (\mu_i h^T G_{\alpha_i} )X^T \epsilon \cdot \sum^q_{i=1} ( \mu_i h^T G_{\alpha_i} )X^T \epsilon  |  \, \big | \,  (\mu_i, G_{\alpha_i})^q_{i=1}  \Big) \\
   &= & \sigma^2\mathbb E \Big(  |\sum^q_{i=1}  (\mu_i  h^T G_{\alpha_i}  X^T) \epsilon \cdot \epsilon^T (\sum^q_{j=1} \mu_j X G_{\alpha_j} h) |  \, \big | \,  (\mu_i, G_{\alpha_i})^q_{i=1}  \Big) \\
    & =  & \sigma^2 \Big | (\sum^q_{i=1}  \mu_i  h^T G_{\alpha_i} X^T  ) (\sum^q_{j=1} \mu_j X G_{\alpha_j} h)  \Big |  \\
    & \le  & \sigma^2 \Big(  \sum^q_{i=1} \mu_i \| X G_{\alpha_i} h\|_2 \Big) \cdot  \Big( \sum^q_{j=1 } \mu_j \| X G_{\alpha_j} h\|_2 \Big)  \\
   & \le & \sigma^2 \big( \max_i  \| X G_{\alpha_i} h\|_2\big)^2
       \, \le \, \sigma^2\|X\|^2_2 \cdot \big (\max_i \| G_{\alpha_i} h\|_2 \big)^2  \\
   & \le & \sigma^2\beta_n \lambda_{\max}(\Lambda) \cdot p \cdot c^2_{\infty, p}
 \end{eqnarray*}
 where the last inequality is due to $\| X\|^2_2 \le \beta_n \lambda_{\max}(\Lambda)$ and (\ref{eqn:sup_bd_Ah}). Therefore,
 \[
  \mathbb E \big (|h^T \hat b (\bar y) -h^T \hat b(\mathbb E(\bar y) |^2 \big)  \le \frac{1}{\beta_n} \cdot \lambda_{\max}(\Lambda) \cdot p \cdot c^2_{\infty, p} \cdot \sigma^2.
 \]
Observing that $\lambda_{\max}(\Lambda)$ is uniformly bounded \cite{zhou_98} and the uniform bound of $\beta_n$ in (\ref{eqn:beta_n}), we obtain (\ref{eqn:2nd_m}) for $p=\lceil r-1 \rceil$.
 %
%
%

The above argument can be extended to prove (\ref{eqn:MSE}). Indeed,
let $h(x):= N(x)$. Thus $\hat f_{(r)}(x) - \bar f_{(r)}(x) = h^T(x) [\hat b(\bar y) - \hat b(\mathbb E(\bar y))]$ such that
\begin{eqnarray*}
   \lefteqn{ \mathbb E \big( \| \hat f_{(r)} - \bar f_{(r)}  \|^2_2 \big) }   \\
    & =  & \frac{1}{\beta^2_n} \mathbb E_{ (\mu_i, G_{\alpha_i})^q_{i=1} }  \Big( \mathbb E \big ( \int^1_0  \big | h^T(x) [ \hat b (X^T(z+\sigma\epsilon)) - \hat b(X^T z)]  \big |^2  dx \, \big | \,  (\mu_i, G_{\alpha_i})^q_{i=1}   \big )  \Big),
 \end{eqnarray*}
where, for a given tuple $(\mu_i, G_{\alpha_i})^q_{i=1} $,
 \begin{eqnarray*}
 \lefteqn{  \mathbb E  \Big(  \int^1_0 \big | h^T(x)  [\hat b (X^T(z+\sigma\epsilon)) - \hat b(X^T z)] \big |^2  dx  \, \big | \,  (\mu_i, G_{\alpha_i})^q_{i=1}   \Big) }  \\
   & \le & \sigma^2 \| X\|^2_2  \int^1_0 \big(  \max_i  \| G_{\alpha_i} h(x)\|_2 \big)^2 dx  \, \le \, \sigma^2 \| X\|^2_2  \cdot (p \cdot c^2_{\infty, p} ),
\end{eqnarray*}
which yields (\ref{eqn:MSE}).

To show (\ref{eqn:4th_m}), we consider 
 \begin{eqnarray*}
   \lefteqn{ \mathbb E(| h^T \hat b (\bar y) -h^T \hat b(\mathbb E(\bar y) |^4) }   \\
    & =  & \frac{1}{\beta^4_n} \mathbb E_{ (\mu_i, G_{\alpha_i})^q_{i=1} }  \Big( \mathbb E \big (  | h^T \hat b (X^T(z+\sigma\epsilon)) -h^T \hat b(X^T z) |^4 \, \big | \,  (\mu_i, G_{\alpha_i})^q_{i=1}   \big )  \Big).
 \end{eqnarray*}
 For a fixed tuple $(\mu_i, G_{\alpha_i})^q_{i=1} $, let $v:=\sum^q_{i=1} \mu_i X G_{\alpha_i} h\in \mathbb R^n$ and $\mathbb E(\epsilon^4_i)=3, i=1, \ldots, n$.
  We thus have
  \begin{eqnarray*}
   \lefteqn{  \mathbb E  \Big(  | h^T \hat b (X^T(z+\sigma\epsilon)) -h^T \hat b(X^T z) |^4  \, \big | \,  (\mu_i, G_{\alpha_i})^q_{i=1}   \Big) }  \\
   & =  & \mathbb E\Big( | (\sum^q_{i=1} \mu_i h^T G_{\alpha_i} X^T) \sigma\epsilon \cdot ( \sum^q_{i=1}  \mu_i h^T G_{\alpha_i} X^T) \sigma\epsilon  |^2  \, \big | \,  (\mu_i, G_{\alpha_i})^q_{i=1}
   \Big)\\
   & =  &\mathbb E \Big(  | v^T \sigma\epsilon \cdot \sigma\epsilon^T v |^2  \, \big | \,  (\mu_i, G_{\alpha_i})^q_{i=1}  \Big)  \\
   & = & \sigma^4\sum^n_{i=1} v^4_i \cdot \mathbb E(\epsilon^4_i) +\sigma^4 \sum^n_{i, j=1, i\ne j} (v_i v_j)^2 \cdot  \mathbb E(\epsilon^2_i \cdot \epsilon^2_j)  \\
   &= & 2\sigma^4  \sum^n_{i=1} v^4_i + \sigma^4 \sum^n_{i=1} \sum^n_{j=1} v^2_i v^2_j   \\
  & \le  & 2\sigma^4  \|v\|^4_2 + \sigma^4 (\sum^n_{i=1} v^2_i) \cdot (\sum^n_{j=1} v^2_j)  \le 3 \sigma^4 \cdot \|v\|^4_2 \\
   & \le & 3\sigma^4 \cdot (\beta_n \lambda_{\max}(\Lambda) \cdot p \cdot c^2_{\infty, p})^2,
 \end{eqnarray*}
where the last inequality is due to $\| v\|^2_2 \le \beta_n \lambda_{\max}(\Lambda) p c^2_{\infty, p}$.  This shows that
\[
   \mathbb E \big( |h^T \hat b (\bar y) -h^T \hat b(\mathbb E(\bar y) |^4 \big) \le \frac{2}{\beta^2_n} \cdot   \sigma^4 \cdot ( \lambda_{\max}(\Lambda) \cdot p \cdot c^2_{\infty, p})^2.
\]
Using the uniform bounds on $\lambda_{\max}(\Lambda)$ and $\beta_n$ again, we obtain (\ref{eqn:4th_m}).
%
%
\end{proof}

 Propositions \ref{lem:bias} and \ref{lem:pointwise} imply that, for any $x_0\in [0,1]$,
\begin{equation}  \label{eqn:statement_2}
\sup_{f\in {\cal C}_H(r, L)} \mathbb E |\hat f_{(r)}(x_0) - f(x_0)|^2 \, \le \, C_{1r}^2 L^2 K_n^{-2r} + C_{3r}\sigma^2 K_nn^{-1} .
\end{equation}
This shows Statement (2) of Theorem \ref{thm:convergence} by using the optimal choice of $K_n$.

%
\section{Proof of Theorem \ref{thm:sup-norm}} \label{sect:proof_thm:sup-norm}

Throughout this section,
 we shall use $C_k$ or $c_k$ with $k\in \mathbb N$ to denote positive constants that depend only on $L$ (and $\sigma$). 
 We introduce some lemmas.
 The first lemma, as a complement to (\ref{equ:random2}),  provides a bound for the stochastic term of estimation error in the sup-norm.
%
%

\begin{lemma} \label{lem:random-2}
There exists a constant $C_2>0$ such that
\begin{align}
& \mathbb E\Big(\|\hat f_{(r)} - \bar f_{(r)}\|_\infty I\big\{\|\hat f_{(r)} - \bar f_{(r)}\|_\infty \ge u\big\} \Big) \label{equ:random3} \\
\le & \sqrt{\pi\over 2}C_2 \sigma {\sqrt{n^{-1}K_n}(K_n+1)} \exp\Big\{-{nu^2\over 2K_nC_2^2\sigma^2}   \Big\}. \nonumber
\end{align}
\end{lemma}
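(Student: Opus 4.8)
The plan is to reduce the claim to a truncated first-moment estimate for the maximum of the $K_n+1$ (nearly) standard normal variables that already appear in the proof of Proposition~\ref{lem:random}. Since $r\in[1,2]$ forces $p=1$, recall from that proof that, with $C_2=c_{\infty,1}/\sqrt{C_{\beta,1}}$,
\[
  W:=\|\hat f_{(r)}-\bar f_{(r)}\|_\infty\ \le\ \sigma C_2\sqrt{K_n/n}\,\tilde\Gamma_r,\qquad \tilde\Gamma_r:=\max_{1\le k\le K_n+1}|\xi_k|,
\]
where $\xi_k=\beta_n^{-1/2}\sum_{i=1}^n B_k^{[1]}(x_i)\epsilon_i$ is a centered normal variable with variance $\sum_{i=1}^n (B_k^{[1]}(x_i))^2/\beta_n\le 1$ (equal to $1$ at the interior indices and strictly smaller at the two boundary indices $k=1$ and $k=K_n+1$). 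First I would set $u':=(u/(\sigma C_2))\sqrt{n/K_n}$ (we may assume $u\ge0$, so that $u'\ge0$), so that $\{W\ge u\}\subseteq\{\tilde\Gamma_r\ge u'\}$ and $W\le\sigma C_2\sqrt{K_n/n}\,\tilde\Gamma_r$ on that event; this gives
\[
  \mathbb E\big(W\,I\{W\ge u\}\big)\ \le\ \sigma C_2\sqrt{K_n/n}\;\mathbb E\big(\tilde\Gamma_r\,I\{\tilde\Gamma_r\ge u'\}\big).
\]

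The next step is the elementary union-type bound $\mathbb E\big(\tilde\Gamma_r\,I\{\tilde\Gamma_r\ge u'\}\big)\le\sum_{k=1}^{K_n+1}\mathbb E\big(|\xi_k|\,I\{|\xi_k|\ge u'\}\big)$: on the event $\{\tilde\Gamma_r\ge u'\}$, if $k^\star$ realizes the maximum then $\tilde\Gamma_r=|\xi_{k^\star}|=|\xi_{k^\star}|\,I\{|\xi_{k^\star}|\ge u'\}\le\sum_k|\xi_k|\,I\{|\xi_k|\ge u'\}$, while on the complementary event both sides vanish. For a single $\xi_k\sim N(0,v_k^2)$ with $0<v_k\le1$, the standard identity $\mathbb E\big(|\xi_k|\,I\{|\xi_k|\ge u'\}\big)=v_k\sqrt{2/\pi}\,e^{-(u')^2/(2v_k^2)}$, combined with $v_k\le1$ (used both in the prefactor and in the exponent), gives $\mathbb E\big(|\xi_k|\,I\{|\xi_k|\ge u'\}\big)\le\sqrt{2/\pi}\,e^{-(u')^2/2}$. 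Summing over the $K_n+1$ indices, substituting $(u')^2/2=nu^2/(2K_nC_2^2\sigma^2)$, and using $\sqrt{2/\pi}\le\sqrt{\pi/2}$ then yields precisely the asserted inequality.

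The main obstacle is to resist the tempting but lossy route of integrating the tail bound of Proposition~\ref{lem:random} directly: the identity $\mathbb E\big(W\,I\{W\ge u\}\big)=u\,P(W\ge u)+\int_u^\infty P(W\ge t)\,dt$, fed with that tail bound, produces after an elementary Gaussian-integral estimate a term of order $(K_n+1)e^{-nu^2/(2K_nC_2^2\sigma^2)}/u$, which is unbounded as $u \downarrow 0$ and therefore does not match the prefactor (independent of $u$) claimed here; one genuinely has to exploit the Gaussian structure of the $\xi_k$ together with the exact truncated-moment identity above. A secondary point needing care is the verification that $\mathrm{Var}(\xi_k)\le1$ at the two boundary indices (it is an equality at the interior indices by the very definition of $\beta_n$); this holds because, for equally spaced knots, the boundary B-splines take strictly smaller values than the interior ones at the design points, and in any case a uniform constant bound on these variances would be enough and could be absorbed into $C_2$.
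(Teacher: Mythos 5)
Your proof is correct, but it takes a genuinely different route from the paper's. The paper's own argument is precisely the "integration route": it writes $\mathbb E\big(W\,I\{W\ge u\}\big)$ with $W=\|\hat f_{(r)}-\bar f_{(r)}\|_\infty$ as $\int_u^\infty P(W\ge t)\,dt$, plugs in the tail bound (\ref{equ:random2}), and estimates the Gaussian integral; since $t^2\ge u^2+(t-u)^2$ for $t\ge u\ge 0$, one gets $\int_u^\infty e^{-nt^2/(2K_nC_2^2\sigma^2)}dt\le \sqrt{\pi/2}\,C_2\sigma\sqrt{K_n/n}\;e^{-nu^2/(2K_nC_2^2\sigma^2)}$, i.e. exactly the claimed $u$-independent prefactor — the $1/u$ blow-up you warn against only arises if one uses a Mills-ratio-type bound, so your stated "main obstacle" mischaracterizes that route. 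The genuine delicacy in the layer-cake approach lies elsewhere: the exact identity is $\mathbb E\big(W I\{W\ge u\}\big)=uP(W\ge u)+\int_u^\infty P(W\ge t)\,dt$, and the paper's displayed equality silently drops the boundary term $uP(W\ge u)$, which a fully rigorous version must absorb (at the cost of a constant or a slightly weakened exponent). Your argument sidesteps this entirely by going back to the Gaussian structure: the reduction $W\le \sigma C_2\sqrt{K_n/n}\,\tilde\Gamma_r$ is exactly what the proof of Proposition~\ref{lem:random} provides, the union bound for the truncated maximum is sound, the truncated first-moment identity $\mathbb E\big(|\xi|I\{|\xi|\ge a\}\big)=v\sqrt{2/\pi}\,e^{-a^2/(2v^2)}$ for $\xi\sim N(0,v^2)$ is correct, and the variance check $v_k\le 1$ (equality at interior indices by the definition of $\beta_n$, strict inequality at the two boundary half-splines when $p=1$) is easily verified and, as you note, could in any case be absorbed into the constant. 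The payoff of your route is that it proves the stated bound exactly as written (indeed with the sharper constant $\sqrt{2/\pi}$ in place of $\sqrt{\pi/2}$) and is immune to the boundary-term issue; the payoff of the paper's route is brevity, reusing (\ref{equ:random2}) verbatim without reopening the structure of the noise variables $\xi_k$.
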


\begin{proof}
Direct calculation yields that
\begin{align*}
\lefteqn{ \mathbb E\Big(\|\hat f_{(r)} - \bar f_{(r)}\|_\infty I\big\{\|\hat f_{(r)} - \bar f_{(r)}\|_\infty \ge  u\big\} \Big) } \\
& =\int_u^\infty P\Big(\|\hat f_{(r)} - \bar f_{(r)}\|_\infty \ge t\Big) dt
 \le  \int_u^\infty (K_n+1)\exp\Big\{ -{n\over 2 K_n C_{2}^2\sigma^2}t^2 \Big\}dt\\
& \le \sqrt{\pi\over 2}C_2 \sigma {\sqrt{n^{-1}K_n}(K_n+1)} \exp\Big\{-{nu^2\over 2K_nC_2^2\sigma^2}   \Big\}.
\end{align*}
This completes the proof.
\end{proof}

It is shown next that it is highly improbable that the estimated $\hat r$ is strictly smaller than the true $r$. We say a few words about notation.
Recall  that $\tau_n = \lceil (\log n)^{1 \over 2} \rceil$ and define the set $\mathcal R:=\{ r_j \, | \, r_j =1 + j/\tau_n, \, j=0, 1,\ldots, \tau_n \}$.

\begin{lemma}\label{lem:est-r} Let $r, d \in [1, 2]$ with $d<r$. There exists $C_3>0$ such that
$$\sup_{f\in {\cal C}_H(r,L)} P\big(\hat r = d\big) \le C_3 \tau_n n^{-{1\over 2d+1}}.$$
\end{lemma}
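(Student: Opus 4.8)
The plan is to show that the event $\{\hat r = d\}$ with $d < r$ forces the selection criterion to fail at some index $r_j < d$ (or equivalently, that the criterion held up to $d$ but the true smoothness is higher), and that such a failure is controlled by the exponential tail bound of Lemma~\ref{lem:random-2}. Concretely, write $d = r_m$ and $r = r_{m'}$ with $m < m'$. By the definition of $\hat k$ as a supremum, the event $\{\hat r = r_m\}$ implies that the criterion is violated at the next index, i.e. there exists $j \le m+1$ with $\|\hat f_{(r_{m+1})} - \hat f_{(r_j)}\|_\infty > \tfrac{1+\sqrt 2}{2}\,\psi_{(r_j)}$. Since $r_j \le r_m = d < r$, both $\hat f_{(r_j)}$ and $\hat f_{(r_{m+1})}$ are computed with spline degrees not exceeding $\lceil r - 1\rceil$, so the bias bound of Proposition~\ref{lem:bias} applies to each: $\|\bar f_{(r_j)} - f\|_\infty \le C_{1r_j} L K_n^{-r}$ and similarly for $r_{m+1}$ (note the exponent is the \emph{true} $r$, since $f \in \mathcal C_H(r,L)$ and $r_j \le d < r$).

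The key step is a triangle-inequality decomposition: on the event above,
\begin{align*}
\tfrac{1+\sqrt2}{2}\psi_{(r_j)} &< \|\hat f_{(r_{m+1})} - \hat f_{(r_j)}\|_\infty \\
&\le \|\hat f_{(r_{m+1})} - \bar f_{(r_{m+1})}\|_\infty + \|\bar f_{(r_{m+1})} - f\|_\infty + \|f - \bar f_{(r_j)}\|_\infty + \|\bar f_{(r_j)} - \hat f_{(r_j)}\|_\infty .
\end{align*}
The two bias terms are $O(L K_n^{-r})$, which for the relevant choice of $K_n$ (of order $(n/\log n)^{1/(2r+1)}$, the optimal choice for smoothness $r$) is of strictly smaller order than $\psi_{(r_j)}$ — indeed $\psi_{(r_j)}$ has the slower rate $(\log n / n)^{r_j/(2r_j+1)}$ with $r_j < r$. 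Hence for $n$ large the bias terms can be absorbed, and the event forces one of the two stochastic terms, say $\|\hat f_{(r_i)} - \bar f_{(r_i)}\|_\infty$ for $i \in \{j, m+1\}$, to exceed a constant multiple of $\psi_{(r_j)}$, and in particular to exceed $c\,\psi_{(r_i)}$ for an appropriate constant $c > \tfrac{1}{2}$ (using monotonicity-type comparisons among the $\psi_{(r_i)}$, or simply bounding $\psi_{(r_j)}$ below by a constant times $\psi_{(r_{m+1})}$ up to the bias terms). Then the union bound over $j \le m+1 \le \tau_n$ and over $i$, combined with Lemma~\ref{lem:random-2} (or directly \eqref{equ:random2}) applied at level $u$ a constant multiple of $\psi_{(r_i)} = \Theta\big(\sigma\sqrt{K_{(r_i)}\log n / n}\big)$, yields a bound of the form
\[
P(\hat r = d) \le C \tau_n (K_n+1)\exp\Big\{ -\tfrac{n}{2K_n C_2^2 \sigma^2}\, u^2\Big\} \le C \tau_n (K_n+1)\, n^{-c'}
\]
for a constant $c'$ that, by the explicit form of $K_{(r_i)}$ in \eqref{eqn:K_r} and $\psi_{(r_i)}$ in \eqref{eqn:psi}, works out to give exactly the exponent $n^{-1/(2d+1)}$ after the polynomial factor $K_n+1$ is absorbed; the constant $\tfrac{1+\sqrt2}{2}$ in the threshold is chosen precisely so that this bookkeeping closes.

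The main obstacle is the constant bookkeeping in the last step: one must verify that the threshold $\tfrac{1+\sqrt2}{2}\psi_{(r_j)}$, after subtracting the bias contributions and after comparing $\psi_{(r_j)}$ with $\psi_{(r_i)}$, still leaves a multiple of $\psi_{(r_i)}$ strictly larger than the variance proxy $\sigma\sqrt{2K_{(r_i)}\log n / n}$ by a factor whose square exceeds $1$, so that the exponent in the Gaussian tail beats the polynomial factors $\tau_n (K_n+1)$ and delivers the stated rate $n^{-1/(2d+1)}$ rather than something weaker. This is where the specific numerical value $\tfrac{1+\sqrt2}{2}$ and the $\sqrt{2/(2r+1)}$ factor in $\psi_{(r)}$ are used; everything else is a routine application of Proposition~\ref{lem:bias}, Proposition~\ref{lem:random}/Lemma~\ref{lem:random-2}, and a union bound over the $O(\tau_n)$ candidate indices.
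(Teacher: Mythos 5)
Your outline is the paper's argument: on $\{\hat r = d\}$ the Lepski comparison must fail at the next grid point $r''$ above $d$, a union bound over the at most $\tau_n$ indices $j$ produces the factor $\tau_n$, the triangle inequality splits $\|\hat f_{(r'')}-\hat f_{(r_j)}\|_\infty$ into two bias terms controlled by Proposition~\ref{lem:bias} and two stochastic terms controlled by the tail bound (\ref{equ:random2}), and the exponent $n^{-1/(2d+1)}$ comes out of the Gaussian tail after absorbing the factor $K_n+1$. So the approach is the same; the soft spot is precisely the step you defer, and the numerical guidance you give for it is not right. If the $\|\hat f_{(r_j)}-\bar f_{(r_j)}\|_\infty$ term is only required to exceed $c\,\psi_{(r_j)}$ with a constant $c$ merely larger than $1/2$, then (\ref{equ:random2}) with $u=c\,\psi_{(r_j)}$ and $K_n=K_{(r_j)}$ gives, using $\psi_{(r_j)}=2\sqrt{2/(2r_j+1)}\,C_2\sigma\sqrt{K_{(r_j)}\log n/n}$, a bound of order $(K_{(r_j)}+1)\,n^{-4c^2/(2r_j+1)}\asymp n^{-(4c^2-1)/(2r_j+1)}$ up to logarithms; in the worst case $r_j=d$ this is $n^{-(4c^2-1)/(2d+1)}$, which is weaker than the stated $n^{-1/(2d+1)}$ unless $4c^2-1\ge 1$, i.e.\ $c\ge 1/\sqrt2$. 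In particular an even split of the leftover budget between the two stochastic terms (about $0.6\,\psi_{(r_j)}$ each) does not close the bookkeeping; you must give essentially the whole budget to the $r_j$ term and only an amount that is $o(\psi_{(r_j)})$ — but still a large multiple of $\psi_{(r'')}$ — to the $r''$ term. Conversely, your stated sufficient criterion (exceeding $\sigma\sqrt{2K\log n/n}$ by a factor whose square exceeds one) is stronger than what is needed: the true requirement is only that the squared factor exceed $\frac{1}{2r_j+1}+\frac{1}{2d+1}\le \frac23$, and the paper's own threshold does not satisfy your criterion.

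This is exactly how the paper executes the step: it bounds each bias by $C_1LK_{(r')}^{-r}\le C_1LK_{(r')}^{-r'}=\psi_{(r')}/2$ (up to the uniformly vanishing correction $\omega_{r',r''}$), so the bias consumes exactly half of the threshold $\frac{1+\sqrt2}{2}\psi_{(r')}$, and then allocates asymmetrically: $\frac{\sqrt2}{2}\psi_{(r')}(1-o(1))$ to the $r'$ stochastic term and $\frac{2r''+1}{2}\psi_{(r'')}$ to the $r''$ term, each tail then being $O\big(n^{-1/(2d+1)}(\log n)^{-1/5}\big)$. Your sharper bias bound $O(LK_{(r_j)}^{-r})=o(\psi_{(r_j)})$ (valid since $r_j\le d<r$, and for large $n$ also $r''\le r$ in the fixed-$(d,r)$ setting of the lemma) actually leaves even more room, roughly $\frac{1+\sqrt2}{2}\psi_{(r_j)}$, for the $r_j$ term, so the argument is repairable along exactly the paper's lines — but as written, with ``$c>\frac12$'' and the even-split reading, the final bound would be $n^{-\theta/(2d+1)}$ with $\theta<1$, not the claimed rate.
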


\begin{proof}
By the definition of $\hat r$ given before Theorem \ref{thm:sup-norm},
$$\sup_{f\in {\cal C}_H(r,L)}P\big( \hat r  =  d\big) \le \sum_{d \, \ge \, r'\in \mathcal R} \sup_{f\in {\cal C}_H(r,L)} p_\infty(r', d) \le \tau_n~ \max_{d\, \ge \, r'\in \mathcal R} \sup_{f\in {\cal C}_H(r,L)} p_\infty(r', d),$$
where
$$p_\infty(r', d) := P\Big( \|\hat f_{(r'')} -\hat f_{(r')}\|_\infty > {1+\sqrt{2} \over 2}~\psi_{(r')} \Big).$$
Here $\psi_{(r')}$ is defined in (\ref{eqn:psi}) and
  $r'' := \min\{r\in \mathcal R \, | \, r>d\}$, i.e., $r''\in \mathcal R$ is closest to $d$ from above. Hence, $r''>d\ge r'$. In view of (\ref{equ:bias2}) and (\ref{equ:random2}),
\begin{align*}
\|\hat f_{(r'')} -\hat f_{(r')}\|_\infty & \le \|\hat f_{(r'')} - \bar f_{(r'')}\|_\infty + \|\hat f_{(r')} - \bar f_{(r')}\|_\infty + \|\bar f_{(r'')} - f\|_\infty + \|\bar f_{(r')} - f\|_\infty\\
&\le  \|\hat f_{(r')} - \bar f_{(r')}\|_\infty + \|\hat f_{(r'')} - \bar f_{(r'')}\|_\infty +C_{1} L K_{(r')}^{-r'} + C_{1} L K_{(r'')}^{-r''}\\
& =  C_{1} L  K_{(r')}^{-r'} \big(1 +\omega_{r',r''}\big)+  \|\hat f_{(r')} - \bar f_{(r')}\|_\infty + \|\hat f_{(r'')} - \bar f_{(r'')}\|_\infty,
\end{align*}
where
\begin{align*}
|\omega_{r',r''}| \, = \, {K^{r'}_{(r')}\over K^{r''}_{(r'')}} \le c_1 \Big({\log n\over n}\Big)^{(r''-r') \over (2r''+1)(2r'+1) }   \le c_1 \Big({\log n\over n}\Big)^{1\over 25\tau_n },
\end{align*}
for a positive constant $c_1$ which is bounded away from zero and above.  Since $(n^{-1} \log n)^{\sqrt{\log n}} \rightarrow 0$ as $n \rightarrow \infty$,  $\omega_{r',r''}$ converges to zero uniformly for all $r',r'' \in \mathcal R$ with the given $\tau_n$.
Let $p_\infty(r', d) \le p_{1,\infty} + p_{2,\infty},$
where
\begin{align*}
p_{1,\infty} &:= P\Big\{  \|\hat f_{(r')} - \bar f_{(r')}\|_\infty  \ge  {\sqrt{2}\over 2}~\psi_{(r')}( 1- o(1))   \Big\}, \\
p_{2,\infty} &:= P\Big\{ \|\hat f_{(r'')} - \bar f_{(r'')}\|_\infty  \ge {(2r''+1) \over 2} ~\psi_{(r'')} \Big\}.
\end{align*}
By using (\ref{equ:random2}), (\ref{eqn:psi}) and the orders of $K_{(r')}, K_{(r'')}$, we obtain two positive constants $c_2$ and $c_3$ such that
\begin{align*}
p_{1,\infty} & \le \big ( K_{(r')} +1 \big) \cdot n^{- {2 \over 2 r'+1}} \le c_2 n^{-{1\over (2r'+1)}} (\log n)^{-{1\over 2r'+1}} \le c_2 n^{-{1\over 2d+1}} (\log n)^{-1/5}, \\
%
%
 p_{2,\infty} &\le \big ( K_{(r'')} +1 \big) \cdot n^{- (2 r''+1)} \le c_3 n^{-{1\over (2r''+1)}} (\log n)^{-{1\over 2r''+1}} \le c_3 n^{-{1\over 2d+1}} (\log n)^{-1/5}.
 %
\end{align*}
%
%
Combining the above results, we see that the lemma holds.
\end{proof}

The following lemma develops a uniform bound on the sup-norm risk of $\hat f_{(r)}$ for $r\in [1,2]$.

\begin{lemma} \label{lem:error-bd}
There exist positive constants $C_4$ and $C_{5}$ such that
\begin{eqnarray*}
\sup_{r\in[1,2]} \, \sup_{f\in {\cal C}_H(r, L)}P_f\Big\{  \psi_{(r)}^{-1}\|\hat f_{(r)} - f\|_\infty \ge  1+\sqrt{2}~  \Big\} & \le &C_4 n^{-1},\\
\sup_{r\in [1,2]} \, \sup_{f\in {\cal C}_H(r, L)} \mathbb E\Big( \psi_{(r)}^{-2}\|\hat f_{(r)} - f\|_\infty^2 \Big) &\le & C_{5}. 
\end{eqnarray*}
\end{lemma}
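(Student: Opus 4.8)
The plan is to split the error by the triangle inequality, $\|\hat f_{(r)}-f\|_\infty \le \|\bar f_{(r)}-f\|_\infty + \|\hat f_{(r)}-\bar f_{(r)}\|_\infty$, and to use that \emph{both} pieces are controlled uniformly in $r\in[1,2]$ and $f\in{\cal C}_H(r,L)$ by Propositions~\ref{lem:bias} and \ref{lem:random} — this is exactly where the $r$-independence of $C_1,C_2$ (hence, ultimately, the uniform Lipschitz property of Theorem~\ref{theorem:uniform_Lip}) is used. The first step is bookkeeping: from the explicit $K_{(r)}$ in (\ref{eqn:K_r}) one verifies the two identities
\[
   C_1 L K_{(r)}^{-r} = \tfrac{1}{2r+1}\,\psi_{(r)}, \qquad
   \psi_{(r)}^2 = \tfrac{2r+1}{2r^2}\,C_2^2\sigma^2\,K_{(r)}\,\tfrac{\log n}{n},
\]
so that by (\ref{equ:bias2}) the bias obeys $\sup_{f\in{\cal C}_H(r,L)}\|\bar f_{(r)}-f\|_\infty \le \frac{1}{2r+1}\psi_{(r)}\le \frac13\psi_{(r)}$ for every $r\in[1,2]$, and the second identity will cancel the $n/K_{(r)}$ factor appearing in the exponent of (\ref{equ:random2}).

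For the probability bound I would subtract the bias and apply (\ref{equ:random2}):
\[
  P_f\big\{\|\hat f_{(r)}-f\|_\infty \ge (1+\sqrt2)\psi_{(r)}\big\}
   \le P_f\big\{\|\hat f_{(r)}-\bar f_{(r)}\|_\infty \ge (1+\sqrt2-\tfrac{1}{2r+1})\psi_{(r)}\big\}
   \le (K_{(r)}+1)\,n^{-c_r},
\]
where, after inserting $u=(1+\sqrt2-\frac{1}{2r+1})\psi_{(r)}$ into (\ref{equ:random2}) and using the second identity above, the exponent is $c_r = \big(1+\sqrt2-\frac{1}{2r+1}\big)^2\,\frac{2r+1}{4r^2}$. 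Checking $r=1$ and $r=2$ (and monotonicity of $c_r$ on $[1,2]$) shows $c_r$ is bounded below on $[1,2]$ by a constant strictly larger than $1+\frac{1}{2r+1}$; since $K_{(r)}+1=O\big((n/\log n)^{1/(2r+1)}\big)=O(n^{1/3})$, the polynomial factor is absorbed and the product is $O(n^{-1})$ uniformly, which gives the first inequality for a suitable $C_4$.

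For the $L^2$ bound I would write $\|\hat f_{(r)}-f\|_\infty^2 \le 2\|\bar f_{(r)}-f\|_\infty^2 + 2\|\hat f_{(r)}-\bar f_{(r)}\|_\infty^2 \le \frac{2}{9}\psi_{(r)}^2 + 2\|\hat f_{(r)}-\bar f_{(r)}\|_\infty^2$, and bound the stochastic second moment by the layer-cake formula,
\[
  \mathbb E\|\hat f_{(r)}-\bar f_{(r)}\|_\infty^2 = \int_0^\infty 2s\,P_f\big(\|\hat f_{(r)}-\bar f_{(r)}\|_\infty > s\big)\,ds
  \le \psi_{(r)}^2 + \int_{\psi_{(r)}}^\infty 2s\,(K_{(r)}+1)\exp\Big\{-\tfrac{n s^2}{2K_{(r)}C_2^2\sigma^2}\Big\}\,ds,
\]
and the remaining Gaussian-type integral evaluates (exactly as in the proof of Lemma~\ref{lem:random-2}) to a term of order $(K_{(r)}+1)\frac{K_{(r)}C_2^2\sigma^2}{n}\,n^{-c_r}=o(\psi_{(r)}^2)$ by the same exponent estimate. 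Hence $\mathbb E\|\hat f_{(r)}-\bar f_{(r)}\|_\infty^2\le C\psi_{(r)}^2$ uniformly, and dividing through by $\psi_{(r)}^2$ gives $\mathbb E(\psi_{(r)}^{-2}\|\hat f_{(r)}-f\|_\infty^2)\le \frac29+2C=:C_5$.

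The main obstacle is not conceptual but quantitative: one must track the absolute constants carefully enough to certify that $c_r$ — which combines the design-free constant $C_2$, the threshold $1+\sqrt2$, and the $r$-dependent balancing exponent $1/(2r+1)$ buried in $K_{(r)}$ — exceeds $1+\frac{1}{2r+1}$ for \emph{all} $r\in[1,2]$, so that the polynomial factor $K_{(r)}+1$ is swallowed. The role of the threshold $1+\sqrt2$ (rather than a smaller constant) is precisely to secure this margin; once it is in place, everything else is the uniform-in-$r$ bias and deviation estimates already established.
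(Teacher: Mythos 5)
Your proposal is correct and follows essentially the same route as the paper's proof: split off the bias as a fixed fraction of $\psi_{(r)}$ via (\ref{equ:bias2}), apply the $r$-uniform deviation bound (\ref{equ:random2}) at the optimal $K_{(r)}$, check that the resulting exponent exceeds $1+\frac{1}{2r+1}$ uniformly on $[1,2]$ so the factor $K_{(r)}+1$ is absorbed into $C_4n^{-1}$, and then bound the second moment by a tail integral of the same deviation inequality. Two harmless bookkeeping remarks: your identity $C_1LK_{(r)}^{-r}=\psi_{(r)}/(2r+1)$ is the one actually implied by (\ref{eqn:K_r}) and the first line of (\ref{eqn:psi}), whereas the paper's proof uses $C_1LK_{(r)}^{-r}=\psi_{(r)}/2$ (matching its displayed closed form of $\psi_{(r)}$, the two lines of (\ref{eqn:psi}) being inconsistent by a constant factor) --- either normalization suffices, since all that is needed is bias $\le c\,\psi_{(r)}$ with $c$ bounded away from $1+\sqrt{2}$ and $\psi_{(r)}^2\asymp \sigma^2 K_{(r)}\log n/n$ uniformly in $r$; and in your second-moment tail integral, with lower limit $\psi_{(r)}$ the exponent is $\frac{2r+1}{4r^2}$ rather than $c_r$, which still exceeds $\frac{1}{2r+1}$, so your $o(\psi_{(r)}^2)$ conclusion and the value of $C_5$ are unaffected.
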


\begin{proof}
Since
\begin{align*}
\|\hat f_{(r)} - f\|_\infty &\le \|\hat f_{(r)} - \bar f_{(r)}\|_\infty + \|\bar f_{(r)} - f\|_\infty\le  \|\hat f_{(r)} - \bar f_{(r)}\|_\infty + C_{1} L K_{(r)}^{-r},
\end{align*}
we have via (\ref{eqn:psi}) and $C_1 L K^{-r}_{(r)} = \psi_{(r)}/2$ that, for any $u\ge 1+\sqrt{2}~$,
\begin{align*}
\sup_{f\in {\cal C}_H(r, L)}P\Big\{  \psi_{(r)}^{-1}\|\hat f_{(r)} - f\|_\infty \ge  u \Big\} &\le P\Big(\|\hat f_{(r)} - \bar f_{(r)}\|_\infty \ge \big(u-{1\over 2} \big)\psi_{(r)}\Big)\\
&\le (K_{(r)}+1) e^{-\log n \cdot {(2u-1)^2 \over 2r+1}}.
\end{align*}
In view of (\ref{eqn:K_r}),  $ K_{(r)}=  q(r) (n/\log n)^{1/(2r+1)} $ for some function $q(\cdot)$ that is positive and continuous on $[1, 2]$. Hence,  a constant $q_*>0$ exists such that $K_{(r)} \le q_*  (n/\log n)^{1/(2r+1)}$ for all $r\in[1,2]$.
Applying this to $u=1+\sqrt{2}$, we obtain a positive constant $C_4$
%
 %
 such that for any $r\in[1,2]$,
\begin{align*}
\sup_{f\in {\cal C}_H(r, L)}P\Big\{  \psi_{(r)}^{-1}\|\hat f_{(r)} - f\|_\infty \ge 1+\sqrt{2}  \Big\} &\le C_4 n^{-{4\sqrt{2}+8\over 2r+1}}(\log n)^{-{1\over 2r+1}}\le C_4 n^{-1}.
\end{align*}
Using the above results,  we further get a positive constant $c_5$ such that 
\begin{align*}
\mathbb E\Big(\psi_{(r)}^{-2}\|\hat f_{(r)} - f\|_\infty^2 \Big) &\le (1+\sqrt{2}~) + \int_{(1+\sqrt{2})}^\infty P\Big( \psi_{(r)}^{-2}\|\hat f_{(r)} - f\|_\infty^2 \ge t  \Big)dt\\
&\le (1+\sqrt{2}) + \int_{(1+\sqrt{2})}^\infty (K_{(r)}+1) \exp\Big\{- {(2t^{1/2}-1)^2 \over 2r+1} \log n\Big\}dt\\
& \le (1+\sqrt{2}) + c_5 \cdot n^{- {1 \over 2r +1 }} \le (1+\sqrt{2}) + c_5 =: C_5.
\end{align*}
 This completes the proof.
\end{proof}

We are ready to prove the theorem.
Observe that
$$\sup_{f\in {\cal C}_H(r, L)} \mathbb E\Big\{\psi_{(r)}^{-1}\|\hat f_{(\hat r)} - f\|_\infty\Big\} \le R^{-} + R^{+}, $$
where
$
R^{-} \,  := \,  \sup_{f\in {\cal C}_H(r, L)} \mathbb E\Big\{\psi_{(r)}^{-1}\|\hat f_{(\hat r)} - f\|_\infty ~ I\{\hat r < r\}\Big\}$,
and
$
R^+ \,:= \, \sup_{f\in {\cal C}_H(r, L)} \mathbb E\Big\{\psi_{(r)}^{-1}\|\hat f_{(\hat r)} - f\|_\infty~ I\{\hat r \ge r\}\Big\}.
$
%
%
Hence, it suffices to show that
\begin{eqnarray}
\limsup_{n\rightarrow\infty} \sup_{r\in [1, 2]} R^{-}  & = & 0, \label{equ:low} \\
\limsup_{n\rightarrow\infty} \sup_{r\in [1, 2]} R^{+} & < & \infty. \label{equ:high}
\end{eqnarray}


We first prove (\ref{equ:low}). By the definition of $\hat r$ given before Theorem  \ref{thm:sup-norm},
\[
R^- \le \sum_{r \, > \, r'\in \mathcal R} \sup_{f\in {\cal C}_H(r, L)} \mathbb E \Big( \psi_{(r)}^{-1}\|\hat f_{(\hat r)} - f\|_\infty ~ I\{\hat r = r'\}\Big) \le \rho_1 + \rho_2,
\]
where, in view of  (\ref{equ:bias2}) and $C_1 L K^{-r'}_{(r')} = \psi_{(r')}/2$ (for $\rho_2$ below),
\begin{align*}
\rho_1 &:= \sum_{r \, > \, r'\in \mathcal R} \sup_{f\in {\cal C}_H(r, L)} P\big( \hat r = r'  \big) \psi_{(r)}^{-1} \Big( {1\over 2}\psi_{(r')} + {\sqrt{2}\over 2}\psi_{(r')}    \Big),\\
\rho_2 &:= \sum_{r \, > \, r'\in \mathcal R}  \psi_{(r)}^{-1} \mathbb E\Big( \Big[  {\psi_{(r')} \over 2} +  \|\hat f_{(r')} -\bar f_{(r')}\|_\infty  \Big] I\{\|\hat f_{(r')} -\bar f_{(r')}\|_\infty \ge {\sqrt{2}\over 2}\psi_{(r')}\}  \Big).
\end{align*}
%
We will prove $\sup_{r\in [1,2]}\rho_j = o(1)$ as $n\rightarrow \infty$, $j=1,2$.
%
%
Since $r'<r$, it is easy to see that there exists a positive constant $c_6$ such that
\[
 {\psi_{(r')}\over \psi_{(r)}} \le c_6 \Big(  {\log n\over n} \Big)^{(r-r')\over (2r+1)(2r'+1)}\le c_6
  \Big(  {\log n\over n} \Big)^{r -r' \over 5(2r+1)} \le c_6.
\]
It thus follows from Lemma \ref{lem:est-r} that, as $n\rightarrow\infty$,
\[
 \rho_1 \le  \tau_n \cdot C_3 \tau_n  n^{-{1\over 2r+1}}  \cdot  {(1+\sqrt{2}) c_6 \over 2}
  \le C_3 {(1+\sqrt{2}) c_6 \over 2}  \log n \cdot n^{-{1\over 2r+1}} \longrightarrow 0.
%
\]
Further, from (\ref{equ:random2}), we deduce the existence of a constant $c_7>0$ such that
\begin{eqnarray*}
\lefteqn{ \sum_{r \, > \, r'\in \mathcal R}  \psi_{(r)}^{-1} \mathbb E\Big(\psi_{(r')}I\{\|\hat f_{(r')} -\bar f_{(r')}\|_\infty \ge {\sqrt{2}\over 2}\psi_{(r')}\} \Big) } \\
&=&\sum_{r \, > \, r'\in \mathcal R}  {\psi_{(r')}\over \psi_{(r)}} P\Big\{\|\hat f_{(r')} -\bar f_{(r')}\|_\infty \ge {\sqrt{2}\over 2}\psi_{(r')}\Big\} 
%
 \, \le \, \tau_n \cdot c_6 \cdot c_7 \cdot n^{-1 \over 2r' +1}.
\end{eqnarray*}
Also, it follows from Lemma \ref{lem:random-2} that a constant $c_8>0$ exists such that for all large $n$,
\begin{align*}
\lefteqn{ \sum_{r \, > \, r'\in \mathcal R}  \psi_{(r)}^{-1}  \mathbb E\Big(   \|\hat f_{(r')} -\bar f_{(r')}\|_\infty I\{\|\hat f_{(r')} -\bar f_{(r')}\|_\infty \ge {\sqrt{2}\over 2}\psi_{(r')}\}  \Big) }  \qquad \qquad  \qquad \\
&= \sum_{r \, > \, r'\in \mathcal R}  \psi_{(r)}^{-1}  c_8 \big( \log n \big)^{{1 \over 2(2 r'+1)}}
\cdot n^{-{r'+3 \over 2 r'+1}}
 \, \le \,    \tau_n \psi_{(r)}^{-1}  c_8 n^{-{1 \over 2}}. 
\end{align*}
%
%
%
By virtue of the above results, we have $\rho_2 \rightarrow 0 $ as $n\rightarrow \infty$. This yields (\ref{equ:low}).
%
%

We now prove (\ref{equ:high}). Consider the random event
$\aleph(r, r') := \big\{\psi_{(r)}^{-1} \|\hat f_{(r')} - f\|_\infty\ge 1+\sqrt{2}, \ r' \in \mathcal R \, \big\}.$
Then
\begin{align*}
R^+ &\le \sup_{f\in {\cal C}_H(r, L)}\sum_{r \le r' \in \cal R} \mathbb E \Big(\psi_{(r)}^{-1}\|\hat f_{(r')} - f\|_\infty I\{\hat r = r' \}\Big)\\
&\le  (1+\sqrt{2}\,) \sup_{f\in {\mathcal C}_H(r, L) } P \{\hat r\ge r\}\\
&~~~+ \sum_{r\le r' \in \cal R }\sup_{f\in {\cal C}_H(r, L)} \mathbb E \Big(\psi_{(r)}^{-1}\|\hat f_{(r')} - f\|_\infty I\{\{\hat r = r'\}\cap \aleph(r, r')\}\Big)\\
&\le (1+\sqrt{2}\,)  + \sum_{r\le r' \in \cal R}\left(\sup_{f\in {\cal C}_H(r, L)} ( \mathbb  E \big(\psi_{(r)}^{-2}\|\hat f_{(r')} - f\|_\infty^2 \big))^{1/2} \sup_{f\in {\cal C}_H(r, L)} \rho_f^{1/2}(r, r')\right),
\end{align*}
where $\rho_f(r, r') := P(\{\hat r= r'\} \cap \aleph(r, r'))$ for $r'\in \cal R$.
Let $r_*:=\min\{ r'\in \mathcal R \, : \, r'>r\}$. Hence, $r_*-r\in [0, 1/\tau_n)$.  In view of (\ref{eqn:psi}), we have $\psi_{(s}) =p(s) \cdot (n^{-1} \log n)^{s/(2s +1)}$ for a function $p(\cdot)$ that is positive and continuous on $[1,2]$. Let $p_*:=\min_{s\in[1,2]} p(s)>0$. This shows that
\[
 \psi_{(r_*)} = p(r_*)  \Big({\log n \over n} \Big)^{{r_* \over (2r_* +1)} } \le \Big[ 1 + {p(r_*)- p(r) \over p_*} \Big] p(r)   \Big({\log n \over n} \Big)^{{r \over (2r +1)} } = \big[1 + o(1) \big] \psi_{(r)}.
\]
Hence, if $\hat r =r' \ge r$, then, by the definition of $\hat r \in \mathcal R$, $\|\hat f_{(r')} - \hat f_{(r_*)}\|_\infty \le {1\over 2}(1+\sqrt{2})\psi_{r_*} \le {1\over 2}(1+\sqrt{2} +o(1)) \psi_{(r)}$. Therefore,
\begin{align*}
\psi_{(r)}^{-1}\|\hat f_{(r')} - f\|_\infty & \le  \psi_{(r)}^{-1} \cdot \Big( \| \hat f_{(r')} - \hat f_{(r_*)} \|_\infty + \|\hat f_{(r_*)} - f\|_\infty \Big) \\
& \le {1\over 2}\big(1+\sqrt{2}+o(1) \big)  + \psi_{(r)}^{-1} \psi_{(r_*)} \cdot \big[  \psi^{-1}_{(r_*)}  \|\hat f_{(r_*)} - f\|_\infty \big] \\
& \le {1\over 2}\big (1+\sqrt{2}+o(1) \big) + [1+o(1)] \cdot \psi_{(r_*)}^{-1}\|\hat f_{(r_*)} - f \|_\infty.
\end{align*}
It follows from Lemma \ref{lem:error-bd} that for all large $n$,
\[
\mathbb E\Big\{\psi_{(r)}^{-2}\|\hat f_{(r')} - f\|_\infty^2\Big\}\le 2\big[4 + (1+o(1))\cdot \mathbb E  \big(\psi_{(r_*)}^{-2}\|\hat f_{(r_*)} - f  \|_\infty^2) \big] \le 2(4+2C_5).
\]

We consider $\rho_f(r, r')$ next. By Lemma \ref{lem:error-bd}, we have, when $r'=r$,
\[
\sup_{f\in {\cal C}_H(r, L)} \rho_f(r, r)\le \sup_{f\in {\cal C}_H(r, L)} P\{ \aleph(r, r) \}\le C_4n^{-1}.
\]
Now consider $\rho_f(r, r')$ with $r'>r$.  Since $\bar f_{(s)}(\cdot)$ is a continuous function  for any $s$, there exists a nonrandom point $t_*\in [0, 1]$ such that
$|\bar f_{(r')}(t_*) - \bar f_{(r_*)}(t_*)| = \|\bar f_{(r')} - \bar f_{(r_*)}\|_\infty.$
Let  $\xi_* := (\hat f_{(r_*)}(t_*) - \bar f_{(r_*)}(t_*)) - (\hat f_{(r')}(t_*) - \bar f_{(r')}(t_*))$
Therefore, if $\hat r=r'>r$, then
\begin{align*}
  \|\bar f_{(r')} - \bar f_{(r_*)}\|_\infty &\le \big |\hat f_{(r')}(t_*) - \hat f_{(r_*)}(t_*) \big| + | \xi_*| \le \|\hat f_{(r')} - \hat f_{(r_*)}\|_\infty  + | \xi_*| \\
  & \le  {1\over 2}\big(1+\sqrt{2}+o(1) \big) \psi_{(r)} + |\xi_*|.
\end{align*}
Using this result and  $\|\bar f_{(r_*)} - f\|_\infty\le \psi_{(r_*)}/2$, we have
\begin{align*}
\|\hat f_{(r')} - f\|_\infty & \le \|\bar f_{(r')} - \bar f_{(r_*)}\|_\infty +  \|\hat f_{(r')} - \bar f_{(r')}\|_\infty+ \|\bar f_{(r_*)} - f\|_\infty \\
& \le {1\over 2}\big(1+\sqrt{2}+o(1) \big) \psi_{(r)} + |\xi_*| + \|\hat f_{(r')} - \bar f_{(r')}\|_\infty +{1+o(1) \over 2}\psi_{(r)}.
\end{align*}
%
%
As a result, for $r<r' \in \mathcal R$, we further deduce via Markov Inequality,
\begin{align*}
\rho_f(r, r') & \le P\Big( |\xi^*| + \|\hat f_{(r')} - \bar f_{(r')}\|_\infty \ge {\sqrt{2}+o(1) \over 2}\psi_{(r)} \Big)\\
& \le P\Big( \|\hat f_{(r')} - \bar f_{(r')}\|_\infty \ge {1.1 \over 2}\psi_{(r_*)} \Big) +  P\Big( |\xi_*|\ge {\sqrt{2}-1.1+o(1) \over 2}\psi_{(r_*)} \Big) \\
& \le P\Big( \|\hat f_{(r')} - \bar f_{(r')}\|_\infty \ge {1.1 \over 2}\psi_{(r_*)} \Big) +  10^2 \cdot \psi_{(r_*)}^{-2}  \cdot\mathbb E( |\xi_*|^2).
%
\end{align*}
It follows from (\ref{equ:random2}) and $ r_* \le r' \le 2 $ that there exist constants $c_9>0$ and $\alpha>0$ (independent of $r\in [1, 2]$) such that  $ r'> r_* \Rightarrow K_{(r_*)} / K_{(r')} \ge   \alpha (n/\log n)^{ 2/(25 \tau_n)} \ge 1$ for all large $n$ and  that 
\begin{align*}
  P\Big( \|\hat f_{(r')} - \bar f_{(r')}\|_\infty \ge {1.1 \over 2}\psi_{(r_*)} \Big) & \le c_9 \Big({n\over \log n} \Big)^{1\over (2r'+1)} \cdot  \exp\Big\{ - {1.21 \over 2 r_*+1}  \cdot { K_{(r_*)} \over K_{(r')} } \log n \Big\}  \\
  & \le c_9 n^{ {1\over (2r'+1)} - {1.21 \over (2r_*+1)} } \le c_9 n^{-{0.21 \over (2r_*+1)} } \le c_9 n^{-{0.2 \over 5} }.
\end{align*}
%
Using Proposition \ref{lem:pointwise}, we also obtain a constant $c_{10}>0$ such that
%
\begin{align*}
\mathbb E( |\xi_*|^2) & \le 2 \Big[ \mathbb E( |\hat f_{(r_*)} (t_*) -  \bar f_{(r_*)} (t_*) |^2) +  \mathbb E( |\hat f_{(r')} (t_*) -  \bar f_{(r')} (t_*) |^2)  \Big] \\
& \le  c_{10} { K_{(r_*)} + K_{(r')} \over n} \le 2 \cdot c_{10} {K_{r(_*)} \over n}.
\end{align*}
Noting that $\psi^2_{(r_*)} \ge  \wt\alpha (K_{(r_*)}/n) \log n$ for a positive constant $\wt\alpha$ independent of $r_*$, we have $\psi_{(r_*)}^{-2}  \cdot  \mathbb E( |\xi_*|^2) \le 2 c_{10} \big (\wt\alpha  \cdot  \log n \big)^{-1} $. Hence,  there exists a constant $\wt c_{10}>0$ (independent of $r, r'\in[1,2]$) such that for all $n$ sufficiently large,
\[
 \rho^{1/2}_f (r, r') \le \wt c_{10} \cdot \big(\log n \big)^{-1/2}   \ \Longrightarrow \  R^+ \le (1+\sqrt{2}) + \sqrt{2( 4+ 2 C_5)} \cdot \wt c_{10}.
\]
This leads to  (\ref{equ:high}), and thus completes the proof of Theorem (\ref{thm:sup-norm}).
%


\section{Proof of Theorem \ref{thm:pointwise}}  \label{sect:proof_pointewise}

We establish  the following lemma to be used for the analysis of the risk of $\wt f(x_0)$.

\begin{lemma}\label{lem:fj1}
Suppose that $f$ is convex and differentiable on $[0, 1]$. Then there exists a positive constant $C_{6}$ independent of $f$ such that for each $j$,
\begin{equation}\label{equ:fj}
\mathbb E \big( |\wt f_j(x_0) - f(x_0)|^2 I_j \big) \, \le \, C_{6} 2^{j} n^{-{4 \over 5} }\sigma^2.
\end{equation}
\end{lemma}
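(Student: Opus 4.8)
The plan is to compare $\wt f_j$ with its noise-free analogue and use the test that defines $I_j$ to absorb the bias into a term of variance order. Since $p=0$ here, the design gives $\Lambda=I$ and $\beta_n=M_{n,j}$, so $\hat b(\bar y)=P_\Omega(\bar y)$ is the Euclidean projection of the vector of bin means onto the cone $\Omega=\{b:\Delta^2 b_k\ge 0\}$, and $\wt f_j$ is the convex piecewise‑linear interpolant of $(\zeta_k,\hat b_k)$ (all quantities taken at level $j$). The first point is that the noise‑free bin means $\bar f_k:=\mathbb E(\bar y_{k,j})=M_{n,j}^{-1}\sum_l f(x_{(k-1)M_{n,j}+l})$ already satisfy $\Delta^2\bar f_k\ge 0$: writing $\bar f_k-\bar f_{k-1}=M_{n,j}^{-1}\sum_l\big(f(x_{(k-1)M_{n,j}+l})-f(x_{(k-2)M_{n,j}+l})\big)$, each summand is, by convexity of $f$ and equal spacing of the $x_i$, a nondecreasing function of $k$, so $\bar f:=(\bar f_k)\in\Omega$ and hence $\hat b(\mathbb E\bar y)=\bar f$. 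Writing $\wt{\bar f}_j$ for the interpolant of $(\zeta_k,\bar f_k)$, I would bound
\[
\mathbb E\big(|\wt f_j(x_0)-f(x_0)|^2 I_j\big)\le 2\,\mathbb E\big|\wt f_j(x_0)-\wt{\bar f}_j(x_0)\big|^2+2\,\mathbb E\big(|\wt{\bar f}_j(x_0)-f(x_0)|^2 I_j\big),
\]
assuming throughout that $n$ is large enough that $x_0\in(\zeta_1,\zeta_{K_{n,j}})$ and $4\le d_n\le K_{n,j}-4$ (here $d_n$ is the index at level $j$).

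For the stochastic term I would rerun the conic‑subdivision argument from the proof of Proposition~\ref{lem:pointwise}. Since $x_0\in(\zeta_{d_n},\zeta_{d_n+1}]$, one has $\wt f_j(x_0)=h^T\hat b(\bar y)$ for a deterministic vector $h\ge 0$ with exactly two nonzero entries summing to $1$, so $\|h\|_2\le 1$; and because $\Lambda=I$, every linear selection function of $\hat b$ is an orthogonal projection $G_\alpha$, whence $\|G_\alpha h\|_2\le\|h\|_2\le 1$ uniformly in $\alpha$. Inserting $\beta_n=M_{n,j}$, $\lambda_{\max}(\Lambda)=1$, and $\max_\alpha\|G_\alpha h\|_2^2\le 1$ in place of $p\,c_{\infty,p}^2$ into the chain of inequalities in the proof of Proposition~\ref{lem:pointwise} yields $\mathbb E|\wt f_j(x_0)-\wt{\bar f}_j(x_0)|^2\le \sigma^2/M_{n,j}=\sigma^2 2^j n^{-4/5}$, already of the advertised order.

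For the bias term the key is to couple $|\wt{\bar f}_j(x_0)-f(x_0)|$ with the deterministic quantity $\mu_j:=\mathbb E\big(\Delta\bar y_{d_n+4,j}-\Delta\bar y_{d_n-2,j}\big)=(\bar f_{d_n+4}-\bar f_{d_n+3})-(\bar f_{d_n-2}-\bar f_{d_n-3})\ge 0$. Splitting $\wt{\bar f}_j(x_0)-f(x_0)$ into the chord error $(1-t)f(\zeta_{d_n})+t f(\zeta_{d_n+1})-f(x_0)$ (with $x_0=(1-t)\zeta_{d_n}+t\zeta_{d_n+1}$, $t\in[0,1)$) and the two averaging errors $\bar f_{d_n}-f(\zeta_{d_n})$ and $\bar f_{d_n+1}-f(\zeta_{d_n+1})$, convexity of $f$ (a convex differentiable function is $C^1$ with nondecreasing $f'$) bounds each of the three by $K_{n,j}^{-1}\big(f'(\kappa_{d_n+1})-f'(\kappa_{d_n-1})\big)$; on the other hand the explicit forms $\bar f_{d_n+4}-\bar f_{d_n+3}\ge K_{n,j}^{-1}f'(\kappa_{d_n+2})$ and $\bar f_{d_n-2}-\bar f_{d_n-3}\le K_{n,j}^{-1}f'(\kappa_{d_n-2})$ give
\[
|\wt{\bar f}_j(x_0)-f(x_0)|\le 2K_{n,j}^{-1}\big(f'(\kappa_{d_n+1})-f'(\kappa_{d_n-1})\big)\le 2K_{n,j}^{-1}\big(f'(\kappa_{d_n+2})-f'(\kappa_{d_n-2})\big)\le 2\mu_j .
\]

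To finish, note $I_j\le I\{S_j\le\theta_j\}$, where $S_j:=\Delta\bar y_{d_n+4,j}-\Delta\bar y_{d_n-2,j}=\mu_j+\xi_j$ with $\xi_j\sim N(0,4\sigma^2/M_{n,j})$ and $\theta_j:=\lambda\,2^{j/2+1}n^{-2/5}\sigma=\lambda\sqrt{\mathrm{Var}(\xi_j)}$. Since $\wt{\bar f}_j(x_0)$ is deterministic, $\mathbb E\big(|\wt{\bar f}_j(x_0)-f(x_0)|^2 I_j\big)\le 4\mu_j^2\,P(S_j\le\theta_j)$. If $\mu_j\le 2\theta_j$ this is at most $16\theta_j^2=64\lambda^2\sigma^2 2^j n^{-4/5}$; if $\mu_j>2\theta_j$ then $P(S_j\le\theta_j)\le P(\xi_j\le-(\mu_j-\theta_j))\le\exp(-\mu_j^2 M_{n,j}/(32\sigma^2))$, and since $u\mapsto ue^{-u}$ is bounded, $4\mu_j^2 P(S_j\le\theta_j)\le 128\,e^{-1}\sigma^2/M_{n,j}$. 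Either way the bound is $O(\sigma^2 2^j n^{-4/5})$ with constant depending only on $\lambda$, and adding the two pieces proves the lemma with $C_6$ depending only on $\lambda$. I expect the stochastic estimate to be the real obstacle: the crude $\ell_2$‑nonexpansiveness of $P_\Omega$ only gives $\mathbb E|\wt f_j(x_0)-\wt{\bar f}_j(x_0)|^2\le K_{n,j}\sigma^2/M_{n,j}$, off by the factor $K_{n,j}$, so one genuinely needs the piecewise‑linear / conic‑subdivision structure together with the $2$‑sparsity of $h$; by contrast the bias coupling is elementary once the windows of bins around $d_n$ are lined up correctly.
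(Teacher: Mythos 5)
Your proposal is correct and follows essentially the same route as the paper's proof: the same decomposition into a stochastic term and a deterministic bias term coupled to $\tau_j=\mathbb E\big(\Delta\bar y_{d_n+4,j}-\Delta\bar y_{d_n-2,j}\big)$, and the same case split on whether $\tau_j$ exceeds a multiple of the threshold $\lambda 2^{j/2+1}n^{-2/5}\sigma$, finished with the Gaussian tail bound and the boundedness of $u\mapsto u e^{-u}$. The only differences are cosmetic: you re-derive the variance bound by specializing the argument of Proposition~\ref{lem:pointwise} to $p=0$ (where the selection functions are orthogonal projections and the interpolation vector $h$ is $2$-sparse) rather than citing it, and you couple the bias to $\tau_j$ via $f'$ at the knots instead of via second differences of $f$ at the $\zeta_k$'s, while also making explicit the (implicit in the paper) fact that the noise-free bin means already lie in $\Omega$.
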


\begin{proof} 
%
Recall that $\zeta_k = {1\over n}((k-1)M_n+{M_n+1 \over 2}), \forall \, k=1, \ldots, K_{n, j}$
%
%
Let $\grave f$ and $\check f$ be two piecewise linear functions such that $\grave f(\zeta_k) = \mathbb E(\bar y_{k, j}) $ and $\check f(\zeta_k) = f(\zeta_k)$, respectively. Note that if $M_n$ is odd, then  $\zeta_k$ is a design point so that
$\mathbb E(\bar y_k) - f(\zeta_k)=0$. Otherwise, direct calculation yields that
\begin{align*}
&\mathbb E(\bar y_k) - f(\zeta_k)  = {1\over M_n} \sum_{j=1}^{M_n} \Big[f(x_{(k-1)M_n+j}) -f(\zeta_k)\Big]\\
=&{1\over M_n} \sum_{j=1}^{M_n/2} \Big[\big(f(x_{(k-1)M_n+M_n-j+1}) - f(\zeta_k)\big) - \big(f(\zeta_k)-f(x_{(k-1)M_n+j})\big)\Big]\\
=&{1\over M_n} \sum_{j=1}^{M_n/2} {{M_n\over 2} - j\over n} \Bigg [{f(x_{(k-1)M_n+M_n-j+1}) - f(\zeta_k)\over {({M_n\over 2} - j)/ n}} - {f(\zeta_k)-f(x_{(k-1)M_n+j})\over {({M_n\over 2} - j)/ n}}\Bigg].
\end{align*}
Since $f$ is a convex function,  we have $\mathbb E(\bar y_k) - f(\zeta_k) \ge 0$ and
\begin{align*}
\mathbb E(\bar y_k) - f(\zeta_k) & \le {1\over M_n} \sum_{j=1}^{M_n/2} {M_n/2\over n} \Bigg [{f(\zeta_{k+1}) - f(\zeta_k)\over 1/K_{n,j}} - {f(\zeta_k)-f(\zeta_{k-1})\over 1/K_{n,j}}\Bigg ]\\
& = {1\over 4} \Delta^2 f(\zeta_{k+1}).
\end{align*}
Hence, for any $x_0\in (\zeta_{d_n}, \zeta_{d_n+1}]$,
\begin{align*}
0\le \grave f(x_0) - \check f(x_0) &\le {1\over 4}\max\Big\{ \Delta^2 f(\zeta_{d_n+1}), \Delta^2 f(\zeta_{d_n+2}) \Big\} \le \Delta^2 f(\zeta_{d_n+1}) + \Delta^2 f(\zeta_{d_n+2}) \\
& \le \Delta f(\zeta_{d_n+2}) - \Delta f(\zeta_{d_n}).
\end{align*}
Further,  since $f$ is convex, $f(x_0) - f(\zeta_{d_n}) \ge f'(\zeta_{d_n})(x_0-\zeta_{d_n})$ such that
\begin{align*}
0\le \check f(x_0) - f(x_0) &= f(\zeta_{d_n})+K_n(f(\zeta_{d_n+1}) - f(\zeta_{d_n}))(x_0 - \zeta_{d_n}) -f(x_0) \nonumber\\
& \le [ f(\zeta_{d_n+1}) - f(\zeta_{d_n})-K_n^{-1}f'(\zeta_{d_n})] K_n(x_0-\zeta_{d_n}) \nonumber\\
& \le [ f(\zeta_{d_n+1}) - f(\zeta_{d_n})] - [f(\zeta_{d_n}) - f(\zeta_{d_n-1})] \\
& \le \Delta f(\zeta_{d_n+2}) - \Delta f(\zeta_{d_n}).
\end{align*}
Let $\tau_j := \mathbb E(\Delta\bar y_{d_n+4, j} - \Delta \bar y_{d_n-2, j})$.  Hence,
$0\le \grave f(x_0) - f(x_0) \le 2 \tau_j$.

Notice that for $x_0\in (\zeta_{d_n}, \zeta_{d_n+1}]$, there exists $\mu \in(0, 1]$ such that $\wt f_j(x_0)= \mu \hat f (\zeta_{d_n}) + (1-\mu) \hat f (\zeta_{d_n+1})$ and $\grave f (x_0)= \mu \bar f (\zeta_{d_n}) + (1-\mu) \bar f (\zeta_{d_n+1})$, where $\hat f$ and $\bar f$ are the piecewise constant  splines (i.e., $p=0$) corresponding to the convex constrained least squares  for $(\bar y_{k, j})$ and $(\mathbb E(\bar y_{k, j}))$, respectively. It follows  from Proposition \ref{lem:pointwise} that  a positive constant $c_{11}$ exists such that
\begin{align}
  & \mathbb E[ (\wt f_j(x_0) - \grave f(x_0) )^2 I_j] \, \le \, 2 \big(  \mu^2 \mathbb E | \hat f(\zeta_{d_n}) - \bar f(\zeta_{d_n})|^2 + (1-\mu)^2  \mathbb E | \hat f(\zeta_{d_n+1}) - \bar f(\zeta_{d_n+1})|^2 \big)  \label{eqn:wt_f-grave f}  \\
  & \qquad \, \le \, 2 [ \mu + (1-\mu)] C_{3} \sigma^2 K_{n,j} \cdot n^{-1} \, \le \, c_{11} \sigma^2 2^j n^{-4/5}. \nonumber
\end{align}
%
This implies
\begin{align*}
& \mathbb E[(\wt f_j(x_0) - f(x_0))^2 I_j] \, \le  \, 2 \Big(  \mathbb E[ (\grave f_j(x_0) - f(x_0) )^2 I_j] +  \mathbb E[ (\wt f_j(x_0) - \grave f(x_0) )^2 I_j] \Big) \\
 & \qquad   \le \, 8 \tau_j^2  \mathbb E (I_j) + 2  \mathbb E[(\wt f_j(x_0) - \grave f(x_0))^2]  \le  8\tau_j^2  \mathbb E (I_j)  + c_{11}\sigma^2 2^{j+1} n^{-4/5}.
\end{align*}
%
If $\tau_j \le 2\lambda 2^{j/2+1} n^{-2/5}\sigma$, then (\ref{equ:fj}) holds.
We thus only consider the case when $\tau_j > 2\lambda 2^{j/2+1} n^{-2/5}\sigma$. In this case, note that $\Delta\bar y_{d_n+4, j} - \Delta \bar y_{d_n-2, j}$  has a normal distribution with mean $\tau_j$ and variance $2^{j+2}n^{-4/5}\sigma^2$. Consequently,
\begin{align*}
 \mathbb E (I_j)  &\le \mathbb E I\Big(\Delta\bar y_{d_n+4, j} - \Delta \bar y_{d_n-2, j} \le \lambda 2^{ {j \over 2}+1} n^{-{2 \over 5} } \sigma \Big)\\
& \le P\Bigg(Z \le \lambda - {\tau_j n^{{2 \over 5} }\over 2^{{ j \over 2}+1}\sigma}\Bigg)\le P\Bigg(Z \le - {\tau_j n^{{2 \over 5} }\over 2^{{ j \over 2}+2}\sigma}\Bigg)\le \exp\Bigg(-{\tau_j^2 n^{{4 \over 5}}\over 2^{(j+5)}\sigma^2}\Bigg),
\end{align*}
where $Z$ is a standard normal random variable.
In view of $\sup_{z>0} z \exp(-{\rho^2 z \over 2}) = 2 \rho^{-2}e^{-1}$, we obtain $\tau_j^2 \mathbb E (I_j) \le 2e^{-1} 2^{j+4}n^{-4/5}\sigma^2$. Hence (\ref{equ:fj}) holds.
\end{proof}


With this lemma, we show as follows:

\begin{proof}[Proof of Theorem \ref{thm:pointwise}]
%
%
%
Recall that $K_{n, j} := 2^j n^{1/5}$.
If $f\in {\cal C}_H(r, L)$ with $r\in [1, 2]$, then
$\tau_j = \Delta f(\zeta_{{d_n}+2}) - \Delta f(\zeta_{{d_n}}) \le 2 L K_{n, j}^{-r} = 2 L (2^{-j}n^{-1/5})^{r}.$
Let $J$ be the smallest natural number (dependent on $r$) such that
\begin{equation}\label{equ:J}
2^J n^{-{4 \over 5} } \ge L^{{ 2 \over 2r+1} } \sigma^{-{ 2 \over 2r+1} } n^{-{2r \over 2r+1} }.
\end{equation}
If $j\ge J$, then $2^{j r} \ge L^{2r/(2r+1)} \sigma^{-2r/(2r+1)} n^{(4r-2r^2)/(10r+5)}$.
%
%
Hence, this shows via (\ref{equ:J}) that, for $j\ge J$,
\begin{equation}\label{equ:tauJ}
     \tau_j \le 2 L \cdot 2^{-jr} \cdot n^{-{r \over 5} }  \le 2  L^{ {1 \over 2r+1}}\sigma^{{2r \over 2r+1} } n^{- {r \over 2r+1}}\le 2 \cdot 2^{{J \over 2} }n^{-{2 \over 5} }\sigma.
\end{equation}

Recall that $P(Z > \lambda)< 1/4$ for the positive $\lambda$. Then there exists a $\delta>0$ such that $\gamma:=P(Z\ge \lambda -\delta)<1/4$. Note that $4\gamma<1$. Given this $\delta$, choose $K \in \mathbb N$ (independent of $r$) such that $1<\delta \cdot 2^{K/2}$.  Since only one $I_j\ne 0$ and $I^2_j = I_j$, $|\wt f(x_0) - f(x_0)|^2 = \sum^\infty_{j=0} (\wt f_j (x_0) - f(x_0))^2 I_j  $ such that the risk of $\wt f$ is decomposed into two terms:
\begin{equation}\label{equ:sum}
\mathbb E|\wt f(x_0) - f(x_0)|^2 = \sum_{j=0}^{J+K} \mathbb E [(\wt f_j(x_0) - f(x_0))^2 I_j] + \sum_{j=J+K+1}^\infty \mathbb E[ (\wt f_j (x_0) - f(x_0))^2  I_j ].
\end{equation}

We consider the first sum in (\ref{equ:sum}).
It follows from Lemma \ref{lem:fj1} that
\[
\sum_{j=0}^{J+K} \mathbb E [ (\wt f(x_0) - f(x_0))^2 I_j ]   \, \le \, \sum_{j=0}^{J+K} C_6 2^j n^{-{4 \over 5} }  \sigma^2 \, \le \, C_6 2^{K+1} 2^{J}n^{-{4 \over 5} }\sigma^2.
\]
Since $J$ is the smallest integer satisfying (\ref{equ:J}), we have
\begin{equation}\label{equ:J2}
2^J n^{-{4 \over 5} } \, \le \, 2 \cdot L^{ {2\over 2r+1 } }\sigma^{-{ 2\over 2r+1} }  n^{-{2r \over 2r+1} }.
\end{equation}
Therefore,
\begin{equation}\label{equ:sum1}
\sum_{j=0}^{J+K} \mathbb E [(\wt f(x_0) - f(x_0))^2 I_j] \,  \le \, C_6 2^{K+2}L^{ {2 \over 2r+1} }\sigma^{{ 4r \over 2r+1} }  n^{- {2r \over 2r+1} }.
\end{equation}

Consider the second sum in (\ref{equ:sum}). We show two technical results. Firstly,
since the design points corresponding to $\Delta\bar y_{d_n+4, j} - \Delta \bar y_{d_n-2, j}$ are disjoint for different $j$,
$\Delta\bar y_{d_n+4, j} - \Delta \bar y_{d_n-2, j}$ are independent. Hence,
for $j>J+K$,
\begin{align*}
 \mathbb E (I_j)  &\le \prod_{i=J+K}^{j-1} P\Big(\Delta\bar y_{d_n+4, i}  -\Delta \bar y_{d_n-2, i} > \lambda 2^{ {i \over 2}+1} n^{-{2 \over 5} }\sigma \Big) \\
&\le \Big\{ P\Big(Z>\lambda - {1\over 2^{K/2 }}\Big) \Big\}^{j-J-K} \, \le \, \gamma^{j-J-K}.
\end{align*}
Secondly, in view of the argument for (\ref{eqn:wt_f-grave f}) and Proposition \ref{lem:pointwise}, we have $\mu \in[0, 1]$ and a constant $c_{12}>0$ such that
\begin{align*}
  \lefteqn{ \mathbb E |\wt f_j(x_0) - \grave f(x_0) |^4 } \\
  & \qquad  \, \le \, 8 \big(  \mu^4 \mathbb E | \hat f(\zeta_{d_n}) - \bar f(\zeta_{d_n})|^4 + (1-\mu)^4  \mathbb E | \hat f(\zeta_{d_n+1}) - \bar f(\zeta_{d_n+1})|^4 \big)  \nonumber \\
  & \qquad \, \le \, 8 C_{4} \sigma^4 K^2_{n,j} \cdot n^{-2} \, \le \, c^2_{12} \sigma^4 \cdot \big( 2^j  \cdot n^{-{4 \over 5} } \big)^2. 
\end{align*}
Using these results and $\gamma\in (0,1/4)$, we obtain a constant $c_{13}>0$ such that
\begin{align*}
& \quad \sum_{j=J+K+1}^\infty \mathbb E \Big( (\wt f(x_0) - \grave f(x_0))^2 I_j \Big) \\
& \le \sum_{j=J+K+1}^\infty \Big(\mathbb E |\wt f(x_0) - \grave f(x_0)|^4 \cdot \mathbb E (I_j^2) \Big)^{1/2}\, \le \, \sum_{j=J+K+1}^\infty c_{12} 2^j n^{-{4 \over 5} } \sigma^2 \cdot \gamma^{{ (j-J-K) \over 2 } } \\
&\le \sum_{j=J+K+1}^\infty c_{12} (4\gamma)^{(j-J)/2} \gamma^{-K/2} 2^J n^{-{4 \over 5} }\sigma^2 \, \le \, c_{13} 2^J n^{-{4 \over 5} }\sigma^2.
\end{align*}
%
Furthermore, since $f$ is convex,  $\tau_j= \mathbb E(\Delta\bar y_{d_n+4, j} - \Delta \bar y_{d_n-2, j})$ is a  decreasing function  of $j$. Therefore,  in view of $0\le \grave f(x_0) - f(x_0) \le 2 \tau_j$ and (\ref{equ:tauJ}),
$$ \sum_{j=J+K+1}^\infty \mathbb E \Big( (\grave f(x_0) - f(x_0))^2 I_j  \Big) \, \le \, \sum_{j=J+K+1}^\infty  4 \tau_J^2 \cdot \mathbb E (I_j)  \, \le \, 4 \tau_J^2 \le 16 \cdot 2^J n^{-{4 \over 5} }\sigma^2.$$
By virtue of the above results and (\ref{equ:J2}), we obtain  $c_{14}>0$ such that
\begin{equation}\label{equ:sum2}
\sum_{j=J+K+1}^\infty \mathbb E [(\wt f(x_0) - f(x_0))^2 I_j]  \, \le \, c_{14} L^{{ 2 \over 2r+1} }\sigma^{{ 4r \over 2r+1} }  n^{-{ 2r \over 2r+1} }.
\end{equation}
Hence, the theorem follows by combining (\ref{equ:sum1}) and (\ref{equ:sum2}).
\end{proof}

%
\section{Proof of Theorem \ref{thm:sig}} \label{sect:proof_sig}

Recall that $\hat f_y := \big( \hat f^{[p]} (x_1), \ldots, \hat f^{[p]}(x_n)\big)^T$ with coefficient matrices $A_\alpha $ defined in (\ref{eqn:A_alpha})  and
$\vec f :=  \big( f(x_1), \ldots, f(x_n)\big)^T$.  

\begin{lemma}\label{lem:sig1}
%
%
Fix a spline degree $p$.
The following hold:
\begin{enumerate}
  \item [(1)]  For any index set $\alpha$, $0 \le \mathrm{trace}(A_\alpha)  \le c_{\infty, p} (K_n+p)$;
  \item [(2)]
$
\mathbb E\big[\big\langle y - \vec f , \hat f_y - \vec f~ \big\rangle\big] \, = \, \sigma^2 \mathbb E \big[\mathrm{trace}\big(A_{\alpha(y)}  \big)\big] \, \le \,
  c_{\infty, p} \, (K_n+p) \, \sigma^2.
$
  \end{enumerate}
\end{lemma}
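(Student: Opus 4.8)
The plan is to reduce both statements to the uniform bound of Theorem~\ref{theorem:uniform_Lip}(1) together with the classical fact that for a linear (least-squares-type) map $y\mapsto A_{\alpha}y$ with $A_\alpha = XF_\alpha^T(F_\alpha X^TX F_\alpha^T)^{-1}F_\alpha X^T$, the trace equals the rank of the projector it represents. First I would observe that $A_\alpha$ is an oblique (actually orthogonal, since $X^TX$ is the Gram matrix in the relevant inner product) projection onto the column space of $XF_\alpha^T$; hence $A_\alpha^2 = A_\alpha$, its eigenvalues are $0$ and $1$, and $\mathrm{trace}(A_\alpha) = \rank(XF_\alpha^T) = \rank(F_\alpha) = \ell = K_n + p - |\alpha| \ge 0$. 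This already gives the lower bound $0\le \mathrm{trace}(A_\alpha)$ in Statement~(1), and in fact the cruder bound $\mathrm{trace}(A_\alpha)\le K_n+p$ directly. To get the stated bound with the constant $c_{\infty,p}$ I would instead write $\mathrm{trace}(A_\alpha) = \mathrm{trace}\big(F_\alpha^T(F_\alpha\Lambda F_\alpha^T)^{-1}F_\alpha \cdot (X^TX/\beta_n)\big)\cdot$(bookkeeping with $\beta_n$), i.e. relate $A_\alpha$ to $G_\alpha := F_\alpha^T(F_\alpha\Lambda F_\alpha^T)^{-1}F_\alpha$ via $A_\alpha = \tfrac1{\beta_n} X G_\alpha X^T$; then $\mathrm{trace}(A_\alpha) = \tfrac1{\beta_n}\mathrm{trace}(G_\alpha X^TX) = \mathrm{trace}(G_\alpha\Lambda)$. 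Since $G_\alpha\Lambda$ has the same nonzero eigenvalues as $G_\alpha^{1/2}\Lambda G_\alpha^{1/2}\succeq 0$... wait, $G_\alpha$ is only positive semidefinite, but one can still bound $\mathrm{trace}(G_\alpha\Lambda)\le \|\Lambda\|_2\,\mathrm{trace}(G_\alpha)$, or more simply bound $\mathrm{trace}(A_\alpha)$ by the sum of its diagonal entries $\sum_i (A_\alpha)_{ii}\le n\max_i(A_\alpha)_{ii}$, each $(A_\alpha)_{ii} = N(x_i)^T G_\alpha N(x_i)/\beta_n \le \|G_\alpha\|_\infty \|N(x_i)\|_1^2/\beta_n \le c_{\infty,p}/\beta_n$ (using $\|N(x_i)\|_1 = 1$), so $\mathrm{trace}(A_\alpha)\le n c_{\infty,p}/\beta_n \le c_{\infty,p}(K_n/C_{\beta,p})$ by~(\ref{eqn:beta_n}); absorbing constants (or just using the plain bound $\mathrm{trace}=\rank\le K_n+p$) gives Statement~(1).

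For Statement~(2), the key identity is Stein's lemma / the divergence formula: since $y = \vec f + \sigma\epsilon$ with $\epsilon\sim N(0,I_n)$ and $y\mapsto \hat f_y$ is continuous piecewise linear (hence Lipschitz and almost-everywhere differentiable, with Jacobian $A_{\alpha(y)}$ on the interior of each polyhedral cell), we have
\[
\mathbb E\big[\langle y-\vec f,\ \hat f_y - \vec f\rangle\big] \;=\; \sigma\,\mathbb E\big[\langle \epsilon,\ \hat f_y\rangle\big] \;=\; \sigma^2\,\mathbb E\big[\,\mathrm{div}\,\hat f_y\,\big] \;=\; \sigma^2\,\mathbb E\big[\mathrm{trace}(A_{\alpha(y)})\big].
\]
The middle equality is exactly Stein's identity applied coordinatewise; it is legitimate because $\hat f_y$ is globally Lipschitz (by Theorem~\ref{theorem:uniform_Lip}(2) composed with the linear map $y\mapsto X^Ty/\beta_n$) so it is absolutely continuous along a.e. line and the boundary set between cells has Lebesgue (hence Gaussian) measure zero, where the piecewise-constant Jacobian $A_{\alpha(y)}$ serves as the a.e. derivative. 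Combining with Statement~(1) yields $\mathbb E[\langle y-\vec f,\hat f_y-\vec f\rangle] = \sigma^2\mathbb E[\mathrm{trace}(A_{\alpha(y)})]\le c_{\infty,p}(K_n+p)\sigma^2$, and also the nonnegativity claim (the trace is nonnegative since each $A_\alpha$ is a projector).

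The main obstacle I anticipate is making the application of Stein's identity fully rigorous for a merely piecewise-linear (not $C^1$) map: one must verify the weak-differentiability hypothesis, namely that $\hat f_y$ is Lipschitz (which follows from Theorem~\ref{theorem:uniform_Lip}) and that its distributional gradient coincides a.e. with $A_{\alpha(y)}$, which in turn requires knowing the union of the relative boundaries of the polyhedral cells of the conic subdivision (cf.\ the construction in the proof of Proposition~\ref{lem:pointwise}) is a finite union of lower-dimensional affine pieces and hence Gaussian-null. An alternative, slightly more elementary route avoiding Stein entirely is to condition on the cell: on each cone $\mathcal C_j$ one has $\hat f_y = g^j(y)$ linear with Jacobian $A_{\alpha_j}$, write $\langle y-\vec f,\hat f_y-\vec f\rangle$ explicitly, and sum $\mathbb E[\mathbf 1_{\{y\in\mathcal C_j\}}\langle\cdots\rangle]$ — but this reintroduces boundary-term bookkeeping across cells, so the Stein route with a careful measure-zero argument is cleaner. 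Either way the arithmetic is routine once the regularity point is settled.
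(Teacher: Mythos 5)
Your argument for Statement (2) is essentially the paper's own: both apply Stein's identity to the continuous piecewise linear map $y\mapsto \hat f_y$, using the conic subdivision to identify the a.e.\ Jacobian with $A_{\alpha(y)}$ and the Gaussian-nullity of the cell boundaries to justify almost differentiability (the paper invokes Stein's ``almost differentiable'' definition and checks $\mathbb E\|\partial h/\partial y\|<\infty$, which your global-Lipschitz observation covers). For Statement (1), however, you take a genuinely different and cleaner route: noting that $A_\alpha = W(W^TW)^{-1}W^T$ with $W=XF_\alpha^T$ is an orthogonal projector gives $\mathrm{trace}(A_\alpha)=\rank(W)=\ell=K_n+p-|\alpha|\le K_n+p$ exactly, whereas the paper writes $\mathrm{trace}(A_\alpha)=\mathrm{trace}(G_\alpha\Lambda)$ and bounds it entrywise by $\|G_\alpha\|_\infty\le c_{\infty,p}$ (Theorem~\ref{theorem:uniform_Lip}) together with $|(\Lambda)_{ij}|\le 1$, which is what produces the constant $c_{\infty,p}$ in the statement. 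Two small bookkeeping points: your projector bound yields the stated inequality only because $c_{\infty,p}\ge 1$ (true, e.g.\ $\alpha=\emptyset$ gives $G_\alpha=\Lambda^{-1}$ and $\|\Lambda^{-1}\|_\infty\ge 1/\lambda_{\min}(\Lambda)\ge 1$, since some diagonal entry of $\Lambda$ equals $1$), which you did not verify; and your alternative diagonal-entry bound via \eqref{eqn:beta_n} produces the constant $c_{\infty,p}/C_{\beta,p}$ rather than $c_{\infty,p}$. Neither affects the use of the lemma in Theorem~\ref{thm:sig}, where only the $O(K_n)$ order matters, and your exact identity $\mathrm{trace}(A_\alpha)=\ell$ is in fact sharper information than the paper records.
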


\begin{proof}
(1) Since $A_\alpha$ is symmetric positive semidefinite, its trace is nonnegative.
 For a given $\alpha$, recall that
$G_\alpha := F^T_\alpha
(F_\alpha \Lambda F^T_\alpha)^{-1} F_\alpha$. Hence $A_\alpha = X
G_\alpha X^T/\beta_n$.
%
%
By virtue of $\| G_\alpha \|_\infty \le c_{\infty,
p}$ from Theorem~\ref{theorem:uniform_Lip} and  $|(\Lambda)_{ij}| \le 1$ for all $i, j$ (by the
definition of $\beta_n$), we have, for any $\alpha$,
$
  0 \le \mathrm{trace}\big( A_\alpha \big) = \mathrm{trace} \Big( G_\alpha \cdot \frac{X^T X}{\beta_n} \Big)
  = \mathrm{trace} ( G_\alpha \Lambda ) \le c_{\infty, p} (K_n + p).
$

(2)  Since  $\hat f_y:\mathbb R^n \rightarrow
\mathbb R^n$ is continuous and piecewise linear, it admits a conic subdivision of
$\mathbb R^n$ \cite{FPang03_book, Scholtes94_thesis}, i.e., there
exist a finite collection of polyhedral cones  $\{ \mathcal C_j
\}^\ell_{j=1}$ and linear functions $\{ g^j \}^\ell_{j=1}$  satisfying the similar conditions as specified in the proof of Proposition~\ref{lem:pointwise}. In particular, each cone $\mathcal C_j$ has nonempty interior and
%
%
$\hat f_y$ coincides with $ g^j$ on each $\mathcal C_j$. Clearly,
$g^j(y)= A_\alpha y$ for some index set $\alpha$. In this case, we write the cone
$\mathcal C_j$ as $\mathcal C_\alpha$. Let $\mbox{int} (\mathcal C_\alpha)$
denote the interior of $ \mathcal  C_\alpha$.
Obviously, $\hat f_y$ is differentiable on $\mbox{int}( \mathcal
C_\alpha)$. Indeed, the (Fr\'{e}chet-)derivative of $\hat f_y$ is
$A_\alpha$ for any $y \in \mbox{int}( \mathcal C_\alpha)$.
Let $h(y):= \hat f_y-\vec f$. Since $\mathbb R^n \setminus \big( \bigcup_j \mbox{int}
(\mathcal C_j) \big)$ has zero measure,  $h$ is almost
differentiable on $\mathbb R^n$ in the sense of \cite[Definition
1]{stein_81}. Let $\phi(\bf z)$ be the standard normal density on
$\mathbb R^n$ with variance $\sigma^2$. We have
\begin{align*}
   \mathbb E\Big(\Big \| \frac{\partial h}{\partial y}(y) \Big\| \Big) &= \int_{ \bigcup_j
\mbox{int} (\mathcal C_j) } \Big \| \frac{\partial h}{\partial y}
\big({\bf z}+\vec f \big) \Big\| \phi( {\bf z}) d {\bf z} \\
& = \sum_\alpha
\int_{{\bf z}+\vec f \, \in  \, \mbox{int}(\mathcal C_\alpha)} \|A_\alpha\|
\phi({\bf z}) d {\bf z} \le
 \max_\alpha \| A_\alpha\| <\infty.
\end{align*}
Letting $Z:=\sigma(\epsilon_1, \ldots, \epsilon_n)^T$, we have $\mathbb E[\langle y - \vec f, \hat f_y - \vec f\rangle] = \mathbb E[\langle
Z, h(y) \rangle] = \mathrm{trace}\big( \mathbb E[Z \cdot h^T(y) ]\big)$.
By the above results and Stein's Lemma \cite[Lemma 2]{stein_81}, 
\begin{align*}
 \lefteqn{ \mathrm{trace}\big( \mathbb E[Z \cdot h^T(y) ]\big) }  \\
  & = \sigma^2
  \mathrm{trace}\Big(\mathbb  E \Big[ \frac{\partial h}{\partial y}(y) \Big]  \Big)
   =  \sigma^2 \int_{ \bigcup_j \mbox{int} (\mathcal C_j) } \mathrm{trace}\Big( \frac{\partial h}{\partial
  y} ({\bf z}+\vec f ) \Big) \phi( {\bf z})   d {\bf z} \\
  & =  \sigma^2 \sum_\alpha \int_{} \mathrm{trace}\Big( \frac{\partial h}{\partial
  y} ({\bf z}+\vec f ) \Big) \phi( {\bf z}) \cdot I_{ \{ {\bf z} | {\bf z} + \vec f \in \mbox{int}(\mathcal C_\alpha) \} } d {\bf z} \\
  & =  \sigma^2 \sum_\alpha \int_{} \mathrm{trace}\big( A_\alpha \big) \phi( {\bf z}) \cdot I_{ \{ {\bf z} | {\bf z} + \vec f \in \mbox{int}(\mathcal C_\alpha) \} } d {\bf
  z}  \, =\, \sigma^2 \mathbb E\big (\mathrm{trace}(A_{\alpha(y)} ) \big).   
%
%
\end{align*}
%
%
Statement (2) thus follows from (1).
\end{proof}

Equipped with the above lemma,  we have the proof of Theorem \ref{thm:sig} below.

\begin{proof}[Proof of Theorem \ref{thm:sig}]
The MLE of $\sigma^2$ is $\hat\sigma^2 = \|y - \hat f_y\|_2^2/n$.
%
%
Let $R_n :=  \|y - \vec f\|_2^2-\|y-\hat f_y\|_2^2 $ such that $ \hat \sigma^2 =  \|y - \vec f\|_2^2/n - R_n/n$.
%
%
Since
\begin{eqnarray*}
 R_n \, = \,   \|y - \vec f\|_2^2-\|y-\hat f_y\|_2^2
\, =\,   2\langle y - \vec f, \hat f_y - \vec f \, \rangle - \|\hat f_y - \vec f\|_2^2,
\end{eqnarray*}
we have, by Lemma \ref{lem:sig1} and (\ref{eqn:statement_2}),
\begin{align*}
 \big| \mathbb E(R_n) \big | &  \le \,  2\sigma^2 \mathbb E(\mathrm{trace}(A_{\alpha(y)})) + \mathbb E \|\hat f_y - \vec f\|_2^2  \\
 & \le 2\sigma^2 c_{\infty,p} (K_n+p) + n \big( C^2_{1r} L^2 K^{-2r}_n + C_{3r} \sigma^2 K_n \, n^{-1} \big),
\end{align*}
where $p=\lceil r-1 \rceil$. This shows that $|\mathbb E(R_n)| = O(K_n + n K^{-2r}_n)$.
Hence, we deduce that (i) if $K_n=o(n)$ with $K_n\rightarrow\infty$ as $n\rightarrow \infty$, then  $\hat \sigma^2 \rightarrow \sigma^2$ in probability; (ii) if $K_n=o(\sqrt{n})$ with $K_n\rightarrow\infty$ as $n\rightarrow \infty$, then $\sqrt{n}( \hat\sigma^2 - \sigma^2)$ is asymptotically normal with mean zero and variance $2\sigma^4$; and (iii) if $K_n$ is of order $n^{1 \over 2r+1}$, then $ \big | \mathbb E( \hat\sigma^2 - \sigma^2) \big |$ is of order $n^{-2 r \over 2r+1}  $.
\end{proof}



\end{document}